\theoremstyle{plain}
\numberwithin{equation}{section}
\theoremstyle{plain}
\newtheorem{cor}[equation]{Corollary}
\newtheorem{lemma}[equation]{Lemma}
\newtheorem{proposition}[equation]{Proposition}
\newtheorem{thm}[equation]{Theorem}
\theoremstyle{definition}
\newcommand{\C}{\operatorname{C} }
\newcommand{\Z}{\operatorname{Z} }
\newcommand{\gen}[1]{\left < #1 \right >}
\begin{document}

\title[Finite $p$-groups of nilpotency class $3$ with two conjugacy class sizes]
{Finite $p$-groups of nilpotency class $3$ with two conjugacy class sizes}
\author{Tushar Kanta Naik}
\address{Harish-Chandra Research Institute, HBNI \\
         Chhatnag Road, Jhunsi,
          Allahabad-211 019 \\
                India}
\email{mathematics67@gmail.com {\rm and } tusharkanta@hri.res.in}

\author{Rahul Dattatraya Kitture}
\address{Harish-Chandra Research Institute, HBNI \\
         Chhatnag Road, Jhunsi,
          Allahabad-211 019 \\
                India}
\email{rahul.kitture@gmail.com {\rm and } rahuldattatraya@hri.res.in}

\author{Manoj K. Yadav}
\address{Harish-Chandra Research Institute, HBNI \\
         Chhatnag Road, Jhunsi,
          Allahabad-211 019 \\
                India}
\email{myadav@hri.res.in}

\subjclass[2010]{20D15, 20E45}
\keywords{Finite p-group, conjugate type, isoclinism}

\begin{abstract}
It is proved  that, for a  prime $p>2$ and an integer $n \ge 1$, finite
$p$-groups of nilpotency class $3$ and having only two conjugacy class sizes $1$
and $p^n$ exist if and only if $n$ is even; moreover, for a given even positive
integer, such a group  is unique up to isoclinism (in the sense of Philip Hall).
\end{abstract}
\maketitle

\section{Introduction}
A finite group $G$ is said to be of {\it conjugate type} $(1=m_1,m_2,\cdots,
m_r)$, $m_i<m_{i+1}$, if $m_i$'s are precisely the different sizes of conjugacy
classes of $G$. In this paper we restrict our attention on finite groups of
conjugate type $(1, m)$. Investigation on such groups  was initiated by N.~Ito
\cite{Ito53} in 1953.
He proved that groups of conjugate type $(1,m)$ are nilpotent,
with $m$ a prime power, say $p^n$. In particular, $G$ is a direct product of its
Sylow-$p$ subgroup and some abelian $p'$-subgroup. So, it is sufficient to study
finite  $p$-{\it groups} of conjugate type $(1, p^n)$ for $p$ a prime and $n \ge
1$ an integer. It was proved by K. Ishikawa \cite{Ishikawa2002} that the
nilpotency class of such
finite $p$-groups is either $2$ or $3$.

It follows (see Section 2) that any two isoclinic groups are of same conjugate
type. The study of finite $p$-groups of conjugate type $(1, p^n)$, up to
isoclinism,  was initiated by  Ishikawa \cite{Ishikawa1999}. He classified
such groups for $n \le 2$. As a consequence, it follows that there is no finite
$p$-group of nilpotency class $3$ and conjugate type $(1, p)$, and there is a
unique finite $p$-group, up to isoclinism, of nilpotency class $3$ and conjugate
type $(1,
p^2)$.
The classification, up to isoclinism, of finite $p$-groups of conjugate type
$(1, p^3)$ was recently done in \cite{NY16}, wherein, among other results, it
was observed that there is no finite $p$-group of nilpotency class $3$ and
of conjugate type $(1, p^3)$.

The examples of $p$-group of nilpotency class $3$ and of conjugate type
$(1,p^{2m})$ are known  for all $m \ge 1$. These examples  appeared in the
construction of certain Camina $p$-groups of nilpotency class $3$ by Dark and
Scoppola \cite[p. 796-797]{Dark-Scoppola}. It can be shown that for a  given
integer $m \ge 1$ and a prime $p>2$, the
$p$-group of conjugate type $(1,p^{2m})$ and class $3$, constructed by Dark and
Scoppola, is isomorphic to $\mathcal{H}_m/\Z(\mathcal{H}_m)$, where
$\mathcal{H}_m$ is presented as follows (see Section
\ref{example} for more details):

\begin{equation}\label{unique-grp}
\mathcal{H}_m=
\begin{Bmatrix}
\begin{bmatrix}
1 & 0    & 0 & 0 & 0 \\
a & 1    & 0 & 0 & 0 \\
c & b    & 1 & 0 & 0 \\
d & ab-c & a & 1 & 0 \\
f & e    & c & b & 1 \\
\end{bmatrix}: a,b,c,d,e,f\in\mathbb{F}_{p^m}\end{Bmatrix}.
\end{equation}

In view of these examples, the question asked in  \cite{NY16} reduces to the
following:
Does there exist a finite $p$-group of nilpotency class $3$ and conjugate type
$(1, p^n)$, for an odd prime $p$ and odd integer $n \geq 5$?

We answer this question, by proving the following much general result.
\vspace{3mm}

\noindent{\bf Main Theorem.}
{\it Let $p>2$ be a  prime and $n \ge 1$ an integer. Then there exist
finite
$p$-groups of nilpotency class
$3$ and conjugate type
$(1, p^n)$  if and only if $n$ is even. For each
positive even integer $n = 2m$, every finite  $p$-group  of  nilpotency class
$3$ and of
conjugate type
$(1,p^n)$ is isoclinic to the group
$\mathcal{H}_m/\Z(\mathcal{H}_m)$, where $\mathcal{H}_m$ is as  in
\eqref{unique-grp}.}

\vspace{2mm}

We set some notations for a multiplicatively written finite group $G$ which
are mostly standard. We denote the commutator subgroup of $G$ by
$G'$ and  the center of $G$ by $\Z(G)$. The third term of the lower central
series of $G$ is denoted by $\gamma_3(G)$. To say that $H$ is a
subgroup  of $G$, we write  $H \leq G$.  For the elements $x, y, z \in G$, the
commutator  $[x, y]$ of $x$
and $y$ is defined by  $x^{-1}y^{-1}xy$, and $[x,y,z]$ = $[[x,y],z]$. For an
element $x \in G$, $x^G$ denotes the conjugacy class of $x$ in $G$ and $[G, x]$
denotes the set $\{[g,x] \mid g \in G\}$. Note that if $[G,x]  \subseteq \Z(G)$,
then  $[G, x]$ is a subgroup of $G$. For a subgroup $H$ of $G$ and an element $x
\in G$, by $\C_H(x)$ we denote the centralizer of $x $ in $H$.
The exponent of $G$ is denoted by $exp(G)$.
If $N$ is a normal subgroup of $G$, then the fact that $xy^{-1} \in N$ will be
denoted by  $x \equiv y \pmod{N}$.
By $\mathbb{F}_p$ we denote the field of {\it integers modulo $p$}.

\section{Reductions}

In $1940$, P.~Hall~\cite{Hall40} introduced the concept of  isoclinism among
groups.
Let $X$ be a finite group and $\overline{X} = X/\Z(X)$. Then
commutation in $X$ gives a well defined map $a_{X} : \overline{X} \times
\overline{X} \mapsto X'$ such that $a_{X}(x\Z(X), y\Z(X)) = [x,y]$ for $(x,y)
\in
X \times X$. Two finite groups $G$ and $H$ are said to be  \emph{isoclinic}, if
there
exists an  isomorphism $\phi$ of the factor group $\overline G = G  / \Z(G)$
onto
$\overline{H} = H/\Z(H)$, and an isomorphism
$\theta$ of the subgroup $G'$
onto  $H'$ such that the
following diagram is commutative:
\[
 \begin{CD}
   \overline G \times \overline G  @>a_G>> G'\\
   @V{\phi\times\phi}VV        @VV{\theta}V\\
   \overline H \times \overline H @>a_H>> H'.
  \end{CD}
\]

Note that isoclinism is an equivalence relation among groups. Equivalence
classes under this relation are called
{\it isoclinism families}.
We recall the following two results of  Hall.

\begin{proposition}~\cite[p. 136]{Hall40}\label{prop1}
Let $G$ and $H$ be two isoclinic finite groups. Then $G$ and $H$ are of the same
conjugate type.
\end{proposition}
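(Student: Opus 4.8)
The plan rests on the elementary identity $|x^G| = |\{[x,g] : g \in G\}|$: since $x^g = g^{-1}xg = x\,[x,g]$, the conjugacy class $x^G$ is the translate $x\cdot\{[x,g]:g\in G\}$, so it has the same cardinality as the set of commutators $\{[x,g]:g\in G\}$. The point is that, by the very definition of $a_G$, this set is nothing but the image $\{a_G(\bar x,\bar g) : \bar g\in\overline G\}\subseteq G'$ of the partial commutator map $a_G(\bar x,-)$, where $\bar x = x\Z(G)$; in particular it depends only on the coset $\bar x$ and not on the representative $x$.

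Next, given an isoclinism between $G$ and $H$, that is, isomorphisms $\phi:\overline G\to\overline H$ and $\theta:G'\to H'$ with $a_H\circ(\phi\times\phi)=\theta\circ a_G$, I would fix $x\in G$, set $\bar y=\phi(\bar x)$, and pick any $y\in H$ in that coset. As $\bar g$ ranges over $\overline G$ the element $\phi(\bar g)$ ranges over all of $\overline H$, so commutativity of the isoclinism diagram yields
\[
\theta\big(\{a_G(\bar x,\bar g):\bar g\in\overline G\}\big)=\{a_H(\phi\bar x,\phi\bar g):\bar g\in\overline G\}=\{a_H(\bar y,\bar h):\bar h\in\overline H\}.
\]
The left-hand side is $\theta(\{[x,g]:g\in G\})$ and the right-hand side is $\{[y,h]:h\in H\}$; since $\theta$ is a bijection of $G'$ onto $H'$, its restriction to $\{[x,g]:g\in G\}$ is a bijection onto $\{[y,h]:h\in H\}$, and therefore $|x^G|=|y^H|$. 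In other words, every conjugacy class size occurring in $G$ also occurs in $H$.

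Finally, since isoclinism is symmetric (the pair $(\phi^{-1},\theta^{-1})$ witnesses that $H$ is isoclinic to $G$), the same argument run in the opposite direction shows the reverse inclusion, and hence $G$ and $H$ have exactly the same set of distinct conjugacy class sizes, i.e. the same conjugate type. I do not expect a genuine obstacle here: the only care needed is to verify that the relevant commutator set depends only on $\bar x$ (so that passing through $\overline G$ is legitimate), and to keep in mind that the definition of conjugate type records only the set of class sizes — in fact the argument above delivers slightly more, namely a size-preserving correspondence between the cosets of $\Z(G)$ and those of $\Z(H)$.
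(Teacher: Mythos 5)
Your proof is correct, and it is essentially the standard argument: the class size of $x$ equals the size of the image of $a_G(\bar x,-)$, which the isoclinism pair $(\phi,\theta)$ transports bijectively to the image of $a_H(\phi\bar x,-)$. The paper itself gives no proof, simply citing Hall, so your write-up fills in exactly the argument that citation stands for.
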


\begin{proposition}~\cite[p. 135]{Hall40}\label{prop2}
Let $G$ be a finite group. Then there exists a finite group $H$ in the
isoclinism family of $G$ such that $\Z(H) \leq H'$.
\end{proposition}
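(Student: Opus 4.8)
The plan is to build $H$ by repeatedly shrinking $G$ using two order-decreasing operations that stay within the isoclinism family. \emph{Operation 1:} if $K\leq G$ and $K\Z(G)=G$, then $K$ is isoclinic to $G$: a central element of $K$ centralizes both $K$ and $\Z(G)$, hence all of $K\Z(G)=G$, so $\Z(K)=K\cap\Z(G)$; also $K'=[K\Z(G),K\Z(G)]=G'$; and the isomorphism $K/\Z(K)\xrightarrow{\sim}G/\Z(G)$ induced by the inclusion, together with $\mathrm{id}_{G'}$, makes the isoclinism diagram commute. \emph{Operation 2:} if $D\leq\Z(G)$ with $D\cap G'=1$, then $G/D$ is isoclinic to $G$, since $\Z(G/D)=\Z(G)/D$ (if $[g,G]\subseteq D$ then $[g,G]\subseteq D\cap G'=1$), $(G/D)'=G'D/D\cong G'$, and the diagram again commutes.

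Next, among all finite groups isoclinic to $G$, pick one of least order, say $H$; this is possible because every member $X$ of the family satisfies $|X|\geq|X/\Z(X)|=|G/\Z(G)|$. I claim $\Z(H)\leq H'$, which is the assertion. If $\Z(H)\not\leq\Phi(H)$, then some maximal subgroup $M$ of $H$ fails to contain $\Z(H)$, so $M\Z(H)=H$ while $M\neq H$, and Operation 1 yields a proper isoclinic subgroup, contradicting minimality; hence $\Z(H)\leq\Phi(H)$. If some nontrivial $D\leq\Z(H)$ satisfies $D\cap H'=1$, then Operation 2 yields the proper isoclinic quotient $H/D$, again a contradiction; hence every nontrivial subgroup of $\Z(H)$ meets $H'$ nontrivially, and in particular every element of prime order in $\Z(H)$ lies in $H'$.

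It remains to deduce $\Z(H)\leq H'$ from these two facts. Suppose not, and choose $z\in\Z(H)\setminus H'$ of least order. Minimality of $|z|$ forces $z^{q}\in H'$ for every prime $q$ dividing $|z|$; thus $|z|$ is not prime (such a $z$ would lie in $H'$ by the previous paragraph), and $|z|$ cannot have two distinct prime divisors $q_{1},q_{2}$ (for suitable integers $a,b$ we would have $aq_{1}+bq_{2}=1$, hence $z=(z^{q_{1}})^{a}(z^{q_{2}})^{b}\in H'$); so $|z|=q^{e}$ with $e\geq2$, and $zH'$ has order $q$ in $H/H'$. Since $z\in\Phi(H)$, the coset $zH'$ lies in $\Phi(H/H')$, hence is a $q$th power: $zH'=(wH')^{q}$ for some $w\in H$. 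The remaining step --- and this is the main obstacle --- is to extract a contradiction from the pair $(z,w)$. Operations 1 and 2 used separately are powerless here: no maximal subgroup of $H$ avoids $z$, and passing to $H/\langle z\rangle$ destroys part of $H'$ because $\langle z\rangle\cap H'\neq1$. One must instead combine a passage to a subgroup with a passage to a quotient, using $w$ to absorb the central ``excess'' recorded by $z$ and thereby produce a group of order $|H|/q$ in the family; carrying this out is the delicate part of Hall's original argument. Granting it, minimality of $H$ is contradicted, so $\Z(H)\leq H'$, and $H$ is the required group.
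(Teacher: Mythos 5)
Your preparatory work is fine: Operation 1 and Operation 2 are genuine isoclinisms and are verified correctly, a group $H$ of minimal order in the family exists, and your reduction shows that a minimal $H$ with $\Z(H)\not\leq H'$ would have to satisfy $\Z(H)\leq\Phi(H)$ and have every nontrivial subgroup of $\Z(H)$ meet $H'$. But the step you explicitly leave unproved ("Granting it\ldots") is not a loose end -- it is the entire content of the proposition. The residual configuration really occurs, so it cannot be argued away: for the modular group $M=\langle a,b\mid a^8=b^2=1,\ b^{-1}ab=a^5\rangle$ of order $16$ one has $\Z(M)=\langle a^2\rangle\cong\mathbb{Z}/4$, $M'=\langle a^4\rangle$ of order $2$, and $\Phi(M)=\langle a^2\rangle$, so no proper subgroup $K$ satisfies $K\Z(M)=M$ and no nontrivial central subgroup avoids $M'$; nevertheless $M$ is isoclinic to the strictly smaller group $D_8$ (same central quotient $\mathbb{Z}/2\times\mathbb{Z}/2$ with the same commutator pairing), and $D_8$ arises neither as a subgroup of $M$ of the type in Operation 1 nor as a quotient of the type in Operation 2. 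Hence minimality of $H$ cannot be contradicted by your two moves alone, and the missing "combined" construction -- for instance, first enlarge $H$ to a central product $H\circ A$ with a finite abelian group $A$ containing $\Z(H)$ chosen so that $\Z(H)\cap H'$ acquires a complement in the new (larger) center, which is an isoclinism, and then apply Operation 2 to that complement -- is exactly what has to be supplied. Your closing manipulations with the element $z$ of order $q^e$ and the $q$th root $w$ are correct as far as they go, but they produce no smaller group and no contradiction.

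For comparison: the paper does not prove this statement at all; it is quoted from Hall's 1940 paper, so there is no argument in the text to measure yours against. As written, your proposal establishes only that a minimal-order member of the family has its center inside the Frattini subgroup and has socle of the center inside the derived subgroup, not that $\Z(H)\leq H'$.
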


The following interesting result is due to M. Isaacs.

\begin{thm} \cite[p. 501]{Isaacs1970}\label{prop5}
Let $G$ be a finite group, which contains a proper normal subgroup $N$ such that
all of the conjugacy classes of $G$ which lie outside of $N$ have the same
sizes. Then either $G/N$ is cyclic or every non-identity element of $G/N$ has
prime order.
\end{thm}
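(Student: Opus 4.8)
The statement concerns only the conjugacy class sizes of $G$ relative to $N$, so the proof should be a centralizer‑and‑counting argument having nothing to do with $p$‑groups or nilpotency class $3$. Write $m$ for the common size of the conjugacy classes of $G$ lying outside $N$, so that $|\C_G(a)| = |G|/m =: c$ is the same for every $a \in G \setminus N$, and set $\overline{G} = G/N$. We may assume $m > 1$: if $m = 1$ then every element of $G \setminus N$ is central, so $G = N \cup \Z(G)$, and since no finite group is the union of two proper subgroups and $N \ne G$, this forces $G = \Z(G)$ to be abelian — a degenerate case handled on its own. The plan is then to argue by contradiction: suppose $\overline{G}$ is not cyclic and that $\overline{G}$ has an element $\bar x$ whose order $n$ is neither $1$ nor a prime, and derive a contradiction; this is exactly what is needed, since ruling out this configuration leaves precisely the dichotomy ``$\overline{G}$ cyclic, or every non-identity element of $\overline{G}$ has prime order''.

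The elementary engine is the observation that for $a \in G \setminus N$ and any integer $k$ with $a^k \notin N$ one has $\C_G(a) = \C_G(a^k)$, because $\C_G(a) \leq \C_G(a^k)$ always and the two subgroups have the same order $c$. Fixing a lift $x \in G$ of $\bar x$, this gives $\C_G(x^k) = \C_G(x) =: C$ for every $k$ with $n \nmid k$; in particular, since $n$ is composite we may choose a prime $p \mid n$ with $p < n$, and then $x^p \in G \setminus N$ with $\C_G(x^p) = C$, which is a genuine restriction (normally $x^p$ would be \emph{more} central than $x$). One also records the arithmetic coming out of partitioning the $|G| - |N| = |N|\bigl(|\overline{G}| - 1\bigr)$ elements outside $N$ into classes of size $m$: since the number of such classes is an integer one gets $|\overline{G}| \mid c$ and hence $m \mid |N|$, which couples the possible orders of elements of $\overline{G}$ to centralizer sizes.

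The crux — and the step I expect to be the main obstacle — is to upgrade this local control over a single cyclic subgroup $\gen{\bar x}$ to a statement about all of $\overline{G}$. The approach I would pursue is to manufacture an element $z \in G \setminus N$ with $C = \C_G(x) \subsetneq \C_G(z)$ (or the reverse inclusion), contradicting $|\C_G(z)| = |\C_G(x)| = c$. A non-cyclic $\overline{G}$ supplies an element $\bar y \notin \gen{\bar x}$, and the natural move is to play the non-generator $x^p$ of $\gen{\bar x}$ off against a lift of $\bar y$: one tracks the centralizers of the products $x^i y^j$ (whose images in $\overline{G}$ are non-trivial, hence have centralizer of order $c$, whenever they avoid $\gen{\bar x}$), using the nontrivial inclusion $\C_G(x^p) \subseteq \C_G(x)$ to force an unexpected commuting relation and thereby an honest inclusion of centralizers. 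Making this product-tracking produce a strict inclusion without circularity is the delicate part; if a direct attack resists, the fallback is an induction on $|G:N|$ — checking that the hypothesis is inherited by the relevant sections containing $N$ — or a global double count of $G \setminus N$ organized by the (constantly sized) centralizers, which should pin down the structure of $\overline{G}$.
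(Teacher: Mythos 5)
Your preliminaries are correct but they are the routine part of the theorem, and the argument stops exactly where the real content begins. The power--centralizer identity ($\C_G(a)=\C_G(a^k)$ when $a,a^k\notin N$) and the divisibility facts $[G:N]\mid c$, $m\mid |N|$ are fine, but they only constrain the single cyclic piece $\gen{\bar x}$; to exploit non-cyclicity you must somehow involve an element $\bar y\notin\gen{\bar x}$, and for that you need commuting relations between lifts which the hypothesis does not hand you. Producing that ``unexpected commuting relation'' and the resulting strict inclusion of centralizers is precisely the substance of Isaacs' theorem, and in your write-up it appears only as an intention (``the delicate part''), with two undeveloped fallbacks. Neither fallback is obviously viable as stated: an induction on $|G:N|$ runs into the problem that the equal-class-size hypothesis is about classes of $G$ and does not pass in any evident way to proper subgroups or sections containing $N$, and ``a global double count \dots should pin down the structure'' is not an argument. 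Since the paper proves nothing here (it simply cites Isaacs), the burden of this central step is entirely on you, and it is missing; what you have is a plan, not a proof.

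A smaller but real point: the case $m=1$ that you set aside is not a harmless degenerate case. If $m=1$ then, as you note, $G$ is abelian, and then the statement as literally written can fail: take $G=C_4\times C_2$ and $N=1$, so all classes outside $N$ have the same size $1$, yet $G/N$ is neither cyclic nor has all non-identity elements of prime order. So this case cannot be ``handled on its own''; the theorem is really about the situation $m>1$ (equivalently $G$ non-abelian), which is the only case the paper uses (there $m=p^n$ and $N=\Z(G)$), and a correct write-up must make that hypothesis explicit rather than defer it.
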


As an immediate consequence of the preceding result, we get

\begin{cor}\label{cor1}
For a prime $p$ and an integer $n\ge 1$, let
$G$ be a finite $p$-group of conjugate type $(1, p^n)$. Then $exp(G/\Z(G)) =
p$. In particular, if $G$ is of nilpotency class $>2$, then $p>2$.
\end{cor}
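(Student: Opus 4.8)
The plan is to derive Corollary \ref{cor1} directly from Theorem \ref{prop5} by taking $N = \Z(G)$. First I would observe that $\Z(G)$ is a proper normal subgroup of $G$, since $G$ is a non-abelian $p$-group (indeed, $G$ has a non-central conjugacy class of size $p^n$ with $n \ge 1$). Every conjugacy class of $G$ that lies outside $\Z(G)$ is, by definition of $\Z(G)$, a non-central class, hence has size $p^n$ by the conjugate type hypothesis; so all such classes have the same size, and the hypothesis of Theorem \ref{prop5} is satisfied with this choice of $N$. Theorem \ref{prop5} then forces $G/\Z(G)$ to be either cyclic or a group in which every non-identity element has prime order.

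Next I would rule out the cyclic case: if $G/\Z(G)$ were cyclic, then $G$ would be abelian, contradicting $n \ge 1$. Therefore every non-identity element of $G/\Z(G)$ has prime order; since $G/\Z(G)$ is a $p$-group, that prime must be $p$, and hence $\exp(G/\Z(G)) = p$. This proves the first assertion.

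For the ``in particular'' statement, suppose $G$ has nilpotency class greater than $2$, so $\gamma_3(G) \ne 1$. If $p = 2$, then $\exp(G/\Z(G)) = 2$ means $G/\Z(G)$ has exponent $2$, hence is abelian, so $G' \le \Z(G)$ and $G$ has nilpotency class at most $2$ --- a contradiction. Therefore $p > 2$. The only mild subtlety is confirming that $\Z(G)$ is genuinely proper so that Theorem \ref{prop5} applies; this is immediate from the existence of a class of size $p^n > 1$. I do not anticipate any real obstacle here: the corollary is a routine specialization of Isaacs' theorem together with the elementary fact that a group with central cyclic (resp.\ exponent-$p$ for $p=2$) quotient is abelian.
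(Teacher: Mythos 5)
Your proof is correct and is precisely the argument the paper intends: apply Isaacs' theorem with $N=\Z(G)$, rule out the cyclic quotient, and use that an exponent-$2$ group is abelian for the ``in particular'' claim. The paper states the corollary as an immediate consequence of Theorem \ref{prop5} without writing out these details, so your write-up simply makes the same routine specialization explicit.
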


The following result is due to Ishikawa.
\begin{thm}\cite[Main Theorem]{Ishikawa2002}\label{ishikawa2}
For a prime $p$ and an integer $n\ge 1$,
let $G$ be a finite $p$-group of
conjugate type $(1, p^n)$. Then the nilpotency class of $G$ is at most $3$. As a
consequence, $G'$ is elementary abelian.
\end{thm}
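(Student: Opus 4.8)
\emph{Overview and reduction.} The theorem makes two assertions -- nilpotency class at most $3$, and $G'$ elementary abelian -- and I would derive the second from the first, so the substance is the class bound. By Corollary~\ref{cor1}, $G/\Z(G)$ has exponent $p$, and if $G$ had class greater than $2$ then $p$ would be odd; thus for $p=2$ there is nothing to prove. So assume $p$ is odd and suppose, for a contradiction, that $G$ has class $c \ge 4$.

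\emph{Assembling the set-up.} I would first record three facts. (a) For a $p$-group of class $c$ one has $\gamma_i(G) \le \Z_{c-i+1}(G)$; in particular $\gamma_c(G) \le \Z(G)$ and $\gamma_{c-1}(G) \le \Z_2(G)$, while $\gamma_i(G) \not\le \Z(G)$ for every $i \le c-1$ (else the class would drop). (b) If $x$ is non-central and $x \in \gamma_k(G)$, then $x^G$ lies in the coset $x\,\gamma_{k+1}(G)$, so $|\gamma_j(G)| \ge |x^G| = p^n$ for all $2 \le j \le c$; since moreover $\gamma_2(G) > \gamma_3(G) > \cdots > \gamma_c(G)$ is strictly decreasing with $|\gamma_c(G)| \ge p^n$, this forces $|G'| \ge p^{\,n+c-2}$. (c) For $x \in \Z_2(G) \setminus \Z(G)$ -- a set that, since $c \ge 4$, contains every non-central element of $\gamma_{c-1}(G)$ -- the map $g \mapsto [g,x]$ is a homomorphism of $G$ into $\Z(G)$ with kernel $\C_G(x)$, so $[G,x]$ is a subgroup of $\Z(G)$ isomorphic to $G/\C_G(x)$; being a quotient of $G/\Z(G)$, it is elementary abelian of order exactly $p^n$.

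\emph{The crux, and the expected obstacle.} It remains to extract a contradiction, and the essential point is to use the hypothesis in its \emph{sharp} form -- not merely that non-central classes are bounded by $p^n$, but that they all have size exactly $p^n$. Fixing a non-central $x \in \gamma_{c-1}(G)$ together with the elementary abelian subgroup $A := [G,x] \le \Z(G)$ of order $p^n$ from~(c), I would compare the indices $|G:\C_G(y)|$, $|G:\C_G(x)\cap\C_G(y)|$ and $|G:\C_G(xy)|$ -- each forced to be $1$ or $p^n$ -- as $y$ runs over $\gamma_3(G)$ and over $G$, in conjunction with a breadth bound of the shape $|G'| \le p^{\,n(n+1)/2}$, to rule out $c \ge 4$: either some element acquires a class size strictly between $1$ and $p^n$, or $|G'|$ (equivalently some $\gamma_j(G)$) is forced too large. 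This coset-counting is the technical heart of the argument, and it is the step I expect to be hardest; one could instead pass to the graded Lie ring $L = \bigoplus_i \gamma_i(G)/\gamma_{i+1}(G)$, on which the two-class condition becomes a uniform-rank statement for the adjoint maps $\operatorname{ad}\bar{x}$, and argue there.

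\emph{From class $\le 3$ to $G'$ elementary abelian.} Once the class is at most $3$, $\gamma_4(G) = 1$, so $[G',G'] \le \gamma_4(G) = 1$ and $G'$ is abelian. Furthermore $g^p \in \Z(G)$ for all $g$ (Corollary~\ref{cor1}), whence $[u,v,w]^p = [u,v,w^p] = 1$ for all $u,v,w$ and $\gamma_3(G)$ has exponent $p$; the collection identity $[x,y]^p = [x^p,y]\,[x,y,x]^{-\binom{p}{2}}$ in a class-$3$ group then forces $[x,y]^p = 1$ (the first factor is trivial since $x^p \in \Z(G)$, the second since $p \mid \binom{p}{2}$ for odd $p$ and $\gamma_3(G)$ has exponent $p$), and for $p = 2$ the class is already $\le 2$ and $[x,y]^2 = [x^2,y] = 1$ directly. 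Hence $G'$ is an abelian group generated by elements of order dividing $p$, that is, elementary abelian.
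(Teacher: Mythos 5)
The result you are asked to prove is quoted in the paper from Ishikawa's 2002 article; the paper offers no proof of its own, so the only question is whether your argument stands on its own. It does not: the heart of the theorem, namely that a $p$-group of conjugate type $(1,p^n)$ cannot have nilpotency class $c\ge 4$, is precisely the step you leave as a plan. Your set-up (the facts (a)--(c): strictness of the lower central series giving $|G'|\ge p^{\,n+c-2}$, and $[G,x]$ being an elementary abelian subgroup of $\Z(G)$ of order exactly $p^n$ for $x\in \Z_2(G)\setminus\Z(G)$) is correct, but it is only preparation. The paragraph labelled ``the crux'' says that you \emph{would} compare the indices $|G:\C_G(y)|$, $|G:\C_G(x)\cap\C_G(y)|$, $|G:\C_G(xy)|$ and invoke a breadth bound $|G'|\le p^{\,n(n+1)/2}$ (itself a nontrivial external theorem you neither prove nor cite precisely) to force either an intermediate class size or an oversized $\gamma_j(G)$; no such comparison is actually carried out, and there is no indication of how the dichotomy would be reached. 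The alternative suggestion of passing to the graded Lie ring is likewise only a pointer. Since this missing step \emph{is} Ishikawa's theorem, the proposal as written does not prove the statement; it reduces it to itself.

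The second half of your argument is fine: once the class is at most $3$, $G'$ is abelian, $\gamma_3(G)\le\Z(G)$ together with $\exp(G/\Z(G))=p$ gives $[u,v,w]^p=[u,v,w^p]=1$, and the class-$3$ identity $[x^p,y]=[x,y]^p[x,y,x]^{\binom{p}{2}}$ with $x^p\in\Z(G)$ and $p\mid\binom{p}{2}$ for odd $p$ (and the direct computation for $p=2$, where the class is already $2$) yields $[x,y]^p=1$, so $G'$ is elementary abelian. So the deduction ``class $\le 3$ implies $G'$ elementary abelian'' is complete and correct; to make the whole statement stand you must either supply the omitted counting argument in full or do what the paper does and cite Ishikawa's Main Theorem for the class bound.
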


The preceding results reduce our study to the finite $p$-groups $G$ satisfying
the following conditions:
\begin{enumerate}
\item $G$ is of conjugate type $(1, p^n)$, $n \ge 2$.
\item Nilpotency class of $G$ is $3$.
\item  $\Z(G) \le G'$.
\item $p>2$.
\end{enumerate}

 For notational convenience, we set

\noindent {\bf Hypothesis (A1).} We say that a finite $p$-group $G$ satisfies
\emph{Hypothesis
(A1)}, if (1)-(4) above hold for $G$.

\section{Key results}

 In this section we determine some  important invariants associated to a finite
$p$-group satisfying Hypothesis (A1).

\begin{lemma}\label{lem2}
Let $G$ satisfy Hypothesis (A1). Then $[G':\Z(G)]<p^n$.
\end{lemma}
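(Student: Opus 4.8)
The plan is to compare two numerical invariants of $G$: the size of a conjugacy class (which is $p^n$ for every non-central element) and the index $[G':\Z(G)]$. The key observation is that since $G$ has class $3$, the subgroup $\gamma_3(G)$ is nontrivial and, being generated by commutators $[x,y,z]$, is contained in $\Z(G)$ (because $G/\gamma_3(G)$ has class $2$ and... more precisely, $[\gamma_3(G),G] = \gamma_4(G) = 1$, so $\gamma_3(G) \le \Z(G)$). On the other hand $\gamma_3(G) \le G'$, so $\gamma_3(G)$ sits inside $G' \cap \Z(G) = \Z(G)$ (using Hypothesis (A1)(3), $\Z(G) \le G'$). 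Thus $\Z(G)$ genuinely captures $\gamma_3(G)$, and $G'/\Z(G)$ is a quotient of $G'/\gamma_3(G)$, which is central in $G/\gamma_3(G)$.

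First I would fix a non-central element $x \in G$ and consider the map $G \to G'$ given by $g \mapsto [g,x]$. Its image is $[G,x]$ and its fibers are cosets of $\C_G(x)$, so $|[G,x]| = [G : \C_G(x)] = |x^G| = p^n$. Next I would show that $[G,x]$ lands in a proper subgroup related to $\Z(G)$: since $x$ is non-central, pick $y$ with $[x,y] \notin \Z(G)$; but then I want to produce elements of $G'$ not hit by $g \mapsto [g,x]$. The cleanest route: because $G/\Z(G)$ has exponent $p$ (Corollary \ref{cor1}) and class $2$, the map $g\Z(G) \mapsto [g,x]\Z(G)$ is a homomorphism from $G/\Z(G)$ (or from $\C_G(x)\backslash$-complement) into $G'/\Z(G)$, whose image is $[G,x]\Z(G)/\Z(G)$. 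The size of this image is $[G,x]\Z(G)/\Z(G)$, which is at most $[G':\Z(G)]$ but could a priori equal it; the gain comes from the fact that $[G,x] \cap \Z(G) \neq 1$ — indeed $[G,x]$ contains $[[G,x],\text{something}]$-type elements, or more directly $[\C_G(x'),x] $ type terms lying in $\gamma_3(G) \le \Z(G)$ for suitable $x'$ — so the projection $[G,x] \to G'/\Z(G)$ has nontrivial kernel, giving $|[G,x]\Z(G)/\Z(G)| < |[G,x]| = p^n$, and hence $[G':\Z(G)] \ge |[G,x]\Z(G)/\Z(G)|$ is not by itself enough; I must instead argue $[G':\Z(G)] \le$ something strictly below $p^n$.

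The right comparison is therefore: $[G' : \Z(G)] = [G':\gamma_3(G)]/[\Z(G):\gamma_3(G)]$, and $G'/\gamma_3(G)$ is central in the class-$2$ group $\bar G = G/\gamma_3(G)$. In $\bar G$, every non-central element still has class size dividing $p^n$ (by Proposition \ref{prop1}-type stability under quotients, or directly), and for a class-$2$ $p$-group of conjugate type $(1,p^k)$ one has the standard fact $[\bar G' : 1] = [\bar G':\Z(\bar G)] \cdot |\Z(\bar G)|$ with $[\bar G:\Z(\bar G)]$ controlling things; the key inequality $|G'/\gamma_3(G)| \le p^n$ should drop out because $G'/\gamma_3(G) \cong [G,x]\gamma_3(G)/\gamma_3(G)$ can be realized inside a single class-size worth of commutators once one picks $x$ outside the appropriate subgroup. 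Then to get the \emph{strict} inequality $[G':\Z(G)] < p^n$ I would use that $\gamma_3(G) \lneq \Z(G)$ is impossible to avoid having positive codimension: $\Z(G)/\gamma_3(G)$ is nontrivial because $G/\gamma_3(G)$ has class exactly $2$ and hence nontrivial center strictly containing the image of $\gamma_3(G)$ is automatic — actually $\Z(G/\gamma_3(G)) \supseteq \Z(G)/\gamma_3(G)$ and also $\supseteq$ more, but what I need is just $\Z(G) \supsetneq \gamma_3(G)$, which holds since $G$ has class $3 > 2$ means $\gamma_3(G) \ne 1$ but also $G' \not\le \Z(G)$, and one shows $Z_2(G)/\gamma_3(G)$ meets $G'/\gamma_3(G)$ nontrivially above $\gamma_3$. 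Combining $|G'/\gamma_3(G)| \le p^n$ with $|\Z(G)/\gamma_3(G)| \ge p$ yields $[G':\Z(G)] \le p^{n}/p < p^n$.

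The main obstacle I anticipate is pinning down the inequality $|G'/\gamma_3(G)| \le p^n$ rigorously: one must argue that the quotient $G/\gamma_3(G)$ — a class-$2$ group whose non-central classes have size dividing $p^n$ — satisfies $|(G/\gamma_3(G))'| \le p^n$, for which the cleanest tool is Knoche's theorem / the class-$2$ analysis (a class-$2$ $p$-group of conjugate type $(1,p^k)$ has derived subgroup of order dividing... no, rather: for any non-central $\bar x$, $[\bar G,\bar x]$ has order $p^{k}$ and equals $\bar G'$ when $\bar G' \le \Z(\bar G)$ is "generated by one element's worth" — this requires $\bar G' \le \langle [\bar G,\bar x]\rangle$ for a single $\bar x$, which may fail and so one may need $\bar G' \le \prod [\bar G, \bar x_i]$ over generators, losing the clean bound). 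I would handle this by instead observing directly in $G$: $G'$ is generated by $\{[x,y] : x,y\}$ modulo $\gamma_3(G)$ by the commutators $[g,x]$ for $g$ ranging over $G$ and $x$ fixed non-central — using the bilinearity mod $\gamma_3(G)$ and exponent-$p$ of $G/\Z(G)$ to see $[G,x]\gamma_3(G)/\gamma_3(G)$ together with $[G,y]\gamma_3(G)/\gamma_3(G)$ generate, but each has size $\le p^n$... so the honest statement to prove and use is likely the sharper $[G':\gamma_3(G)] \le p^n$ coming from the structure of $\C_G(x)$, namely that $\C_G(x) \supseteq Z_2(G)$ fails but $\C_G(x)$ is "large". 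I will present the argument via: fix non-central $x$; then $G' \le \C_G([G,x])$ and the map $g \mapsto [g,x]$ descends to an injection $G/\C_G(x) \hookrightarrow G'$ whose image together with $\gamma_3(G)$ exhausts $G'$, giving $[G':\gamma_3(G)] \le [G,x]\gamma_3(G)/\gamma_3(G) \le |[G,x]| = p^n$, and then the strictness from $\gamma_3(G) \subsetneq \Z(G)$.
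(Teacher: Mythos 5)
Your proposal has two genuine gaps, and one of them is not repairable because the fact you rely on is false for exactly the groups under consideration.

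First, your route to strictness hinges on $\gamma_3(G)\lneq \Z(G)$, i.e.\ $[\Z(G):\gamma_3(G)]\ge p$, so that $[G':\Z(G)]\le p^n/p$. But for groups satisfying Hypothesis (A1) one in fact has $\Z(G)=\gamma_3(G)$: this is proved later in the paper (Lemma \ref{lem9}) and is visible already in the explicit example of Section \ref{example}, where $\gamma_3(\mathcal{G})=\Z(\mathcal{G})$. Your justification for the strict containment (``$Z_2(G)/\gamma_3(G)$ meets $G'/\gamma_3(G)$ nontrivially above $\gamma_3$'') does not yield $\Z(G)\supsetneq\gamma_3(G)$, and no argument can, since the containment is an equality here. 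Consequently, even granting your other claims, you would only obtain $[G':\Z(G)]\le[G':\gamma_3(G)]\le p^n$, which is not the statement of the lemma.

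Second, the bound $[G':\gamma_3(G)]\le p^n$ itself rests on the assertion that $G'=[G,x]\,\gamma_3(G)$ for a single non-central $x$, i.e.\ that the image of the homomorphism $g\gamma_3(G)\mapsto[g,x]\gamma_3(G)$ exhausts $G'/\gamma_3(G)$. You acknowledge in the proposal that this ``may fail'', and indeed it is a Camina-type property that is essentially part of what the whole analysis is meant to establish; asserting it at this stage is circular. (It is true for the groups that ultimately survive, but only because $G/\gamma_3(G)\cong G/\Z(G)$ turns out to be a Camina group of class $2$, which is a consequence of the later structure theory, not an input available for Lemma \ref{lem2}.) The paper avoids both difficulties by a purely combinatorial double count: it counts pairs $\bigl(\langle xG'\rangle,\langle y\Z(G)\rangle\bigr)$ with $x\in G\setminus G'$, $y\in G'\setminus\Z(G)$ and $[x,y]=1$ in two ways, using $exp(G/\Z(G))=p$, the abelianness of $G'$, and the key observation that $[G':\C_{G'}(x)]\le p^{n-1}$ for $x\notin G'$ (otherwise $x^G=x^{G'}$, which forces $x\in\Z(G)$ via a modified generating set); assuming $[G':\Z(G)]\ge p^n$ then produces the numerical contradiction $p+p^{-k}+p^{n-m}\le 1+p^{1-m}+p^{n-k}<3$. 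If you want to salvage your approach, you would need an independent proof of the single-element generation $G'=[G,x]\gamma_3(G)$ together with a source of strictness that does not pass through $[\Z(G):\gamma_3(G)]\ge p$; as written, neither is available.
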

\begin{proof}
Let $[G:G']=p^m$ for some integer $m\geq 1$. Since the nilpotency class of $G$
is $3$,
$G'$ is abelian; hence by the given hypothesis, for any $y \in G' \setminus
\Z(G)$, we have
$$p^n = [G:\C_G(y)] \le [G:G']=p^m;$$
hence   $n \le m$.

We proceed by the way of contradiction.
Contrarily  suppose that $[G' : \Z(G)]=p^k$ with $n \le  k$.
 Our plan  is to count the
cardinality of following set in two different ways:
\[
X=\Big{\{}\big(\langle xG'\rangle, \langle y\Z(G)\rangle \big) \mid x \in G
\setminus G', \;y \in G'\setminus \Z(G), \;[x,y]=1\Big{\}}.
\]
Note that $X$ is well defined. For, if $\langle xG'\rangle=\langle x_1G'\rangle$
and $\langle
y\Z(G)\rangle=\langle y_1\Z(G)\rangle$,  then $[x,y]=1$ if and only if
$[x_1,y_1]=1$. Note that $G/\Z(G)$ is of exponent $p$ (by Corollary \ref{cor1})
and  $\Z(G)\le G'$. Thus
$$exp(G/G') = exp(G'/\Z(G))=p.$$
It follows that there are $(p^k-1)/(p-1)$ subgroups of order $p$
in
$G'/\Z(G)$. Since $G'$ is abelian, for each $y \in G' \setminus \Z(G)$,
$G'\subseteq C_G(y)$ and $[\C_G(y):G']=p^{m-n}$. As there are $(p^{m-n} -
1)/(p-1)$ subgroups of order $p$ in  $\C_G(y)/G'$, we get
$$|X|=\frac{(p^{m-n} - 1)(p^k-1)}{(p-1)^2}.$$

On the other hand, fix $x \in G \setminus G'$. Set  $\C_{G'}(x) := \C_G(x) \cap
G' $. If $[G':\C_{G'}(x)] \ge  p^n$, then there are at least $p^n$ conjugates of
$x$ in $G'$;   hence, by the given hypothesis, there are exactly $p^n$
conjugates of $x$
in $G'$. Consequently we have $x^G=x^{G'}$. Let $\{x_1,\ldots, x_m\}$ be
a minimal generating set of $G$. Then $x^{x_i} = x^{h_i}$ for some
$h_i\in G'$; hence $x^{x_ih_i^{-1}}=x$ for $1 \le i \le m$. Hence
$\{x_1h_1^{-1},\ldots,
x_mh_m^{-1}\}$ is also a generating set for $G$, which centralizes $x$, which
implies that $x \in \Z(G)$,
a contradiction. Thus  $[G':\C_{G'}(x)] \leq p^{n-1}$ and  $[\C_{G'}(x): \Z(G)]
\geq p^{k+1-n} \geq p$. Thus, there are at least $(p^{k+1-n}- 1)/(p-1)$
subgroups $\langle y\Z(G)\rangle$ of order $p$ in $G'/\Z(G)$ with $[x,y]=1$.
Counting all together, we  get
$$|X| \geq \frac{(p^m-1)(p^{k+1-n}- 1)}{(p-1)^2}.$$
Comparing the size of $X$, we get
$$(p^m - 1)(p^{k+1-n}- 1) \leq (p^{m-n} - 1)(p^k- 1),$$
which on simplification gives
$$p+p^{-k}+p^{n-m} \leq 1+p^{1-m}+p^{n-k}<3,$$
 a contradiction on the choice of $p$. Hence  $[G' : \Z(G)] < p^n$.
\end{proof}

Before proceeding further, we introduce the notion of breadth in $p$-groups. Let
$G$ be a finite $p$-group. For $x \in G$, the \emph{breadth $b(x)$} of $x$ in
$G$ is defined as $p^{b(x)} = [G : \C_G(x)]$. The \emph{breadth $b_G$} of G is
defined as
$$b_G= \mbox{max}\{b(x) \mid x \in G \}.$$

Let $A$ be an abelian normal subgroup of $G$. Then we define the following:
\begin{align*}
p^{b_A(x)} &= [A : C_A(x)].\\
b_A(G) &= \mbox{max}\{ b_A(x) \mid x \in G \}. \\
B_A(G) &= \{x \in G \mid b_A(x) = b_A(G)\}.
\end{align*}

For the ease of notation, we denote $B_A(G)$ by $B_A$.

\begin{lemma}\label{lem3}
Let $G$ satisfiy Hypothesis (A1). Then $$B_{G'}=\{x \in G \mid
\C_{G'}(x)=\Z(G)\}$$ and $\langle B_{G'}\rangle = G$.
\end{lemma}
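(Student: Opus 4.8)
The plan is to notice that both halves of the lemma follow from one statement, and then to establish that statement by a short covering-and-counting argument. Write $S=\{x\in G:\C_{G'}(x)=\Z(G)\}$. Since $\Z(G)\le G'$, the centre lies in $\C_{G'}(x)$ for \emph{every} $x\in G$, so $[G':\C_{G'}(x)]\le[G':\Z(G)]$, with equality exactly when $x\in S$. Hence, once we know $S\neq\emptyset$, we get $b_{G'}(G)=\log_p[G':\Z(G)]$, and then $B_{G'}=\{x:[G':\C_{G'}(x)]=[G':\Z(G)]\}=S$, the last equality again using $\Z(G)\le\C_{G'}(x)$ and comparing orders. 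So the lemma reduces to showing $S\neq\emptyset$ and $\langle S\rangle=G$; as $\langle\emptyset\rangle=1\neq G$, it suffices to prove $\langle S\rangle=G$.

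Suppose for contradiction that $\langle S\rangle\neq G$. Then $\langle S\rangle$ is contained in a maximal subgroup $M$ of $G$, and since $G$ is a $p$-group, $[G:M]=p$ and $G'\le\Phi(G)\le M$. In the quotient $G/G'$, each of the $p^{m-1}(p-1)$ cosets lying outside $M/G'$ (where $[G:G']=p^m$) has a representative $v\notin M$, hence $v\notin S$ (as $S\subseteq M$); so $\C_{G'}(v)\supsetneq\Z(G)$, i.e.\ $v$ centralizes some $y\in G'\setminus\Z(G)$. As the nilpotency class of $G$ is $3$, $G'$ is abelian, so $G'\le\C_G(y)$ and that coset lies in $\C_G(y)/G'$. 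Thus $(G/G')\setminus(M/G')$ is covered by the subgroups $\C_G(y)/G'$ of $G/G'$, with $y$ ranging over $G'\setminus\Z(G)$.

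Now I count. Put $[G':\Z(G)]=p^k$. For $y\notin\Z(G)$ the conjugate-type hypothesis gives $[G:\C_G(y)]=p^n$, so $n\le m$ and $|\C_G(y)/G'|=p^{m-n}$; and because $\C_G(y)$ depends only on the order-$p$ subgroup $\langle y\Z(G)\rangle$ of the elementary abelian group $G'/\Z(G)$ (elementary abelian by Theorem \ref{ishikawa2}), there are at most $(p^k-1)/(p-1)$ distinct such centralizers. Comparing cardinalities gives
\[
p^{m-1}(p-1)\ \le\ \frac{p^k-1}{p-1}\cdot p^{m-n},
\]
which simplifies to $p^{n-1}(p-1)^2<p^k$. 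But Lemma \ref{lem2} says $p^k=[G':\Z(G)]<p^n$, so $k\le n-1$ and $p^k\le p^{n-1}$; this forces $(p-1)^2<1$, impossible since $p>2$ (Corollary \ref{cor1}). Hence $\langle S\rangle=G$, in particular $S\neq\emptyset$, and the lemma follows as explained.

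I expect the one delicate point to be the bookkeeping in the first paragraph: recognizing that the two displayed equalities in the lemma collapse to the single claim that $S$ is nonempty and generates $G$, so that the whole proof rests on one inequality. Beyond that the argument is self-contained; the only quantitative input is the bound $[G':\Z(G)]<p^n$ from Lemma \ref{lem2} (together with $p>2$), and the only structural fact used is that $G'$ is abelian (so that $G'\le\C_G(y)$) — no commutator identities are needed.
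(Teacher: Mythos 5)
Your proof is correct and is essentially the paper's argument: both rest on covering the elements outside a proper subgroup by the centralizers $\C_G(y)$, $y\in G'\setminus\Z(G)$, of which there are at most $(p^k-1)/(p-1)$ each of index $p^n$, and then contradicting the bound $[G':\Z(G)]<p^n$ of Lemma \ref{lem2}. The only difference is organizational — you work modulo $G'$ and fold the nonemptiness of $S$ and the generation statement into a single inequality, whereas the paper first shows the union $T$ is proper and then separately rules out $\langle B_{G'}\rangle<G$.
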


\begin{proof}
Let $[G:G']=p^m$ for some integer $m$. As in the proof of Lemma \ref{lem2}, we
have  $n \le m$. Let $[G':\Z(G)]=p^k$. Then, by  Lemma \ref{lem2},  $k \le
n-1$.
Define
$$T:=\{x \in G \mid x \mbox{ commutes with some element } y \in G'\setminus
\Z(G)\}.$$
Note that $T= \cup \C_G(h)$, where union is taken over all subgroups  $\langle
hG'\rangle$ of order $p$ in $G'/\Z(G)$.
Since $G'$ is abelian, for each $h
\in G'\setminus \Z(G)$, $G' \le \C_G(h)$ and $|\C_G(h)|=|G'|\,p^{m-n}$.
By Corollary \ref{cor1}, $exp(G/\Z(G))=p$, and hence $exp(G'/\Z(G))=p$. Thus,
\begin{equation*}
|T| \,\leq\, |G'|\,p^{m-n}\frac{(p^k-1)}{p-1} \,<\, |G'|\,p^{m-n+k} \,\leq\,
|G'|\,p^{m-1} \,< \, |G|.
\end{equation*}
Consequently, $T$ is a proper subset of $G$, and therefore there exists an
element $x \in G \setminus G'$ such that $\C_{G'}(x)=\Z(G)$. Thus
$$B_{G'}=\{x\in G \mid \C_{G'}(x)=\Z(G)\} = G \setminus T.$$
If $\langle B_{G'}\rangle=H<G$, then $|H| \leq |G'|\,p^{m-1}$. Thus we have
\begin{align*}
|G| = |T \cup B_{G'}|\leq |G'|p^{m-1} + |G'|\,p^{m-1} < |G'|p^m=|G|,
\end{align*}
which is a contradiction. Hence  $\langle B_{G'} \rangle = G$, which completes
the proof.
\end{proof}

As a consequence of Hall-Witt identity, we get
\begin{lemma}\label{lem1}
Let $G$ be a group of nilpotency class $3$ and let $x,y,z \in $ $G$ be such
that $[x,z], [y,z]
\in $ $\Z(G)$.  Then $[x,y,z] = 1$, that is, $[x,y] \in G'  \cap \C_G(z)$.
\end{lemma}

The following  result computes the index of $G'$ in $G$.

\begin{lemma}\label{lem4}
If $G$ satisfy Hypothesis (A1), then $[G:G']=p^n$.
\end{lemma}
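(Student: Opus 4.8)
The plan is to argue by contradiction. Write $[G:G']=p^m$; the computation opening the proof of Lemma~\ref{lem2} already gives $m\ge n$, and Lemma~\ref{lem2} gives $[G':\Z(G)]=p^k$ with $1\le k\le n-1$ (here $k\ge 1$ because the class of $G$ is $3$, so $G'\ne\Z(G)$). Suppose $m>n$; I aim to reach a contradiction with $p>2$, exactly as at the end of the proof of Lemma~\ref{lem2}, which then forces $m=n$.

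By Lemma~\ref{lem3} I would fix an element $x\in B_{G'}$, so that $\C_{G'}(x)=\Z(G)$ while $[G:\C_G(x)]=p^n$. Two elementary facts follow. Since $\C_G(x)\cap G'=\Z(G)$, the image of $\C_G(x)$ in $G/G'$ has order $[\C_G(x):\Z(G)]=p^{m+k-n}$, so $\C_G(x)G'$ is a proper subgroup of index $p^{n-k}\ge p$ in $G$. And since $[G,G']=\gamma_3(G)\le\Z(G)$, the map $g\mapsto[g,x]\Z(G)$ is a homomorphism $G\to G'/\Z(G)$ with kernel $D_x:=\{g\in G:[g,x]\in\Z(G)\}$; thus $[G:D_x]=p^r$ with $0\le r\le k$, and $G'\le D_x$, $\C_G(x)\le D_x$.

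Next I would feed this into Lemma~\ref{lem1}. Since $\C_{G'}(x)=\Z(G)$, Lemma~\ref{lem1} yields $[u,v]\in\C_G(x)$ whenever $[u,x],[v,x]\in\Z(G)$; applied inside $D_x$ this gives $[D_x,D_x]\le\C_G(x)\cap G'=\Z(G)$, so $D_x$ has nilpotency class $\le2$ and $D_x/\Z(G)$ is elementary abelian of order $[D_x:\Z(G)]=p^{m+k-r}$; likewise $\C_G(x)$ and $\C_G(x)G'$ have class $\le2$ with derived subgroup in $\Z(G)$. An abelian subgroup of $G$ containing a non-central element $a$ lies in $\C_G(a)$, hence has index $\ge p^n>p^k\ge p^r$, so $D_x$ (which contains the non-central $x$) is non-abelian of class exactly $2$; the commutator form $D_x/\Z(G)\times D_x/\Z(G)\to D_x'\le\Z(G)$, restricted in the second slot to $x$, has kernel $\C_G(x)/\Z(G)$, so the subspaces $G'/\Z(G)$ (dimension $k$) and $\C_G(x)/\Z(G)$ (dimension $m+k-n$) of $D_x/\Z(G)$ intersect trivially, forcing $r\le n-k$.

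The remaining, and in my view hardest, task is to make the parameter $m$ visible. Here I would use $\langle B_{G'}\rangle=G$ together with $\Phi(G)=G'$ (valid because $\exp(G/\Z(G))=p$ by Corollary~\ref{cor1} forces $G^p\le\Z(G)\le G'$): choose a minimal generating set $x_1,\dots,x_m\in B_{G'}$ of $G$, so that the subgroups $\C_G(x_i)G'/G'$ generate $G/G'$ although each has the fixed index $p^{n-k}$, and $G'/\Z(G)$ is spanned by the images of the commutators $[x_i,x_j]$ since $G/\gamma_3(G)$ has class $2$. I would then set up a double count in the style of the proof of Lemma~\ref{lem2} — for instance of the set of pairs $(\langle g\Z(G)\rangle,\langle h\Z(G)\rangle)$ with $g\in D_x\setminus G'$, $h\in G'\setminus\Z(G)$ and $[g,h]=1$, which lies entirely in the class-$2$ group $D_x$ and can be evaluated from the commutator form on $D_x/\Z(G)$ in one way and from the centralisers $\C_{G'}(g)$ in another — so as to obtain a $p$-power inequality in which $p^m$ appears on only one side. \textbf{The obstacle} is precisely to arrange this so that $m$ is genuinely isolated: all the structural inputs above are insensitive to the size of $\Z(G)$, so the real content is to combine the commutator data at the several generators $x_i$, equivalently to pin down $[G:D_x]$ (i.e.\ to prove that $g\mapsto[g,x]$ covers $G'$ modulo $\Z(G)$ for a suitable $x\in B_{G'}$, using the class-$3$ hypothesis), and hence to tie $[G:G']$ to $p^n$. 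Given the correct count, the contradiction with $p>2$ is a routine estimate of the kind ending the proof of Lemma~\ref{lem2}.
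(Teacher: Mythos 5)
Your preliminary reductions are correct (the index $[G:\C_G(x)G']=p^{n-k}$, the homomorphism $g\mapsto[g,x]\Z(G)$ with kernel $D_x$, the consequence $[D_x,D_x]\le\Z(G)$ of Lemma~\ref{lem1}, the bound $r\le n-k$), but the proposal stops exactly where the proof begins: you state yourself that the "obstacle" is to set up a count in which the excess of $[G:G']$ over $p^n$ is isolated, and you do not resolve it. That obstacle is the entire content of the paper's argument, and what you sketch in its place would not work as stated. A count of commuting pairs taken inside $D_x$ (pairs $g\in D_x\setminus G'$, $h\in G'\setminus\Z(G)$ with $[g,h]=1$) is essentially the count of Lemma~\ref{lem2} restricted to a class-$2$ subgroup, and, as you concede, all the structural data you have assembled are insensitive to the quantity you need to control; nothing in your setup forces the surjectivity of $g\mapsto[g,x]$ onto $G'/\Z(G)$, nor ties $[G:G']$ to $p^n$.

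The missing ideas, concretely, are the three steps of the paper's proof. First (Step 1), by choosing a minimal generating set adapted to $\C_G(x_1)$ for some $x_1\in B_{G'}$ and modifying generators so that certain commutators fall into $\Z(G)$, one shows via Lemma~\ref{lem1} and the bound $p^n=[G:\C_G([x_i,x_{m+n}])]$ that the commutators $[x_i,x_{m+n}]$, $1\le i\le k$, generate $G'$ modulo $\Z(G)$ \emph{and} that all centralizers $\C_G(h)$, $h\in G'\setminus\Z(G)$, coincide with a single subgroup $H=\C_G(G')$ containing $G'$ with $[G:H]=p^n$. This uniform $H$ is what your $D_x$-based approach lacks, and it is what makes a global count possible. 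Second (Step 2), for every $y\in G\setminus H$ one identifies $(\C_G(y)\cap H)/\Z(G)$ as having order exactly $p^m$ (the excess). Third (Step 3), one counts the pairs $\bigl(\langle x\Z(G)\rangle,\langle y\Z(G)\rangle\bigr)$ with $x\in G\setminus H$, $y\in H\setminus G'$, $[x,y]=1$ — note the two ranges straddle $H$, they are not confined to one class-$2$ subgroup — obtaining $|S|=p^{m+k}(p^n-1)(p^m-1)/(p-1)^2$; a pigeonhole argument then produces $y_0\in H\setminus G'$ commuting with "many" classes $\langle x\Z(G)\rangle$ outside $H$, and a decomposition of $\C_G(y_0)$ relative to $H$ and $G'$ converts this into $(p^n-1)\le p^k-p^{k-r}$, contradicting $k\le n-1$. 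So the final contradiction is not merely "a routine estimate of the kind ending Lemma~\ref{lem2}"; it needs the subgroup $H$, the exact value from Step 2, and the centralizer decomposition. As it stands, your proposal is an honest plan with the decisive step missing, not a proof.
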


\begin{proof}
For $n=2$, the result is proved in  \cite [Theorem 4.2]{Ishikawa1999}. So we
assume that $n>2$. Since $G'$ is abelian, it follows that  $[G:G'] \geq
p^n$. If $[G:G']=p^n$, then there is nothing to prove. So assume that
$[G:G']=p^{n+m}$ with $m>0$.  We now proceed to get a contradiction. Let
$[G':\Z(G)] = p^k$, where $k \leq n-1$ (by Lemma \ref{lem2}). We complete the
proof in several steps.

\vskip3mm
\noindent {\bf Step 1.} {\it For all $h \in G' \setminus \Z(G)$,
$\C_G(h)=\C_G(G')$.}
\vskip2mm
It follows from Lemma \ref{lem3} that we can choose an element $x_1 \in G$ such
that
$$\C_{G'}(x_1)=\C_{G}(x_1) \cap G' =\Z(G).$$
 Let $\{x_1, x_2, \ldots,x_{m+k},\ldots, x_{m+n}\}$ be
a minimal generating set for $G$ such that $\C_G(x_1)=\langle x_1, x_2, \dots,
x_{m+k}, \Z(G)\rangle$.

If $\{[x_1,x_i] \mid m+k< i\leq m+n \}\subseteq \Z(G)$, then $[x_1,x_i] \in
\Z(G)$ for all $i$ with $1 \le i \le m+n$. By Lemma \ref{lem1}, we get
$[x_r,x_s] \in \C_G(x_1) \cap G' =  \Z(G)$ for all $r, s$ with $1\le r,s\le
m+n$, that is, $G' \subseteq
\Z(G)$,
a contradiction. Thus, without loss of generality, we can assume that
$[x_1,x_{m+n}] \notin \Z(G)$. Consider the  subgroup
$$K=\langle [x_1,x_{m+n}], [x_2,x_{m+n}],\ldots, [x_{m+k}, x_{m+n}],
\Z(G)\rangle.$$
Since $[G':\Z(G)]=p^k$, among the commutators $[x_i,x_{m+n}]$, $1\leq i\leq
m+k$, at most $k$ are independent  modulo $\Z(G)$ (viewed as elements of the
vector space $G'/\Z(G)$).
We can assume,  without loss of generality, that for some integer $l$ with $1
\le l \le k$,
$[x_1,x_{m+n}], [x_2,x_{m+n}],\ldots, [x_{l}, x_{m+n}]$
are independent elements modulo $\Z(G)$, and generate $K$ along  with $\Z(G)$.

We now show that $l=k$. For any $t$ with $l<t\leq m+k$, we have
$$[x_t,x_{m+n}]\equiv [x_1, x_{m+n}]^{i_1} [x_2, x_{m+n}]^{i_2} \cdots [x_l,
x_{m+n}]^{i_l}  \pmod{\Z(G)}.$$
Let $x_t'=x_tx_1^{-i_1} \cdots x_l^{-i_l}$. Then $[x_t',x_{m+n}] \in \Z(G)$.
Thus,  with $\C_G(x_1)=\langle x_1$, $x_2$, $\ldots$, $x_{m+k}$, $\Z(G)\rangle$,
we can assume, modifying $x_t$ by $x_t'$ (if necessary), that
\begin{equation}\label{eqse3-a}
[x_t,x_{m+n}] \in \Z(G) \mbox{ for } l<t\leq m+k.
\end{equation}
Since $[x_1,x_i]=[x_1,x_j]=1$ for any $i,j\leq m+k$, by Lemma \ref{lem1},
we get
$$[x_j,x_i]\in \C_G(x_1)\cap G'=\Z(G).$$ In particular, for $l<t\leq m+k$,
$[x_t,x_i]\in \Z(G)$. Consequently by \eqref{eqse3-a} and  Lemma \ref{lem1},
$$[x_t,[x_i,x_{m+n}]]=1  \hskip5mm (l<t\leq m+k, \,\,1 \le i\leq l).$$
Thus for any $i$ with $1 \le i \leq l$, $\langle
x_{l+1},\ldots, x_{m+k}, G'\rangle \le \C_G([x_i,x_{m+n}])$; so
$$p^n=[G:\C_G([x_i, x_{m+n}])] \leq p^{m+n-(m+k-l)} =p^{n-k+l}\leq p^n.$$
Consequently, we get
\begin{enumerate}[(i)]
\item $k=l$,
\item $K=\langle [x_1,x_{m+n}], [x_2,x_{m+n}],\ldots, [x_k,x_{m+n}],
\Z(G)\rangle=G'$, and
\item $\C_G([x_i,x_{m+n}]) =\langle  x_{k+1},x_{k+2}\ldots,x_{k+m}, G'\rangle :=
H$ (say), for all $i$, $1 \le i\leq k$.
\end{enumerate}
By (ii) and (iii), we get  $\C_G(h)=H = \C_G(G')$ for all $h\in
G'\setminus\Z(G)$. This proves  Step 1.

Observe that, by Lemma \ref{lem3},
$$G \setminus H=\{x \in G \mid \C_{G'}(x)=\Z(G)\}=B_{G'},$$ which implies  that
$\C_H(x) \cap G' = \Z(G)$ for all $x \in G \setminus H$.
\vskip3mm

\noindent{\bf  Step 2.} {\it For any $y \in G \setminus H$, there exist elements
$h_1,h_2,\ldots,h_m \in G'$ such that
 $$\langle x_{k+i}h_i: 1\leq i \leq m \rangle\Z(G)/\Z(G) = (\C_G(y)\cap
H)/\Z(G)$$
 and are of order $p^m$.}

\vskip2mm
Let $y \in G \setminus H$ be an arbitrary element. Then  $\C_{G'}(y)=\Z(G)$.
Consider a minimal generating set  $Y:= \{y =y_1,y_2,\ldots,y_{m+k},\ldots,
y_{m+n}\}$ of $G$ such that $\C_G(y_1)=\langle y_1,y_2,\dots,y_{m+k},
\Z(G)\rangle.$
As in Step 1, we can modify (if necessary) the elements $y_{k+1}, \dots,$
$y_{m+k}$ so that
$$\langle y_{k+1},y_{k+2},\dots,y_{m+k}, G'\rangle = H =\langle
x_{k+1},x_{k+2},\dots,x_{m+k}, G' \rangle.$$

Consequently, for $ 1 \le i \le m$, it follows that $x_{k+i} = (\prod_j
y_{k+j}^{a_{ij}})h_i^{-1}$ for some integers $a_{ij}$ and some element $h_i \in
G'$. Hence $x_{k+i}h_i \in \C_G(y)$, which shows that
\begin{equation}\label{eqse3-0a}
\langle x_{k+i}h_i \mid 1\leq i \leq m \rangle\Z(G)/\Z(G) \le (\C_G(y)\cap
H)/\Z(G).
\end{equation}
Note that for $1 \le i, j \le m$, $[x_{k+i}h_i, x_{k+j}h_j] \in \Z(G)$. Thus
\begin{equation}\label{eqse3-0b}
|\langle x_{k+i}h_i \mid 1\leq i \leq m \rangle\Z(G)/\Z(G)| = p^m.
\end{equation}
By the vary choice of $y_{k+1}, \dots,y_{m+k}$, it follows that
$$\gen{y_{k+1}, \dots, y_{k+m}, \Z(G)} =  \C_G(y)\cap H;$$ hence  $|(\C_G(y)\cap
H)/\Z(G)| = p^m$. This, along with  \eqref{eqse3-0a} and \eqref{eqse3-0b},
proves Step 2.

\vskip3mm

\noindent{\bf Step 3.} {\it The cardinality of
$$S := \Big{\{} \big{(} \langle x\Z(G)\rangle, \langle y\Z(G)\rangle\big{)}
\mid x\in G\setminus H,\,\, y\in H\setminus G',\,\, [x,y]=1\Big{\}}$$ is
$p^{m+k}(p^n-1)(p^m-1)/(p-1)^2$.}

Since the exponent of $G/\Z(G)$ is $p$, it follows that $G/\Z(G)$, $H/\Z(G)$ and
$G'/\Z(G)$ have $(p^{n+m+k}-1)/(p-1)$, $(p^{m+k}-1)/(p-1)$ and $(p^k-1)/(p-1)$
number of subgroups of order $p$ respectively. Consequently
\begin{equation}\label{eqse3-1}
|\{\gen{x\Z(G)} \mid x \in G\setminus H\}| = \frac{p^{m+k}(p^n-1)}{p-1}
\end{equation}
and
\begin{equation}\label{eqse3-2}
|\{\gen{y\Z(G)} \mid y \in H\setminus G'\}| = \frac{p^{k}(p^m-1)}{p-1}.
\end{equation}

For each $x \in G \setminus H$, by Step  $2$, we have
$|(\C_G(x) \cap H)/\Z(G)| = p^m$; hence
\begin{equation}\label{eqse3-3}
|\{\gen{y\Z(G)} \mid y \in H \setminus G', [x, y] = 1\}| = \frac{p^m-1}{p-1}.
\end{equation}

 Hence, by \eqref{eqse3-1} and \eqref{eqse3-3}, we have
 \begin{equation}\label{eqse3-4}
 |S|=\frac{p^{m+k}(p^n-1)(p^m-1)}{(p-1)^2},
 \end{equation}
and the proof of Step 3 is complete.

We now proceed to get the final contradiction.
Note that there exists  some element $y_0\in H \setminus G'$ such that
\begin{equation}\label{eqse3-5}
|\{\gen{x\Z(G)} \mid x \in G\setminus H, [x,y_0]=1\}| \ge
\frac{p^{m}(p^n-1)}{p-1}.
\end{equation}
For, if there is no such $y_0$, then for each $y \in H \setminus G'$, we get
$$|\{\gen{x\Z(G)} \mid x \in G\setminus H, [x,y]=1\}| <
\frac{p^{m}(p^n-1)}{p-1}.$$
But then
$$|S| < \frac{p^{m+k}(p^n-1)(p^m-1)}{(p-1)^2},$$
which is absurd.

Note that $G' \le \C_G(y_0)$, $[C_G(y_0):G']=p^m$, $|G/H| = p^n$ and $|H/G'| =
p^m$.
 Assume that
 $$|\C_G(y_0)H/H| = p^{n-s} ~~\text{and} ~~ |\C_H(y_0)/G'| = p^{m-r},$$
 for some integers $s$ and $r$.
 Since $[G: \C_G(y_0)] = p^n$, it follows that $n = s+r$.
 Then
 $$|\C_G(y_0)/\Z(G)| = p^{k+(m-r)+(n-s)} = p^{k+m}$$
 and
 $$|\C_H(y_0)/\Z(G)| = p^{k+(m-r)}.$$
 Thus
 \begin{eqnarray*}
 |\{\gen{x\Z(G)} \mid x \in G \setminus H, [x, y_0] = 1\}| &=&
\frac{(p^{k+m}-1)-(p^{k+m-r}-1)}{p-1}\\
 &=& \frac{p^{m+k-r}(p^{n-s}-1)}{p-1}.
 \end{eqnarray*}

The preceding equation along with \eqref{eqse3-5} gives
$$p^m(p^n-1)/(p-1)  \le p^{m+k-r}(p^{n-s}-1)/(p-1),$$
which implies that $(p^n-1)  \leq  p^k - p^{k-r}.$
Hence, by Lemma \ref{lem2},
$$p^n < p^k +1 \le p^{n-1} + 1,$$
which is absurd, and the proof of the lemma is complete.
\end{proof}

As an immediate consequence of the preceding lemma, we get the following
important information about centralizers of elements in $G$, which we use
frequently without any further reference.
\begin{cor}
If $G$ satisfies Hypothesis (A1), then for all $x \in G \setminus G'$ the
following hold:
\begin{enumerate}
\item $\C_G(x) \cap G'=\Z(G).$
\item $[\C_G(x) : \Z(G)] = [G' : \Z(G)]$.
\end{enumerate}
\end{cor}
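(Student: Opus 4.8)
The plan is to obtain both assertions as quick consequences of Lemma~\ref{lem4}, whose conclusion $[G:G']=p^n$ is exactly the extra input needed. The one auxiliary fact I would isolate first is the following: under Hypothesis~(A1), \emph{every non-central element of $G'$ has centralizer equal to $G'$ on the nose}. To see this, fix $y\in G'\setminus\Z(G)$. Since $G$ has nilpotency class $3$, the subgroup $G'$ is abelian, so $G'\le\C_G(y)$. On the other hand $y\notin\Z(G)$, so the conjugate type hypothesis gives $[G:\C_G(y)]=p^n$; together with $[G:G']=p^n$ from Lemma~\ref{lem4} this yields $|\C_G(y)|=|G|/p^n=|G'|$. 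A subgroup of $G$ that contains $G'$ and has the same order as $G'$ must equal $G'$, so $\C_G(y)=G'$.

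For part (1), let $x\in G\setminus G'$; as $\Z(G)\le G'$ we have $x\notin\Z(G)$. The inclusion $\Z(G)\le\C_G(x)\cap G'$ is immediate. Conversely, suppose $y\in\C_G(x)\cap G'$ with $y\notin\Z(G)$. Then $x\in\C_G(y)$, and by the observation above $\C_G(y)=G'$, forcing $x\in G'$ — a contradiction. Hence $\C_G(x)\cap G'\le\Z(G)$, which gives the desired equality.

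For part (2), I would again use that $x\notin\Z(G)$, so $[G:\C_G(x)]=p^n$ and therefore $|\C_G(x)|=|G|/p^n=|G'|$ by Lemma~\ref{lem4}. Since $\Z(G)\le\C_G(x)$, dividing orders by $|\Z(G)|$ gives $[\C_G(x):\Z(G)]=|G'|/|\Z(G)|=[G':\Z(G)]$ (which, incidentally, equals $p^k$ in the notation of Lemma~\ref{lem2}).

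I do not expect a genuine obstacle here: all the substantive work has been done in Lemma~\ref{lem4} and Lemma~\ref{lem2}, and the corollary is a short index computation. The only points requiring any care are the elementary cardinality bookkeeping and the (mild) use of nilpotency class $3$ to know that $G'$ is abelian, which is what makes $G'\le\C_G(y)$ available for $y\in G'$.
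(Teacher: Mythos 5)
Your argument is correct and is exactly the intended derivation: the paper states this corollary as an immediate consequence of Lemma~\ref{lem4}, and your route (using $[G:G']=p^n$ together with $G'$ abelian to get $\C_G(y)=G'$ for $y\in G'\setminus\Z(G)$, then the short index computation) is precisely that. No issues to flag.
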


\begin{thm}\label{lem6}
Let $G$ satisfies Hypothesis ({A1}). If $[G':\Z(G)]=p^m$, then
$n=2m$.
\end{thm}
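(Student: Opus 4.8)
The plan is to exploit the structure already forced by Lemma \ref{lem4}, namely $[G:G']=p^n$, together with the equalities $[\C_G(x):\Z(G)]=[G':\Z(G)]=p^m$ for every $x\in G\setminus G'$ (the preceding Corollary). First I would record the two ``extreme'' constraints on $m$ relative to $n$: from Lemma \ref{lem2} we already know $m\le n-1$, and I expect the reverse-type inequality $n\le 2m$ to come cheaply from a generation/breadth argument, so that the real content is the single inequality $n\le 2m$ \emph{and} $2m\le n$ simultaneously, i.e. pinning $n$ exactly. Concretely, since $G/\Z(G)$ has exponent $p$ and $\gamma_3(G)\le\Z(G)\le G'$, for any $x\in G\setminus G'$ the map $G'/\Z(G)\to\gamma_3(G)$, $y\Z(G)\mapsto[x,y]$ is linear with kernel $(\C_G(x)\cap G')/\Z(G)=\{1\}$ by the Corollary; hence $[x,\,\cdot\,]$ injects $G'/\Z(G)$ into $\gamma_3(G)$, giving $|\gamma_3(G)|\ge p^m$. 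On the other hand $\gamma_3(G)\le G'\cap\C_G(x)$ is false in general, but $\gamma_3(G)\le\Z(G)$, so I would instead use that $[G,x]$ is a subgroup of $\Z(G)$ of order $p^{n}/1$ — wait: $[G:\C_G(x)]=p^n$ but $[G':\C_G(x)]=$ ? Here I must be careful, and this is where the argument needs real work.

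The key step, and the one I expect to be the main obstacle, is to show $n\le 2m$. I would argue as follows. Fix $x\in G\setminus G'$; then $\C_G(x)G'/G'$ has order $p^{n-t}$ where $p^{t}=[\C_G(x)G':G']$, and since $|\C_G(x)/\Z(G)|=p^m$ while $|(\C_G(x)\cap G')/\Z(G)|=1$, we get $|\C_G(x)G'/G'|=p^m$, so $m\le n$ automatically — that only re-proves the easy bound. To get $n\le 2m$ I would instead count commutators: the subgroup $[G,x]\subseteq\gamma_2(G)$ has order $[G:\C_G(x)]=p^n$ when $[G,x]$ is a subgroup, but $[G,x]$ need not be a subgroup unless $[G,x]\subseteq\Z(G)$. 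So the correct route is to pick $x$ with $[G,x]\not\subseteq\Z(G)$ (possible since class is $3$) and analyze the quotient $\bar G=G/\gamma_3(G)$, which is of class $2$; there $\overline{[G,x]}$ is a subgroup of $\bar G'=G'/\gamma_3(G)$ of order $p^{n}/|[\gamma_3(G)\cap\text{stuff}]|$. Ishikawa's class-$2$ analysis (\cite{Ishikawa1999}, already invoked in Lemma \ref{lem4}) tells us $[\bar G:\bar G']=p^n$ forces $|\bar G'|\le p^{?}$; combining $|G'/\gamma_3(G)|$ and $|\gamma_3(G)|$ with the injection $G'/\Z(G)\hookrightarrow\gamma_3(G)$ from the first paragraph should yield $n\le |G'/\Z(G)|$-type bounds that, squeezed against $m\le n-1$, give exactly $n=2m$.

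The cleanest finish I foresee: from the injection $x$ induces of $G'/\Z(G)$ into $\gamma_3(G)$ we get $|\gamma_3(G)|\ge p^m$; dually, fixing $h\in\gamma_3(G)\setminus\{1\}$ (so $h\in\Z(G)$) contributes nothing, so instead fix $y\in G'\setminus\Z(G)$ and look at $[G,y]$, which \emph{is} a subgroup of $\Z(G)$ since $[G,G']\le\gamma_3(G)\le\Z(G)$; then $|[G,y]|=[G:\C_G(y)]=p^n$, forcing $|\Z(G)|\ge p^n$, hence $|G'|\ge p^{n+m}$ and $|G|\ge p^{2n+m}$. Now double-count the bilinear form $G'/\Z(G)\times G/\Z(G)\to\gamma_3(G)$ as in Lemma \ref{lem2}: nondegeneracy in the first variable (the Corollary) forces $\dim_{\mathbb F_p}\gamma_3(G)\ge m$, while the fact that every non-central element has centralizer of index exactly $p^n$ forces the ``width'' in the second variable to be $n-m$ (since $\C_G(y)\supseteq G'$ has index $p^{m}$ in a group where... ), and bilinear nondegeneracy plus these ranks yields $m\le n-m$, i.e. $n\le 2m$. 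Together with $m\le n-1$ — which must be upgraded to $2m\le n$ by a parallel count, the obstacle being to rule out $m<n-m$ via a pigeonhole on the $p^{n}$-sized conjugacy classes sitting inside $G'$ of order $p^{n+m}$ — we conclude $n=2m$.
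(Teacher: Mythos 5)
Your proposal does not actually establish either of the two inequalities that make up the theorem. The preliminary observations are fine: for $x\notin G'$ the map $y\Z(G)\mapsto[x,y]$ embeds $G'/\Z(G)$ into $\gamma_3(G)$, and for $y\in G'\setminus\Z(G)$ one even has $\C_G(y)=G'$, whence $|\gamma_3(G)|\ge p^{n}$. But these only bound $|\gamma_3(G)|$ and impose no relation between $n$ and $2m$. The step you lean on for $n\le 2m$ --- ``bilinear nondegeneracy plus these ranks yields $m\le n-m$'' --- is not a valid inference: the pairing $G'/\Z(G)\times G/G'\to\gamma_3(G)$ takes values in an elementary abelian group of rank at least $n$ (by your own count), not in a one-dimensional space, and a pairing into a group of large rank can be nondegenerate in both variables no matter what the dimensions of the two sides are (think of the universal pairing into a tensor product). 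So no inequality between $m$ and $n$ follows from nondegeneracy. The reverse inequality $2m\le n$ is only gestured at (``must be upgraded \dots the obstacle being \dots''), and the appeal to Ishikawa's class-two analysis of $G/\gamma_3(G)$ is left unspecified. As written, the argument concludes $n=2m$ from two inequalities neither of which has been derived.

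The missing mechanism is the one the paper uses in both directions: pick $x,y$ with $[x,y]\notin\Z(G)$ (possible since the class is $3$) and produce an element $w\notin G'$ with $[x,w],[y,w]\in\Z(G)$; then Lemma \ref{lem1} (the Hall--Witt consequence) gives $[x,y]\in\C_G(w)\cap G'=\Z(G)$, a contradiction. Such a $w$ is found by an order count in the two cases separately: if $n>2m$, work in $\overline G=G/\Z(G)$, which has class $2$ and $|\overline G{}'|=p^m$, so $\C_{\overline G}(\overline x)$ and $\C_{\overline G}(\overline y)$ each have index at most $p^m$ and their intersection has order exceeding $|\overline G{}'|$, hence contains some $\overline w\notin\overline G{}'$; if $n<2m$, use $[\C_G(x)G':G']=[\C_G(x):\Z(G)]=p^m$ (and likewise for $y$) together with $[G:G']=p^n<p^{2m}$ to see that $\C_G(x)G'\cap\C_G(y)G'$ properly contains $G'$, and any $w$ there outside $G'$ satisfies $[w,x],[w,y]\in\gamma_3(G)\le\Z(G)$. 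Your proposal never invokes Lemma \ref{lem1} or this common-almost-centralizer device, and without it (or a genuinely different mechanism) the counting you sketch cannot close the argument.
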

\begin{proof}
Consider $x,y\in G$ with $[x,y]\notin \Z(G)$. We consider two cases, namely (1)
$n > 2m$ and (2) $n <2m$, and get contradiction in both.

\textbf{Case 1.} $n > 2m$

Write $\overline{G}=G/\Z(G)$. Since $\overline{G}$ is of nilpotency class $2$
and  $|\overline{G}'|=p^m$, we have
$$[\overline{G}:\C_{\overline{G}}(\overline{x})]\leq p^m \mbox{ and }
[\overline{G}:\C_{\overline{G}}(\overline{y})]\leq p^m.$$
Since $[\overline{G}:\overline{G}']=p^n>p^{2m}$, there exists
$\overline{w} \notin
\overline{G}'$ such that $\overline{w}\in \C_{\overline{G}}(\overline{x})
\cap \C_{\overline{G}}(\overline{y})$, i.e. $[x,w],[y,w]\in \Z(G)$.
Since  $w \notin G'$, by Lemma \ref{lem1}, $[x,y]\in \C_G(w)\cap G'=\Z(G)$,
a contradiction to our supposition that  $[x,y] \notin \Z(G)$.

\textbf{Case 2.} $n < 2m$.

Since $x \not\in G'$, we have
$$[\C_G(x)G':G']=[\C_G(x):\C_G(x)\cap G']= [\C_G(x) : \Z(G)] = p^m,$$
 and similarly $[\C_G(y)G':G']=p^m$.
Since $n<2m$,  we have $[G:G']=p^n<p^{2m}$; hence $\C_G(x)G'\cap \C_G(y)G'$
contains $G'$ properly. Consider $w\in \C_G(x)G'\cap \C_G(y)G'$ with $w\notin
G'$. Since
$G$ is of nilpotency class $3$, it is easy to see that $[w,x], [w,y]\in \Z(G)$,
hence
by Lemma \ref{lem1}, $[x,y]\in \C_G(w)\cap G'=\Z(G)$, a contradiction again.

Hence $n =2m$, and the proof is complete.
\end{proof}

Before proceeding further, we strengthen Hypothesis (A1) as follows:

\vspace{3mm}
\noindent {\bf Hypothesis (A2).} We say that a finite $p$-group $G$ satisfies
\emph{Hypothesis
(A2)}, if $G$ is  of nilpotency class $3$ and of conjugate type
$(1,p^{2m})$ with $\Z(G) \le G'$.
\vspace{3mm}

A group $G$ is said to be a {\it Camina group} if $xG' = x^G$ for all $x \in G
\setminus G'$.
For determining the structure of $G/\Z(G)$ when $G$ satisfies Hypothesis (A2),
the following result  of Verardi \cite{Verardi1987} (also see \cite[Lemma
1.2]{Mann-Scoppola}) will be used.
\vspace{3mm}

\begin{thm}~\cite[Lemma 1.2]{Mann-Scoppola}\label{prop8}
For an odd prime $p$, let $G$ be a Camina $p$-group  of order $p^{3n}$,
of exponent $p$ and of nilpotency class $2$. Let $[G:G']=p^{2n}$ and there are
two elementary abelian subgroups $A^{*}$, $B^{*}$  of $G$ such that
$G=A^{*}B^{*}$, $A^{*}=A \times G'$, $B^{*}=B \times G'$, and thus $G=ABG'$.
Then the following statements are equivalent:
\begin{enumerate}
\item $G$ is isomorphic to ${\rm U}_3(p^n)$.
\item All the centralizers of non-central elements of $G$ are abelian.
\end{enumerate}
\end{thm}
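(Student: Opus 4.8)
The plan is to treat the two implications separately; $(1)\Rightarrow(2)$ is a short computation, while $(2)\Rightarrow(1)$ is the substantial part. For $(1)\Rightarrow(2)$, write $q=p^n$ and recall that in $G={\rm U}_3(q)$ one has $\Z(G)=G'\cong\mathbb{F}_q$, $|G|=q^3$, and that commutation induces the $\mathbb{F}_q$-bilinear alternating form $G/G'\times G/G'\to G'$ sending $((\alpha_1,\beta_1),(\alpha_2,\beta_2))$ to $\alpha_1\beta_2-\alpha_2\beta_1$. For a noncentral $x$, the subgroup $\C_G(x)/G'$ is the orthogonal complement of $\overline x$, hence is the $\mathbb{F}_q$-line through $\overline x$; since such a line is totally isotropic for the form, $\C_G(x)$ is abelian. (This also gives $|\C_G(x)|=q^2$, so $G$ is a Camina group, consistent with the hypotheses.)

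For $(2)\Rightarrow(1)$, the idea is to read off from the data a nondegenerate bilinear pairing, use the abelian-centralizer hypothesis to force it to be field multiplication, and then reconstruct $G$. Put $V=G/G'$ and $W=G'$, elementary abelian $\mathbb{F}_p$-spaces with $\dim W=n$ and $\dim V=2n$, and let $f\colon V\times V\to W$ be the alternating map from commutation. Since $G$ is a Camina group, $f(v,\cdot)$ maps onto $W$ for every $v\ne 0$, so every totally isotropic subspace of $V$ has dimension at most $n$. The images $\overline A$, $\overline B$ of $A$, $B$ in $V$ are totally isotropic (as $A,B$ are abelian) and together span $V$, so $\dim\overline A=\dim\overline B=n$, $V=\overline A\oplus\overline B$, $|A|=|B|=q$, and $\mu:=f|_{\overline A\times\overline B}$ satisfies $f(a_1+b_1,a_2+b_2)=\mu(a_1,b_2)-\mu(a_2,b_1)$. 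Moreover, for $1\ne a\in A$ one has $\C_G(a)=A^*$ (it is abelian of order $q^2=[G:G']$ while $[G:\C_G(a)]=q$), so $B\cap\C_G(a)\le B\cap A^*=1$; hence $\mu(\overline a,\cdot)$ is injective, and so bijective, and symmetrically $\mu(\cdot,\overline b)$ is bijective for $1\ne b\in B$. Fixing $1\ne a_0\in A$, $1\ne b_0\in B$ and identifying $\overline A\cong W\cong\overline B$ via $\mu(\overline{a_0},\cdot)$ and $\mu(\cdot,\overline{b_0})$, the pairing $\mu$ becomes a bilinear multiplication $m\colon W\times W\to W$ with a left identity $e_1$, a right identity $e_2$, and no zero divisors.

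Now the hypothesis that all noncentral centralizers are abelian enters. For a noncentral $g$ with $g\equiv a+b\pmod{G'}$ ($a\in\overline A$, $b\in\overline B$), the subspace $\C_G(g)/G'$ equals $\{a'+b':\mu(a',b)=\mu(a,b')\}$, and (2) requires it to be totally isotropic; transported through the identifications, this says exactly that for all nonzero $u,v\in W$ the bilinear form $(x,y)\mapsto m(x,L_u^{-1}R_vy)$ is symmetric, where $L_u,R_v$ denote left and right multiplication by $m$. Taking $u=e_1$, $v=e_2$ makes $L_u^{-1}R_v$ the identity, so $m$ is commutative (and $e_1=e_2$); taking $v=e_2$ and substituting $x=m(u,x')$, $y=m(u,y')$ gives $m(m(u,x'),y')=m(m(u,y'),x')$ for all $u,x',y'$, which together with commutativity forces $m$ to be associative. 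Hence $(W,m)$ is a finite commutative associative $\mathbb{F}_p$-algebra with no zero divisors, i.e.\ a field, necessarily $\mathbb{F}_{p^n}$, and $\mu$ is its multiplication. Finally, since $exp(G)=p$ and $G$ has class $2$ with $G'=\Z(G)$, every element of $G$ is uniquely $abc$ with $a\in A$, $b\in B$, $c\in G'$, and $(a_1b_1c_1)(a_2b_2c_2)=(a_1a_2)(b_1b_2)(c_1c_2[b_1,a_2])$ with $[b_1,a_2]$ determined by $\mu$; substituting $\mu=$ field multiplication identifies $G$ with ${\rm U}_3(p^n)$.

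The step I expect to be the main obstacle is the heart of $(2)\Rightarrow(1)$: correctly translating ``all noncentral centralizers are abelian'' into the symmetry condition on $(x,y)\mapsto m(x,L_u^{-1}R_vy)$, and then extracting a genuine \emph{field} rather than merely a presemifield. The decisive point is that (2) is strong enough to yield \emph{both} commutativity (the case $L_u^{-1}R_v=\mathrm{id}$) and the permutability identity above, after which the elementary fact that a finite commutative domain is a field closes the argument; one should also note that the cases $a=0$ or $b=0$ of the centralizer condition are automatic and add nothing.
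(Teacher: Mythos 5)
The paper does not actually prove Theorem \ref{prop8}: it is imported verbatim from Mann--Scoppola \cite[Lemma 1.2]{Mann-Scoppola} (going back to Verardi \cite{Verardi1987}), so there is no in-paper argument to compare yours against. Judged on its own, your proof is correct and is in the spirit of those sources. The direction $(1)\Rightarrow(2)$ via the nondegenerate alternating $\mathbb{F}_q$-form on $G/G'$ is fine. For $(2)\Rightarrow(1)$, your chain of reductions checks out: the Camina property makes $f(v,\cdot)$ surjective for $v\neq 0$, so isotropic subspaces have dimension at most $n$, forcing $V=\overline{A}\oplus\overline{B}$ with both summands of dimension $n$; the computation $\C_G(a)=A^{*}$ for $1\neq a\in A$ (order count plus $A^{*}\leq\C_G(a)$) gives nondegeneracy of $\mu$ without using (2); the identity $f(a'+b',a+b)=\mu(a',b)-\mu(a,b')$ correctly converts ``$\C_G(g)$ abelian'' into symmetry of $(x,y)\mapsto m(x,L_u^{-1}R_vy)$ for all nonzero $u,v$; and the specializations $u=v=e$ and $v=e$ yield commutativity and the exchange law $m(m(u,x),y)=m(m(u,y),x)$, which together do give associativity, so $(W,m)$ is a finite integral domain, hence $\mathbb{F}_{p^n}$, and the cocycle reconstruction identifies $G$ with ${\rm U}_3(p^n)$. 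What your approach buys is a self-contained, essentially linear-algebraic proof of a result the paper treats as a black box. Three small points you should make explicit: (i) $\Z(G)=G'$ (class $2$ gives $G'\leq\Z(G)$, and the Camina condition rules out central elements outside $G'$), which you use when equating ``noncentral'' with ``nontrivial image in $V$'' and when passing from ``$\C_G(g)$ abelian'' to total isotropy of $\C_G(g)/G'$; (ii) with your identifications via $\mu(\overline{a_0},\cdot)$ and $\mu(\cdot,\overline{b_0})$ the element $e=\mu(\overline{a_0},\overline{b_0})$ is automatically a two-sided identity, so the talk of distinct $e_1,e_2$ is unnecessary (though harmless); (iii) the final sentence deserves one explicit line, e.g.\ the map $abc\mapsto I+x_aE_{21}-y_bE_{32}+cE_{31}$, to verify the cocycles match.
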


\begin{thm}\label{lem7}
Let $G$ satisfy Hypothesis (A2). Write $\overline{G}=G/\Z(G)$ and
$\overline{x}=x\Z(G)$ for $x\in G$. Then the following hold:
\begin{enumerate}
\item $\overline{\C_G(x)G'}= \C_{\overline{G}}(\overline{x})$ and
$|\C_{\overline{G}}(\overline{x})|=p^{2m}$ for $x\in G\setminus G'$.
\item$\overline{G}$ is a Camina $p$-group of order $p^{3m}$,
exponent $p$, and $|\overline{G}'|=p^m$.
\item All the  centralizers of non-central elements in $\overline{G}$ are
elementary abelian of
order $p^{2m}$.
\item If $A=\C_{\overline{G}}(\overline{x})$ and
$B=\C_{\overline{G}}(\overline{y})$ for $\overline{x}, \overline{y} \not\in
\Z(\overline{G})$  are
distinct centralizers in $\overline{G}$, then
$A\cap B=\Z(\overline{G})$ and $\overline{G} = AB$.
\item $\overline{G}$ is isomorphic to ${\rm U}_3(p^m)$.
\end{enumerate}
\end{thm}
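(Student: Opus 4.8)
The plan is to establish the five items essentially in the order listed, since each feeds the next, and then invoke Theorem \ref{prop8} at the very end. For item (1): since $G$ has nilpotency class $3$, the map $x \mapsto \overline{x}$ sends $\C_G(x)G'$ into $\C_{\overline{G}}(\overline{x})$, because $[x,w]\in\Z(G)$ whenever $w\in\C_G(x)G'$ (a short commutator-collecting computation using $\gamma_3(G)\le\Z(G)$ together with the fact that $G'$ is abelian). For the reverse inclusion, if $\overline{w}\in\C_{\overline{G}}(\overline{x})$ then $[x,w]\in\Z(G)$; writing $w = w_0 g$ with $w_0\in G\setminus G'$ and using Lemma \ref{lem1} one pins $w$ down modulo $\C_G(x)G'$. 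The order count $|\C_{\overline{G}}(\overline{x})|=p^{2m}$ is then immediate from the Corollary following Lemma \ref{lem4}: $[\C_G(x):\Z(G)]=[G':\Z(G)]=p^m$ and $[\C_G(x)G':G']=[\C_G(x):\C_G(x)\cap G']=[\C_G(x):\Z(G)]=p^m$, so $|\C_G(x)G'/\Z(G)| = p^m\cdot p^m=p^{2m}$.

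For item (2): $\overline{G}$ has class $2$ (as $\gamma_3(G)\le\Z(G)$) and exponent $p$ by Corollary \ref{cor1}; $|\overline{G}'| = [G':\Z(G)]=p^m$ by Theorem \ref{lem6} (or rather by Lemma \ref{lem2} and the hypothesis), $[\overline{G}:\overline{G}'] = [G:G']=p^{2m}$ by Lemma \ref{lem4}, so $|\overline{G}|=p^{3m}$. To see $\overline{G}$ is Camina, take $\overline{x}\notin\overline{G}'$; then by item (1), $[\overline{G}:\C_{\overline{G}}(\overline{x})]=p^{3m}/p^{2m}=p^m=|\overline{G}'|$, and since class $2$ forces $\overline{x}^{\overline{G}}\subseteq \overline{x}\,\overline{G}'$, equality of sizes gives $\overline{x}^{\overline{G}} = \overline{x}\,\overline{G}'$, which is exactly the Camina condition. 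Item (3): the centralizer $\C_{\overline{G}}(\overline{x})$ equals $\overline{\C_G(x)G'}$, which modulo $\Z(G)$ is generated by the image of $\C_G(x)$ together with $\overline{G}'$; since $\C_G(x)\cap G'=\Z(G)$ the image of $\C_G(x)$ is abelian of order $p^m$ intersecting $\overline{G}'$ trivially, and $\overline{G}'$ is central in $\overline{G}$, so the whole centralizer is abelian — and of exponent $p$, hence elementary abelian, of order $p^{2m}$ by item (1).

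Item (4): if $A=\C_{\overline{G}}(\overline{x})\ne B=\C_{\overline{G}}(\overline{y})$, then $A\cap B$ is a subgroup properly smaller than each (if $A\cap B=A$ then $A\le B$, and by symmetry of the centralizing relation inside the abelian-centralizer setting this forces $A=B$); so $|A\cap B|\le p^{2m-1}$, whence $|AB|=|A||B|/|A\cap B|\ge p^{2m+1}$. But one shows $A\cap B\supseteq\overline{G}'$ (any $\overline{z}$ with $[x,z],[y,z]\in\Z(G)$ commutes with $\overline{x},\overline{y}$; and $\overline{G}'$ is central, hence in every centralizer) and in fact $A\cap B=\overline{G}'$ has order exactly $p^m$: if it were larger it would contain some $\overline{z}\notin\overline{G}'$, and then $\C_{\overline{G}}(\overline{z})\supseteq\langle A,B\rangle$ would be too big. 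With $|A\cap B|=p^m$ we get $|AB|=p^{3m}=|\overline{G}|$, so $\overline{G}=AB$. Finally, item (5): take $A=A^*=\C_{\overline{G}}(\overline{x})$ and $B=B^*=\C_{\overline{G}}(\overline{y})$ for a pair with $[\overline{x},\overline{y}]\ne 1$; by item (3) these are elementary abelian of order $p^{2m}=p^m\cdot p^m$ each containing $\overline{G}'$, so $A^*=A\times\overline{G}'$, $B^*=B\times\overline{G}'$ for suitable complements $A,B$, and $\overline{G}=A^*B^*=AB\overline{G}'$ by item (4); all hypotheses of Theorem \ref{prop8} are met, and condition (2) of that theorem (all centralizers of non-central elements abelian) holds by item (3), so $\overline{G}\cong{\rm U}_3(p^m)$.

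The main obstacle is the bookkeeping in items (1) and (4): proving $\C_{\overline{G}}(\overline{x}) = \overline{\C_G(x)G'}$ exactly (not just $\supseteq$) requires care because a priori an element $\overline{w}$ centralizing $\overline{x}$ only gives $[x,w]\in\Z(G)$, and one must rule out that this enlarges the centralizer beyond $\C_G(x)G'$ — this is where Lemma \ref{lem1} and the precise index $[G:G']=p^n$ from Lemma \ref{lem4} are indispensable. Once the centralizers are identified as elementary abelian of order exactly $p^{2m}$, the pairwise-intersection analysis in item (4) is a counting argument of the same flavour as the proofs of Lemmas \ref{lem2}--\ref{lem4}, and item (5) is then a direct citation of Verardi's theorem.
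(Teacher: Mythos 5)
Your items (2)--(5) are sound, but item (1) --- the heart of the theorem --- has a genuine gap. The forward inclusion $\overline{\C_G(x)G'}\le \C_{\overline{G}}(\overline{x})$ and the count $|\overline{\C_G(x)G'}|=p^{2m}$ are fine, but your claim that $|\C_{\overline{G}}(\overline{x})|=p^{2m}$ is then ``immediate'' presupposes the reverse inclusion, and your sketch of that inclusion (``write $w=w_0g$ with $w_0\in G\setminus G'$ and use Lemma \ref{lem1} to pin $w$ down modulo $\C_G(x)G'$'') is not an argument: from $[x,w]\in\Z(G)$ alone it is not clear to which triple Lemma \ref{lem1} should be applied, and what actually has to be shown is that $[x,w]$ lies in $[x,G']$ --- which is precisely Corollary \ref{se3-cor1}, a statement the paper \emph{deduces from} Theorem \ref{lem7}(1), so you cannot use it here. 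The paper proves the needed upper bound $|\C_{\overline{G}}(\overline{x})|\le p^{2m}$ by contradiction: first $\overline{x}\notin\Z(\overline{G})$ (otherwise $[x,x_i]\in\Z(G)$ for a generating set, and Lemma \ref{lem1} forces $G'\le\Z(G)$); then choose $t$ with $[t,x]\notin\Z(G)$; if $|\C_{\overline{G}}(\overline{x})|>p^{2m}$, then since $[\overline{G}:\C_{\overline{G}}(\overline{t})]\le p^m$ (class $2$, $|\overline{G}'|=p^m$), the intersection $\C_{\overline{G}}(\overline{x})\cap\C_{\overline{G}}(\overline{t})$ has order exceeding $p^m$ and so contains some $\overline{w}$ with $w\notin G'$; Lemma \ref{lem1} then gives $[x,t]\in\C_G(w)\cap G'=\Z(G)$, a contradiction. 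You flagged this identification as ``the main obstacle,'' but flagging it is not resolving it, and since the exact value $p^{2m}$ is what drives (2), (3), (4) and the hypotheses of Theorem \ref{prop8}, the gap propagates through the whole proof.

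For the record, the remainder of your outline does track the paper once (1) is secured: the Camina property follows from the index count together with $\overline{x}^{\overline{G}}\subseteq \overline{x}\,\overline{G}'$; your justification in (3) actually works and is even a touch simpler than the paper's, since $[\C_G(x),\C_G(x)]\le \C_G(x)\cap G'=\Z(G)$ directly (the paper routes this through Lemma \ref{lem1}); in (4) your argument (any $\overline{w}\in A\cap B$ centralizes $AB$, which is too large unless $\overline{w}$ is central) is the paper's, though to match the stated conclusion $A\cap B=\Z(\overline{G})$ you should add the one-line identification $\Z(\overline{G})=\overline{G}'$, which follows from $[G,G']\le\gamma_3(G)\le\Z(G)$ together with the Camina property; and (5) is indeed just the citation of Theorem \ref{prop8}.
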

\begin{proof}
$(1)$ By Lemma \ref{lem4} and Theorem \ref{lem6}, $[G:G']=p^{2m}$ and
$[G':\Z(G)]=p^m$; hence
$$|\overline{G}|=p^{3m}~~ \mbox{and}~~|\overline{G}'|=p^m.$$
 Further, for $h\in G'\setminus \Z(G)$, $\C_G(h)=G'$.

Since $\overline{G}$ is of nilpotency class $2$, $\overline{G}'$ is contained in
the centralizer of every element  in $\overline{G}$. Hence
$\overline{\C_G(x)G'}\leq \C_{\overline{G}}(\overline{x})$ for any $x\in
G\setminus G'$. Since $[\C_G(x):\Z(G)]=[G':\Z(G)]=p^m$ with $\C_G(x)\cap
G'=\Z(G)$, we get $[\C_G(x)G':\Z(G)]=p^{2m}$ and $[G:\C_G(x)G']=p^m$. Hence
$[\overline{G}:\C_{\overline{G}}(\overline{x})]\leq p^m$ and
$|\C_{\overline{G}}(\overline{x})|\geq p^{2m}$.

Fix $x\in G\setminus G'$. If possible, suppose that
$|\C_{\overline{G}}(\overline{x})| > p^{2m}$, that is,
$[\overline{G}:\C_{\overline{G}}(\overline{x})]<p^m$. Note that
$\overline{x}\notin \Z(\overline{G})$. For, let $\overline{x}\in
\Z(\overline{G})$. For any minimal generating set $\{x_1,\ldots, x_{2m}\}$
of $G$, $\{\overline{x}_1,\ldots,\overline{x}_{2m}\}$ is also a minimal
generating set for $\overline{G}$. Then $[\overline{x},\overline{x}_i]=1$, that
is $[x,x_i]\in \Z(G)$ for $1\le i\le 2m$. Then by Lemma \ref{lem1}, for $1\le
i\le 2m$, $[x_i,x_j]\in C_G(x)\cap G'=\Z(G)$ (since $x\notin G'$); hence
$G'\subseteq \Z(G)$, a contradiction.

Then there exists $t\in G$ such that $[t,x]\notin \Z(G)$. Since
$|\overline{G}'|=p^{m}$, we have
$[\overline{G}:\C_{\overline{G}}(\overline{t})]
\leq p^{m}$, and therefore it follows that
$\C_{\overline{G}}(\overline{x})\cap \C_{\overline{G}}(\overline{t})$ contains
$\overline{G}'$ properly. Take $\overline{w}\in
\C_{\overline{G}}(\overline{x})\cap \C_{\overline{G}}(\overline{t})\setminus
\overline{G}'$. Then $[w,x], [w,t]\in \Z(G)$ with $w\notin G'$. By Lemma
\ref{lem1}, $[x,t]\in \C_G(w) \cap G'=\Z(G)$, a contradiction. Thus
$|\C_{\overline{G}}(\overline{x})|=p^{2m}=|\overline{\C_G(x)G'}|$. This proves
assertion $(1)$.

$(2)$ By Corollary \ref{cor1}, $exp(\overline{G})=p$. Now the assertion $(2)$
follows from assertion $(1)$.

$(3)$ Consider any $x\in G\setminus G'$. For $y_1,y_2\in \C_G(x)$, by Lemma
\ref{lem1}, $[y_1,y_2]\in \C_G(x)\cap G'=Z(G)$. Hence
$[\C_G(x),\C_G(x)] \leq \Z(G)$. Since $G$ is of nilpotency class $3$, we have
$$[\C_G(x)G', \C_G(x)G'] = [\C_G(x),\C_G(x)]\, [\C_G(x),G'] \, [G',G']\leq
\Z(G),$$
i.e. $\C_{\overline{G}}(\overline{x})=\overline{\C_G(x)G'}$ is abelian. Since
$\overline{G}$ is of exponent $p$, then so is $\C_{\overline{G}}(\overline{x})$.

$(4)$ It is given that $A=\C_{\overline{G}}(\overline{x})$ and
$B=\C_{\overline{G}}(\overline{y})$ are
distinct proper subgroups of  $\overline{G}$, and are abelian by assertion (3).
Thus for any element $\overline{w} \in A \cap B$, it follows that
$|\C_{\overline{G}}(\overline{w})| \ge |AB| > p^{2m}$. Hence, by (3),
$\overline{w} \in \Z(\overline{G})$. Since $|\overline{G}|=p^{3m}$,
$|\Z(\overline{G})|=p^m$ and
$|A|=|B| = p^{2m}$,  we have  $\overline{G} = AB$.

$(5)$ The assertion follows by $(2)$-$(4)$ along with  Theorem \ref{prop8}.
\end{proof}

Using Theorem \ref{lem7}(1), the following result is an easy exercise.
\begin{cor}\label{se3-cor1}
Let $G$ satisfy Hypothesis (A1) and $u, v \in G \setminus G'$ such that $[u,v]
\in \Z(G)$. Then there exists an element $h \in G'$ such that $[u,vh] = 1$.
\end{cor}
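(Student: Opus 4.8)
The plan is to translate the hypothesis into the quotient $\overline{G} = G/\Z(G)$ and use the structural description of centralizers supplied by Theorem \ref{lem7}(1). Since $[u,v] \in \Z(G)$, we have $[\overline{u}, \overline{v}] = 1$ in $\overline{G}$, i.e. $\overline{v} \in \C_{\overline{G}}(\overline{u})$. By Theorem \ref{lem7}(1), $\C_{\overline{G}}(\overline{u}) = \overline{\C_G(u)G'}$. Hence $\overline{v} = \overline{wg}$ for some $w \in \C_G(u)$ and $g \in G'$; that is, $v \equiv wg \pmod{\Z(G)}$, so $v = wgz$ for some $z \in \Z(G)$. Writing $h := gz \in G'$ (recall $\Z(G) \le G'$), we get $v = wh$ with $w \in \C_G(u)$, equivalently $vh^{-1} \in \C_G(u)$, so $[u, vh^{-1}] = 1$.

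The only remaining point is cosmetic: the statement asks for an element $h \in G'$ with $[u, vh] = 1$, whereas the argument above produces $h^{-1} \in G'$. Since $G'$ is a subgroup, $h^{-1}$ ranges over all of $G'$ as $h$ does, so after renaming we obtain the desired conclusion. (Alternatively one can simply observe that $\Z(G)G' = G'$ is invariant under inversion and relabel at the outset.)

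There is essentially no obstacle here: the work was already done in Theorem \ref{lem7}(1), which identifies $\C_{\overline{G}}(\overline{u})$ with the image of $\C_G(u)G'$ in $\overline{G}$. The corollary is just the pointwise reading of that identity, lifted back from $\overline{G}$ to $G$ modulo the center. The one thing to be careful about is that lifting an equality of cosets $\overline{v} = \overline{wg}$ only gives equality in $G$ up to a central factor, but since that central factor can be absorbed into the $G'$-part (as $\Z(G) \le G'$ under Hypothesis (A1)), nothing is lost.
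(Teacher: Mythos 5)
Your argument is correct and is exactly the ``easy exercise'' the paper has in mind: it reads the corollary off the identity $\C_{\overline{G}}(\overline{u})=\overline{\C_G(u)G'}$ from Theorem \ref{lem7}(1), lifting $\overline{v}\in\C_{\overline{G}}(\overline{u})$ back to $G$ and absorbing the central factor into $G'$ via $\Z(G)\le G'$. The only point worth flagging is that Theorem \ref{lem7} is stated under Hypothesis (A2), while the corollary assumes (A1); this is harmless since (A1) forces $n=2m$ by Theorem \ref{lem6} (with Lemma \ref{lem4}), so (A2) holds, a step the paper itself leaves implicit.
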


As a consequence of Hall-Witt identity, in a $p$-group $G$ of nilpotency class
$3$, one can deduce that
\begin{equation}\label{eqse3-5a}
\mbox{ If } [a,b]\in \Z(G) \mbox{ then } [[a,t],b]=[[b,t],a]
\hskip5mm (a,b,t \in G).
\end{equation}


\begin{lemma}\label{lem9}
Let $G$ satisfy Hypothesis (A2). Then $\Z(G)=\gamma_3(G)$, and is elementary
abelian of order $p^{2m}$.
\end{lemma}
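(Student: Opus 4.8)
The plan is to show the two inclusions $\gamma_3(G) \le \Z(G)$ and $\Z(G) \le \gamma_3(G)$ separately, and then read off the order from the invariants already established. The first inclusion is immediate: since $G$ has nilpotency class $3$, we have $\gamma_4(G) = 1$, so $\gamma_3(G) = [\gamma_2(G), G]$ is central. For the order, note that by Lemma \ref{lem4} and Theorem \ref{lem6} we have $[G:G'] = p^{2m}$ and $[G':\Z(G)] = p^m$. Since $G'$ is elementary abelian (Theorem \ref{ishikawa2}) and $\overline{G} = G/\Z(G)$ is isomorphic to $\mathrm{U}_3(p^m)$ (Theorem \ref{lem7}(5)), the derived subgroup $\overline{G}'$ has order $p^m$; hence $|G'| = |G'/\Z(G)|\cdot|\Z(G)| = p^m |\Z(G)|$. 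To finish, it suffices to prove $\Z(G) \le \gamma_3(G)$, because then $\Z(G) = \gamma_3(G)$, and combining $\gamma_3(G) = [G', G]$ with $|G'/\Z(G)| = p^m$ and $[G:G'] = p^{2m}$ — together with the fact that $\overline{G} \cong \mathrm{U}_3(p^m)$ has $\gamma_3(\overline{G}) = 1$, i.e. $[G', G] \le \Z(G)$ is forced while the image $\overline{G}'$ of $G'$ is central in $\overline{G}$ — will pin down $|\Z(G)| = p^{2m}$ via a dimension count of the commutator pairing $G'/\Z(G) \times G/G' \to \gamma_3(G)$.

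The heart of the argument is the reverse inclusion $\Z(G) \le \gamma_3(G)$, or equivalently $\Z(G) = [G',G]$. The natural approach is to exploit the bilinear map $\beta : \overline{G}' \times \overline{G}'' \to \gamma_3(G)$ induced by commutation, where $\overline{G}'' := G/G'$; concretely, for $h \in G'$ and $x \in G$, the value $[h,x]$ depends only on $h\Z(G)$ and $xG'$ because $\Z(G) \le \Z_2(G)$ and $[G', G'] = 1$. Since $\overline{G} \cong \mathrm{U}_3(p^m)$ is a Camina group (Theorem \ref{lem7}(2)), for each noncentral $\overline{x}$ one has $[\overline{G}, \overline{x}] = \overline{G}'$, which lifts to the statement that $[G, x] G'$ covers $G'/\Z(G)$ fully, so the pairing $\beta$ is "nondegenerate on the left" in a suitable sense. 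I would then show that fixing a noncentral $h \in G'$ and letting $x$ range over $G$, the set $\{[h,x] : x \in G\} = [G,h]$ is a subgroup of $\Z(G)$ (it lies in $\gamma_3(G) \le \Z(G)$, hence is a subgroup by the remark in the notation section), and that as $h$ ranges over a transversal of $\Z(G)$ in $G'$ these subgroups generate all of $\Z(G)$. The cleanest route: pick a minimal generating set $\{x_1, \dots, x_{2m}\}$ of $G$ (possible since $[G:G'] = p^{2m}$ and $G' = \Phi(G)$ here), note $\gamma_3(G) = \langle [h, x_i] : h \in G', 1 \le i \le 2m\rangle$, and count the $\mathbb{F}_p$-dimension of $\gamma_3(G)$ as the rank of the matrix of commutators — using that $\mathrm{U}_3(p^m)$'s commutator structure forces this rank to be $2m$.

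The step I expect to be the main obstacle is establishing that $\dim_{\mathbb{F}_p} \gamma_3(G) \ge 2m$, i.e. that $\gamma_3(G)$ is not too small; the inclusion $\gamma_3(G) \le \Z(G)$ and the bound $|\Z(G)| = p^{2m}$ give the ceiling, but the floor requires genuinely using the $\mathrm{U}_3(p^m)$-structure of $\overline{G}$ rather than just its order. I would handle this by transporting the known commutator identities in $\mathrm{U}_3(p^m)$: in $\overline{G}$ there exist $\overline{a}, \overline{b}$ with $\overline{G}' = \langle [\overline{a}, \overline{b}]\rangle$-type generation over $\mathbb{F}_{p^m}$, and for a generator $h$ of a complement direction in $G'/\Z(G)$ the commutators $[h, a_j]$ (where $a_j$ runs over an $\mathbb{F}_p$-basis realizing the $\mathbb{F}_{p^m}$-scalar action) must be $\mathbb{F}_p$-independent in $\Z(G)$ — otherwise some nontrivial $\mathbb{F}_p$-combination of the $a_j$ would centralize $h$ modulo $\Z(G)$, contradicting that in $\mathrm{U}_3(p^m)$ the centralizer of a noncentral element of $\overline{G}'$ is exactly $\overline{G}'$ (which follows from Theorem \ref{lem7} applied to elements of $G'$: recall $\C_G(h) = G'$ for $h \in G' \setminus \Z(G)$). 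Once that independence is in hand, varying over all $2m$ generating directions and using nondegeneracy of $\beta$ yields $\dim \gamma_3(G) \ge 2m$, forcing equality throughout and completing the proof. A secondary technical point to watch is making the well-definedness of $\beta$ and the subgroup property of $[G,h]$ fully rigorous via \eqref{eqse3-5a} and Lemma \ref{lem1}, but these are routine given the machinery already set up.
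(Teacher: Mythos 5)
Your reduction and your ``floor'' are fine: $\gamma_3(G)\le \Z(G)$ is immediate from class $3$, and for a fixed $h\in G'\setminus\Z(G)$ the equality $\C_G(h)=G'$ together with the centrality of the values $[h,x]$ shows that the $2m$ commutators $[h,x_i]$, for a minimal generating set $\{x_1,\dots,x_{2m}\}$, are independent, so $|\gamma_3(G)|\ge p^{2m}$; this is exactly how the paper produces its subgroup $K$ of order $p^{2m}$. The gap is everything after that, and it is where the actual content of the lemma lies. Your ``ceiling'' is circular: in your last paragraph you invoke ``the bound $|\Z(G)|=p^{2m}$'' as known, but nothing proved before Lemma \ref{lem9} bounds $|\Z(G)|$ at all --- only $[G:G']=p^{2m}$ and $[G':\Z(G)]=p^m$ are available (Lemma \ref{lem4}, Theorem \ref{lem6}), which leave $|\Z(G)|$, and hence $|G'|$, completely undetermined; $|\Z(G)|=p^{2m}$ is part of the conclusion. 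Likewise the claim that ``${\rm U}_3(p^m)$'s commutator structure forces this rank to be $2m$'' cannot work as stated: the pairing $G'/\Z(G)\times G/G'\to\gamma_3(G)$ takes values inside $\Z(G)$, which is invisible in $\overline G=G/\Z(G)\cong {\rm U}_3(p^m)$, so the structure of $\overline G$ gives a priori only $\dim\gamma_3(G)\le 2m^2$ (one generator $[h_i,x_j]$, $[h_i,y_j]$ per pair of basis vectors), not $\le 2m$; and nondegeneracy of the pairing does not improve this.

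Concretely, two statements are asserted but never argued in your proposal. First, $\gamma_3(G)\le K$, i.e.\ that for every $h\in G'\setminus\Z(G)$ the commutators $[h,x_i]$, $[h,y_i]$ already lie in the subgroup generated by the $[h_1,x_i]$, $[h_1,y_i]$ coming from the single element $h_1=[x_1,y_1]$; the paper gets this by writing $h\equiv[x_1,y]$ and $[x_i,y]\equiv[x,y_1]\pmod{\Z(G)}$ and applying the Hall--Witt consequence \eqref{eqse3-5a} twice. Second, and harder, $\Z(G)\le\gamma_3(G)$: your plan ``the subgroups $[G,h]$ generate all of $\Z(G)$'' is just a restatement of this inclusion, since those subgroups generate $\gamma_3(G)$ by definition, and no mechanism is offered for it. The paper proves it by contradiction: if some $z\in\Z(G)\setminus\gamma_3(G)$ appears in $[x_i,y_j]=[x_1,y_1]^{e_1}\cdots[x_1,y_m]^{e_m}z$, then Corollary \ref{se3-cor1} supplies elements of $G'$ making $[x_1y_j,x_iyh]=1$ and $[y_j,yh_1]=1$, and a computation modulo $\gamma_3(G)$ forces $z\in\gamma_3(G)$. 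Neither step is a routine dimension count; in fact the step you single out as the main obstacle (the lower bound $\dim\gamma_3(G)\ge 2m$) is the easy half, and without the two missing arguments your ``forcing equality throughout'' does not go through.
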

\begin{proof}
Since $G'$ is elementary abelian (by Theorem \ref{ishikawa2}), so are
$\gamma_3(G)$ and  $\Z(G)$ (since $\Z(G)\le G'$).
Consider $x_1\in G\setminus G'$. Recall that  $\C_G(x_1)\cap G'=\Z(G)$, $
[\C_G(x_1) : \Z(G)] = [G' : \Z(G)] = p^m$ and $[G : G'] = p^{2m}$.  Let
$\C_G(x_1)=\langle
x_1,\ldots,x_m,\Z(G)\rangle$. Consider $y_1\in G\setminus \C_G(x_1)G'$. Let
$\C_G(y_1)=\langle y_1,\ldots, y_m,\Z(G)\rangle$.
Then $\C_{\overline{G}}(\overline{x}_1)=\langle
\overline{x}_1,\ldots,\overline{x}_m,\overline{G}'\rangle$ and
$\C_{\overline{G}}(\overline{y}_1)=\langle
\overline{y}_1,\ldots,\overline{y}_m,\overline{G}'\rangle$ are distinct proper
subgroups of $\overline{G}$; hence, by Theorem \ref{lem7}(4), they generate
$\overline{G}$. It follows
that $\{ x_1,\ldots, x_m,y_1,\ldots,y_m\}$ is a (minimal) generating set for
$G$.
Define
\begin{align*}
[x_1,y_i] =h_i,
\hskip5mm 1\le i \le m.
\end{align*}
By Theorem \ref{lem7}(2), $h_1,\ldots, h_m$ are independent modulo
$\Z(G)$, and  $$G'=\langle h_1,\ldots, h_m,\Z(G)\rangle.$$

Next define
$$[h_1,x_i]=z_i,  \mbox{ and }  [h_1,y_i]=z_{m+i}, \hskip5mm 1 \le i \le m.$$
Since $x_1, \dots, x_m, y_1, \dots, y_m$ are independent modulo $G' =
\C_G(h_1)$,
it follows that
$$[h_1,x_1],\ldots, [h_1,x_m], [h_1,y_1],\ldots, [h_1,y_m]$$
 are independent, and they generate a subgroup $K$ of order $p^{2m}$ in
$\gamma_3(G)$.

We now proceed to show that $K=\gamma_3(G)$.
It is sufficient to show that for any $h\in
G'\setminus \Z(G)$, $[h,x_i],[h,y_i]\in K$ for $1 \le i \le m$.
For any $h\in G'\setminus \Z(G)$ and fixed $i$ with $1 \le i \le m$, consider
$[h,x_i]$. Let
$A=\langle x_1,\ldots, x_m,\Z(G)\rangle$ and $B=\langle
y_1,\ldots,y_m,\Z(G)\rangle$. Since $\langle [x_1,y_1], \dots, [x_1,y_m],  \Z(G)
\rangle=G'$,
there exists $y\in B$ such that $h \equiv
[x_1,y]\pmod{\Z(G)}$. Then by \eqref{eqse3-5a}
\begin{align*}[h,x_i] &=[[x_1,y],x_i]=[[x_i,y],x_1].\end{align*}
Again, since $\langle [x_1,y_1],\dots, [x_m,y_1], \Z(G) \rangle=G'$, there
exists
$x\in A$ such that $[x_i,y]\equiv [x,y_1] \pmod{\Z(G)}$. Therefore, by
\eqref{eqse3-5a}
\begin{align*}
[h,x_i]=[[x_i,y],x_1] =[[x,y_1],x_1]&=[[x_1,y_1],x]
=[h_1,x]\in K.
\end{align*}
Similarly we can show that $[h,y_i]\in K$, $1\le i\le m$; hence
$K=\gamma_3(G)$ and is of order $p^{2m}$.

It only remains to show that $\Z(G)=\gamma_3(G)$. For this, since
$[x_1,y_1],\ldots,
[x_1,y_m]$ are independent modulo $\Z(G)$ and
$$G'=\langle [x_1,y_1], \ldots, [x_1,y_m], \Z(G)\rangle,$$
it suffices to prove that
\begin{equation}\label{eqse3-6}
G'=\langle [x_1,y_1], \ldots, [x_1,y_m], \gamma_3(G)\rangle.
\end{equation}
Note that $\gamma_3(G)\subseteq \Z(G)$ and $\gamma_3(G)=\langle z_1,\ldots,
z_{2m}\rangle$. Also
$$A=\langle x_1,\ldots, x_m,\Z(G)\rangle=\C_G(x_1) \mbox{ and } B=\langle
y_1,\ldots,y_m,\Z(G)\rangle=\C_G(y_1).$$

 If \eqref{eqse3-6} does not hold, then there exist $z\in \Z(G)\setminus
\gamma_3(G)$ and a commutator $[x_i, y_j]$ for some $i,j$ with $1\leq i,j\leq
m$,  such that
\begin{equation*}
[x_i,y_j]=[x_1,y_1]^{e_1}\cdots [x_1,y_m]^{e_m}z,
\end{equation*}
where $e_i \in \mathbb{F}_p$ for $1 \le i \le m$.
Let $y=y_1^{e_1}\cdots y_m^{e_m}$. Then the preceding equation implies
\begin{equation}\label{eqse3-7}
[x_i,y_j]\equiv [x_1,y]z\pmod{\gamma_3(G)}.
\end{equation}
Consequently $[x_i,y_j]\equiv [x_1,y]\pmod{\Z(G)}$.  Since $[x_1,x_i]\equiv
[y,y_j]\equiv
1\pmod{\Z(G)}$, it follows that
$[x_1y_j, x_iy]\equiv[x_1,y][y_j,x_i]\equiv 1\pmod{\Z(G)}.$
Hence, by Corollary \ref{se3-cor1}, there exists $h\in G'$ such that $[x_1y_j,
x_iyh]=1$. Since $G/\gamma_3(G)$ is of nilpotency class $2$, using
\eqref{eqse3-7}, we get
\begin{align*}
1=[x_1y_j,
x_iyh] &\equiv [x_1,x_i][x_1,y][x_i,y_j]^{-1}[y_j,y]\pmod{\gamma_3(G)}\\
&\equiv z^{-1}[y_j,y]\pmod{\gamma_3(G)}.
\end{align*}
Since $y_j,y\in \C_G(y_1)$, we have $\overline{y}_i,\overline{y}\in
\C_{\overline{G}}(\overline{y_1})$,
which is abelian by Theorem \ref{lem7}(3); hence
$[\overline{y_j},\overline{y}]=1$, i.e. $[y_j,y]\in
\Z(G)$.
Again by Corollary \ref{se3-cor1}, there exists $h_1\in G'$ such that
$[y_j,yh_1]=1$. Then
$[y_j,y]=[y_j,h_1]^{-1}\in\gamma_3(G)$. Thus, we get
$$1\equiv z^{-1} \pmod{\gamma_3(G)},$$
a contradiction.

This proves that \eqref{eqse3-6} holds. Hence
$\gamma_3(G)=\Z(G)$, and the proof is complete.
\end{proof}


\section{Examples}\label{example}

In this section, we describe the examples of $p$-groups of conjugate type $(1,
p^{2m})$ and class $3$ from the construction by  Dark and Scoppola
\cite{Dark-Scoppola}.

For an odd prime $p$ and an integer $m \ge 1$, let $q=p^m$ and $\mathbb{F}_{q}$
denote the field of
order $q$. Consider the set $\mathcal{G}$ of quintuples $(a,b,c,d,e)$ over
$\mathbb{F}_q$. Define an operation `.' on $\mathcal{G}$ as follows.  For any
two quintuples $(a,b,c,d,e)$ and $(x,y,z,u,v)$, define $(a,b,c,d,e).(x,y,z,u,v)$
to be  the quintuple
\begin{align*}
(a+x,b+y,c+z+bx, d+u+az+(ab-c)x,e+v+cy+b(xy-z).
\end{align*}
A routine  check shows that $\mathcal{G}$ is a group under this operation, with
identity element $(0,0,0,0,0)$, which we denote by ${\mathbf 0}$, and
$$(a,b,c,d,e)^{-1}=(-a,-b,-c+ab,-d,-e).$$
Then $(a,b,c,d,e).(x,y,z,u,v).(a,b,c,d,e)^{-1}.(x,y,z,u,v)^{-1}$  is the
quintuple
\begin{align*}
(0,0,bx-ay,2az-2cx+a^2y-bx^2,  2(c-ab)y-2b(z-xy)-ay^2+b^2x).
\end{align*}
It is easy to see that
\begin{enumerate}
\item $\mathcal{G}'=\{ (0,0,c,d,e) \mid c,d,e\in\mathbb{F}_q\}$.
\item $\gamma_3(\mathcal{G})=\{
(0,0,0,d,e) \mid d,e\in\mathbb{F}_q\} = \Z(\mathcal{G})$.
\end{enumerate}

Consider $(0,0,c,d,e)\in\mathcal{G}'\setminus \Z(\mathcal{G})$. Then $c\neq 0$.
If
$$[(0,0,c,d,e),(x,y,z,u,v)]={\mathbf 0},$$
then by the commutator formula above, $-2cx=2cy=0$. Since characteristic of
$\mathbb{F}_q$ is odd and $c\neq 0$, $x=y=0$. Noting that $\mathcal{G}'$ is
abelian, it follows that the centralizer of any element of
$\mathcal{G}'\setminus \Z(\mathcal{G})$
is $\mathcal{G}'$, which has index $q^2=p^{2m}$ in $\mathcal{G}$.

 Next fix
$g=(a,b,c,d,e)$ with $(a,b)\neq (0,0)$. Then $(x,y,z,u,v)$ centralizes $g$ if
and only if
\begin{align}
 & bx-ay=0,\label{eqse4-1}\\
& 2az-2cx+a^2y-bx^2=0,\label{eqse4-2}\\
 & 2(c-ab)y-2b(z-xy)-ay^2+b^2x=0.\label{eqse4-3}
\end{align}
Suppose $a\neq 0$. For arbitrary $x,u,v\in\mathbb{F}_q$, we see that $y$ is
uniquely determined from \eqref{eqse4-1} and then $z$ is uniquely determined
from \eqref{eqse4-2}. Further the values of $y,z$ obtained satisfy
\eqref{eqse4-3}. Hence the centralizer of $(a,b,c,d,e)$ has order $q^3=p^{3m}$,
and therefore has index $p^{2m}$ in $\mathcal{G}$.  Similarly, if $a=0$ and $b
\neq 0$, then it follows that the centralizer of $(a,b,c,d,e)$ in $\mathcal{G}$
has index $p^{2m}$. Hence $\mathcal{G}$
is of conjugate type $(1, p^{2m})$.

We  show that the group $\mathcal{G}$  has a nice
description in terms of a matrix group  over $\mathbb{F}_q$. Consider the
following collection of  unitriangular matrices over $\mathbb{F}_q$:
$$\mathcal{H}_m=
\begin{Bmatrix}
\begin{bmatrix}
1 &      &   &   &   \\
a & 1    &   &   &   \\
c & b    & 1 &   &   \\
d & ab-c & a & 1 &   \\
f & e    & c & b & 1 \\
\end{bmatrix}: a,b,c,d,e,f\in\mathbb{F}_q\end{Bmatrix}.
$$
It is easy to see that $\mathcal{H}_m$ is a {\it subgroup} of ${\rm U}_5(q)$.
Denote the general element  of $\mathcal{H}_m$ by $(a,b,c,d,e,f)$. An easy
computation shows that
$\Z(\mathcal{H}_m) = \{(0,0,0,0,0,f) \mid f \in \mathbb{F}_q\}$. Therefore, we
have a natural homomorphism $\mathcal{H}_m\rightarrow
\mathcal{H}_m/\Z(\mathcal{H}_m)$, in which we identify
$$
(a,b,c,d,e,f)\Z(\mathcal{H}_m) \longleftrightarrow (a,b,c,d,e).
$$
Then one can check that the product $(a,b,c,d,e)(x,y,z,u,v)$ in
$\mathcal{H}_m/\Z(\mathcal{H}_m)$ is the same  as $(a,b,c,d,e).(x,y,z,u,v)$ in
$\mathcal{G}$. Hence $\mathcal{H}_m/\Z(\mathcal{H}_m)$ is isomorphic to
$\mathcal{G}$, and therefore is of conjugate type $(1,p^{2m})$ and class $3$.

As a conclusion of the preceding discussion, we obtain

\begin{thm}\label{se4-thm1}
For any even integer $n \ge 1$ and an odd prime $p$, there exists a finite
$p$-group of nilpotency class $3$ and of conjugate type $(1, p^{n})$.
\end{thm}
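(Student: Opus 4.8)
The plan is to exhibit explicitly, for every even $n$, a finite $p$-group of nilpotency class $3$ and conjugate type $(1, p^n)$; the group $\mathcal{H}_m / \Z(\mathcal{H}_m)$ with $n = 2m$ is the natural candidate, and essentially all of the verification has already been carried out in the discussion preceding the statement. So the proof is really a matter of assembling the pieces. First I would write $n = 2m$ and introduce $q = p^m$ and the group $\mathcal{G}$ of quintuples over $\mathbb{F}_q$ with the stated multiplication, noting that the routine associativity/inverse checks have been done. Then I would record from the commutator formula the two structural facts $\mathcal{G}' = \{(0,0,c,d,e)\}$ and $\gamma_3(\mathcal{G}) = \Z(\mathcal{G}) = \{(0,0,0,d,e)\}$, so that $\mathcal{G}$ has nilpotency class exactly $3$ (since $\gamma_3(\mathcal{G}) \ne 1$).

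Next I would verify the conjugate type. For an element $(0,0,c,d,e) \in \mathcal{G}' \setminus \Z(\mathcal{G})$ one has $c \ne 0$, and the commutator formula forces $x = y = 0$ on any centralizing quintuple (here the hypothesis $p > 2$ is used to divide by $2c$); since $\mathcal{G}'$ is abelian, the centralizer is exactly $\mathcal{G}'$, of index $q^2 = p^{2m}$. For $g = (a,b,c,d,e)$ with $(a,b) \ne (0,0)$, the centralizing conditions are the three equations \eqref{eqse4-1}–\eqref{eqse4-3}; in the case $a \ne 0$ one checks that $x, u, v$ are free, $y$ is then determined by \eqref{eqse4-1}, $z$ by \eqref{eqse4-2}, and the resulting values satisfy \eqref{eqse4-3}, giving a centralizer of order $q^3$, hence index $p^{2m}$; the case $a = 0$, $b \ne 0$ is symmetric. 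Thus every non-central element has conjugacy class size exactly $p^{2m}$, so $\mathcal{G}$ is of conjugate type $(1, p^{2m})$.

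Finally I would transfer this to the matrix picture: the map sending a quintuple to the unitriangular matrix displayed in $\mathcal{H}_m$ identifies $\mathcal{G}$ with $\mathcal{H}_m/\Z(\mathcal{H}_m)$, since $\Z(\mathcal{H}_m) = \{(0,0,0,0,0,f)\}$ and the induced product on cosets agrees with the multiplication in $\mathcal{G}$; this confirms that $\mathcal{H}_m$ is a genuine group (a subgroup of $\mathrm{U}_5(q)$) realizing the quotient, and gives the concrete representative asserted in the Main Theorem. Since $\mathcal{G} \cong \mathcal{H}_m/\Z(\mathcal{H}_m)$ is of class $3$ and conjugate type $(1, p^{2m}) = (1, p^n)$, the statement follows. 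The only place requiring any real care — and hence the main obstacle — is the centralizer count in the case $(a,b) \ne (0,0)$: one must check that the values of $y$ and $z$ forced by \eqref{eqse4-1} and \eqref{eqse4-2} are genuinely consistent with \eqref{eqse4-3}, i.e. that no further constraint is imposed, so that the centralizer has the full order $q^3$ rather than something smaller; this is a direct but slightly tedious substitution in $\mathbb{F}_q$, again using $p$ odd.
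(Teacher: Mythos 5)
Your proposal is correct and follows essentially the same route as the paper: the paper's proof of this theorem is precisely the discussion in Section~\ref{example}, where the quintuple group $\mathcal{G}$ is constructed, its commutator structure and centralizers are computed to show class $3$ and conjugate type $(1,p^{2m})$, and $\mathcal{G}$ is identified with $\mathcal{H}_m/\Z(\mathcal{H}_m)$. Your emphasis on checking consistency of equation \eqref{eqse4-3} with the values forced by \eqref{eqse4-1} and \eqref{eqse4-2} matches the verification the paper itself performs.
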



\section{Proof of Main Theorem}
Let $G$ be a $p$-group of nilpotency class $3$ such  that $G/\Z(G)$ is
isomorphic to the group ${\rm U}_3(p^m)$. Our strategy for proving Main Theorem is to obtain
presentations of groups $G$ satisfying Hypothesis (A2) from a presentation of
${\rm U}_3(p^m)$; then we proceed to show that the groups, given by the
presentations obtained, belong to the same isoclinism family.

We start with finding some structure constants of ${\rm U}_3(p^m)$.
Let $\mathbb{F}_{p^m}$ denote the field of order $p^m$. Then
$\mathbb{F}_{p^m}=\mathbb{F}_p(\alpha)$, where $\alpha$ satisfies a monic
irreducible polynomial of degree $m$ over $\mathbb{F}_p$.
Consider the following matrices in ${\rm U}_3(p^m)$ for any integer $i \ge 1$:
$$X_i=
\begin{bmatrix}
1 &              & \\
0 &  1           & \\
0 & \alpha^{i-1} & 1
\end{bmatrix},
\hskip5mm
Y_i=
\begin{bmatrix}
1 &              & \\
\alpha^{i-1} &  1  & \\
0 & 0 & 1
\end{bmatrix},
\hskip5mm
H_i=
\begin{bmatrix}
1 &              & \\
0 &  1           & \\
\alpha^{i-1} & 0 & 1
\end{bmatrix}.
$$
Then it is easy to see that
$\{X_1,\ldots, X_m, Y_1,\ldots, Y_m\}$ is a minimal
generating set for
${\rm U}_3(p^m)$ and $\{H_1,\ldots, H_m\}$ is a minimal
generating set for the center (as well as the commutator subgroup) of ${\rm
U}_3(p^m)$. Further, these matrices satisfy the following
relations.
\begin{align}
& X_i^p=Y_i^p=H_i^p=1,\label{eqse5-1}\\
& [X_i,X_j]=[Y_i,Y_j]=1,\label{eqse5-2}\\
& [H_i,X_j]=[H_i,Y_j]=1,\label{eqse5-3}\\
& [X_i,Y_j]=H_{i+j-1} \mbox{ for all }i,j\geq 1.\label{eqse5-4}
\end{align}

Since $\mathbb{F}_p(\alpha)$ is a vector space over $\mathbb{F}_p$ with basis
$(1,\alpha,\ldots, \alpha^{m-1})$, for $\alpha^{i+j-2} \in\mathbb{F}_p(\alpha)$,
there exist unique
$\kappa_{i,j,1}, \ldots, \kappa_{i,j,m}\in\mathbb{F}_p$ such that
$$\alpha^{i+j-2} = \kappa_{i,j,1} + \kappa_{i,j,2}\,\alpha + \cdots +
\kappa_{i,j,m}\,\alpha^{m-1}.$$
Then, by \eqref{eqse5-4}, we have
$$H_{i+j-1} = H_1^{\kappa_{i,j,1}}H_2^{\kappa_{i,j,2}}\cdots
H_m^{\kappa_{i,j,m}}=
[X_1,Y_1 ^{\kappa_{i,j,1}} Y_2^{\kappa_{i,j,2}} \cdots Y_m^{\kappa_{i,j,m}}],$$
which in turn  implies that
\begin{align}\label{eqse5-5}
[X_i,Y_j]=[X_1,Y_1 ^{\kappa_{i,j,1}} Y_2^{\kappa_{i,j,2}} \cdots
Y_m^{\kappa_{i,j,m}}], \hskip5mm 1\leq i,j\leq m.
\end{align}

The constants $\kappa_{i,j,l}$ for $1\leq i,j,l\leq m$, which we call    {\it
the structure constants} of ${\rm U}_3(p^m)$, will be frequently used in the
remaining part of the paper. The generators $X_i,Y_i,H_i$ ($1\leq i\leq m$), the
constants $\kappa_{i,j,l}$, and the relations \eqref{eqse5-1} - \eqref{eqse5-5}
 give a presentation of the group ${\rm U}_3(p^m)$. From \eqref{eqse5-4}, it
follows  that $[X_i,Y_j]=[X_j,Y_i]$ for all $i,j \ge 1$, and so
\begin{equation}\label{eqse5-6}
\kappa_{i,j,l}=\kappa_{j,i,l}.
\end{equation}
Also, combining \eqref{eqse5-5} with the relation $[X_i,Y_j]=[X_j,Y_i]$,
we obtain
\begin{align}\label{eqse5-7}
[X_j,Y_i]=[X_1 ^{\kappa_{i,j,1}} X_2^{\kappa_{i,j,2}} \cdots
X_m^{\kappa_{i,j,m}},Y_1], \hskip5mm 1\leq i,j\leq m.
\end{align}

We now build up a presentation of a finite $p$-group $G$ such that $G/\Z(G)
\cong {\rm U}_3(p^m)$.

\begin{lemma}\label{lem10}
Fix a prime $p>2$ and an integer $m\ge 2$. Let $H$ be a $p$-group of
nilpotency class $3$ and of conjugate type $(1,p^{2m})$. Then there exists
$$\alpha_{i,j,l}, ~~ \beta_{i,j,l}, ~~ \gamma_{i,j,l}, ~~ \delta_{i,j,l}, ~~
\lambda_{i,j,l}, ~~ \mu_{i,j,l}, ~~ \epsilon_{i,l}, ~~ \nu_{i,l} \in
\mathbb{F}_p \hskip3mm (1\leq i, j \leq m, ~~ 1\leq l \le 2m),$$
such that $H$ is isoclinic to the group
$G$ admitting the following presentation:
\begin{align*}
 G=\Big{\langle}  &x_1,\ldots, x_m,y_1,\ldots, y_m, h_1, h_2,\ldots, h_m, z_1,
z_2, \ldots, z_{2m}~~ \Big{|} \\
 & x_i^p=\prod_{l=1}^{2m} z_l^{\epsilon_{i,l}}, ~~ y_i^p=\prod_{l=1}^{2m}
z_l^{\nu_{i,l}}, ~~ h_i^p=1 ~~ (1\leq i\leq m), ~~
z_i^p=1 ~~ (1\leq i\leq 2m),\tag{R0}\\
  & [z_k,z_r]=[z_k,x_i]=[z_k,y_i]=[z_k,h_i]=1 \hskip3mm (1\leq k,r,\leq 2m,
1\leq i\leq m), \tag{R1}\\
  & [h_i,h_j]=1 \hskip5mm (1\leq i,j\leq m),\tag{R2}\\
    & [h_i, x_j]=\prod_{l=1}^{2m} z_l^{\gamma_{i,j,l}} \hskip5mm (1\leq i,j\leq
m), \tag{R3}\\
  & [h_i, y_j]=\prod_{l=1}^{2m} z_l^{\delta_{i,j,l}} \hskip5mm (1\leq i,j\leq
m), \tag{R4}\\
  & [x_i,x_j]=\prod_{l=1}^{2m} z_l^{\alpha_{i,j,l}} \hskip5mm (1\leq i,j\leq m),
\tag{R5}\\
  & [y_i,y_j]=\prod_{l=1}^{2m} z_l^{\beta_{i,j,l}} \hskip5mm (1\leq i,j\leq m),
\tag{R6}\\
  & [x_i,y_j]=[x_1, y_1^{\kappa_{i,j,1}} y_2^{\kappa_{i,j,2}}\cdots
y_m^{\kappa_{i,j,m}}] \prod_{l=1}^{2m} z_l^{\lambda_{i,j,l}}   \hskip5mm (1\leq
i,j\leq m),\tag{R7}\\
  & [x_j,y_i]=[x_1^{\kappa_{i,j,1}} x_2^{\kappa_{i,j,2}} \cdots
x_m^{\kappa_{i,j,m}},y_1]
 \prod_{l=1}^{2m} z_l^{\mu_{i,j,l}}
  \hskip5mm (1\leq i,j\leq m), \tag{R8}\\
  & [x_1, y_i] = h_i, ~~[h_1, x_i] = z_i, ~~[h_1, y_i] = z_{m+i} \hskip3mm
(1\leq i\leq m) \tag{R9}
   \Big{\rangle},
  \end{align*}
  where   $\kappa_{i,j,l}$,  $1 \le i, j, l \le m$, are the
structure constants of ${\rm U}_3(p^m)$.
\end{lemma}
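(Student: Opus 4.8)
The plan is threefold: first, to replace $H$ by an isoclinic group satisfying Hypothesis (A2); second, to read off all eight families of constants from a suitably chosen generating set of that group, so that the relations (R0)--(R9) hold in it; and third, to show by an order count that the abstractly presented group $G$ is in fact \emph{isomorphic} to it, which is far more than isoclinism.

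For the reduction, Proposition \ref{prop2} provides a group $H_0$ in the isoclinism family of $H$ with $\Z(H_0)\le H_0'$; by Proposition \ref{prop1}, and since isoclinic groups have the same nilpotency class, $H_0$ is again of class $3$ and of conjugate type $(1,p^{2m})$, that is, $H_0$ satisfies Hypothesis (A2). Since isoclinism is an equivalence relation and isomorphic groups are isoclinic, it suffices to produce the constants and a presented group $G$ with $G\cong H_0$; so I assume from now on that $H$ itself satisfies Hypothesis (A2). Then Theorem \ref{lem7}(5) gives an isomorphism $\psi\colon{\rm U}_3(p^m)\to\overline{H}:=H/\Z(H)$, Lemma \ref{lem9} gives $\gamma_3(H)=\Z(H)$, elementary abelian of order $p^{2m}$, and Lemma \ref{lem4} with Theorem \ref{lem6} gives $[H:H']=p^{2m}$ and $[H':\Z(H)]=p^{m}$; in particular $|H|=p^{5m}$.

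Now, using the presentation of ${\rm U}_3(p^m)$ recorded above (generators $X_i,Y_i,H_i$, structure constants $\kappa_{i,j,l}$, relations \eqref{eqse5-1}--\eqref{eqse5-5}), I lift $\psi(X_i),\psi(Y_i)$ to elements $x_i,y_i\in H$; because $\Z(H)\le H'$, the set $\{x_1,\dots,x_m,y_1,\dots,y_m\}$ is a minimal generating set of $H$, and $H/H'$ is elementary abelian of order $p^{2m}$ with basis the classes of the $x_i$ and $y_i$. Set $h_i:=[x_1,y_i]$ and $z_i:=[h_1,x_i]$, $z_{m+i}:=[h_1,y_i]$ for $1\le i\le m$. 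Since $h_1\in H'\setminus\Z(H)$ we have $\C_H(h_1)=H'$ (Lemma \ref{lem4}), so $g\mapsto[h_1,g]$ is a homomorphism of $H$ onto $[H,h_1]\le\gamma_3(H)=\Z(H)$ with kernel $H'$; comparing orders forces $[H,h_1]=\Z(H)$, and since the classes of the $x_i,y_i$ form a basis of $H/H'$, the elements $z_1,\dots,z_{2m}$ form an $\mathbb{F}_p$-basis of $\Z(H)$. As $\overline{H}$ has exponent $p$ (Corollary \ref{cor1}) and, via $\psi$, satisfies \eqref{eqse5-1}--\eqref{eqse5-3}, each of $x_i^{p},y_i^{p},[x_i,x_j],[y_i,y_j],[h_i,x_j],[h_i,y_j]$ has trivial image in $\overline{H}$, hence lies in $\Z(H)=\langle z_1,\dots,z_{2m}\rangle$; and by \eqref{eqse5-5} and \eqref{eqse5-7}, so do $[x_i,y_j]\,[x_1,y_1^{\kappa_{i,j,1}}\cdots y_m^{\kappa_{i,j,m}}]^{-1}$ and $[x_j,y_i]\,[x_1^{\kappa_{i,j,1}}\cdots x_m^{\kappa_{i,j,m}},y_1]^{-1}$. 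Expanding all of these in the basis $\{z_l\}$ defines the constants $\epsilon_{i,l},\nu_{i,l},\alpha_{i,j,l},\beta_{i,j,l},\gamma_{i,j,l},\delta_{i,j,l},\lambda_{i,j,l},\mu_{i,j,l}\in\mathbb{F}_p$, and with them every relation (R0)--(R9) holds in $H$ (also using that $H'$ is abelian, since $H$ has class $3$, and that $H',\Z(H)$ are elementary abelian by Theorem \ref{ishikawa2}); so von Dyck's theorem gives a surjection $\pi\colon G\twoheadrightarrow H$ from the group $G$ with the stated presentation.

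To conclude, set $Z:=\langle z_1,\dots,z_{2m}\rangle\le G$; by (R1) it is central and by (R0) elementary abelian, so $|Z|\le p^{2m}$, while in $G/Z$ the images of $x_i,y_i,h_i$ satisfy the defining relations \eqref{eqse5-1}--\eqref{eqse5-5} of ${\rm U}_3(p^m)$ (reduce (R0)--(R9) modulo $Z$, killing the $z_l$ by (R1)) and generate $G/Z$; hence ${\rm U}_3(p^m)$ surjects onto $G/Z$, so $|G/Z|\le p^{3m}$. Therefore $|G|\le p^{5m}=|H|$, $\pi$ is an isomorphism, $G\cong H$, and $G$ is isoclinic to the original group. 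I expect the genuine work to be in the third paragraph: proving that the \emph{unmodified} lifts $x_i,y_i$ already satisfy every relation (R0)--(R9). The heart of it is that $z_1,\dots,z_{2m}$ really is a basis of $\Z(H)$ (which needs $\C_H(h_1)=H'$, the equality $\gamma_3(H)=\Z(H)$ of order $p^{2m}$ from Lemma \ref{lem9}, and the homomorphism property of $g\mapsto[h_1,g]$), together with the fact that $[x_i,y_j]$ and $[x_j,y_i]$ agree modulo $\Z(H)$ with the prescribed ``${\rm U}_3$-parts'' appearing in (R7) and (R8), by \eqref{eqse5-5} and \eqref{eqse5-7}. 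The order count is then routine.
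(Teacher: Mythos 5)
Your proposal is correct and follows essentially the same route as the paper: reduce via Propositions \ref{prop1} and \ref{prop2} to a group satisfying Hypothesis (A2), invoke Lemma \ref{lem4}, Theorem \ref{lem6}, Theorem \ref{lem7} and Lemma \ref{lem9} for the numerical invariants, and lift the presentation of ${\rm U}_3(p^m)$ through the quotient by the centre, reading the constants off a basis of $\Z(G)$. The only differences are refinements rather than a new approach: you define $h_i=[x_1,y_i]$ and the $z_l$ directly as commutators (where the paper adjusts arbitrary lifts modulo the centre and argues ``without loss of generality''), and you add the explicit von Dyck surjection together with the order count $|G|\le p^{2m}\cdot p^{3m}=|H|$ showing the presented group is actually isomorphic to the Hypothesis (A2) representative, a verification the paper leaves implicit.
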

\begin{proof}
By Propositions \ref{prop1} and \ref{prop2}, $H$ is isoclinc to a group $G$
satisfying hypothesis (A1). Then  by Lemma \ref{lem4}, Theorem \ref{lem6} and
Lemma \ref{lem9},
$$[G:G']=p^{2m}, ~~ [G':\Z(G)]=p^m, ~~ |\Z(G)|=p^{2m}.$$
The desired presentation of $G$ is obtained
from the presentation of ${\rm U}_3(p^m)$ $( \cong G/\Z(G)$) described just
before the lemma. Note that, by Theorem \ref{ishikawa2}, $G'$ is elementary
abelian.

Since $|\Z(G)|=p^{2m}$ and $\Z(G)$ is of exponent $p$, $\Z(G)$ is minimally
generated by $2m$ elements
$z_1,z_2,\ldots,
z_{2m}$ (say).
Let
$\varphi:G\rightarrow {\rm U}_3(p^m)$ denote a surjective homomorphism with
$\ker\varphi=\Z(G)$. Let $X_i$'s, $Y_i$'s and $H_i$'s be the generators of ${\rm
U}_3(p^m)$ considered in the discussion preceding the lemma.

Choose $ x_1,\ldots, x_m, y_1,\ldots, y_m, h_1,\ldots, h_m$
in $G$ such that
$$\varphi(x_i)=X_i, \hskip3mm \varphi(y_i)=Y_i\hskip3mm \mbox{ and } \hskip3mm
\varphi(h_i)=H_i\hskip3mm (1\leq i\leq m).$$
It follows that the set
$$\{x_1,\ldots, x_m,y_1,\ldots, y_m, h_1, h_2,\ldots, h_m, z_1,
z_2, \ldots, z_{2m}\}$$
generates $G$, and
$$\{h_1, h_2,\ldots, h_m, z_1, z_2, \ldots, z_{2m}\}$$
generates $G'$.

Since $\varphi(x_i^p)=1$, $1\le i\le m$, there exist some
$\epsilon_{i,l}\in\mathbb{F}_p$, $1\le l\le 2m$, such that $x_i^p=\prod_l
z_l^{\epsilon_{i,l}}$. Similarly there
exist some $\nu_{i,l}\in\mathbb{F}_p$,
$1\le l\le 2m$, such that $y_i^p=\prod_l z_l^{\nu_{i,l}}$, $1\le i\le m$.
Since $G'$ is elementary abelian, we get
$h_i^p=1$, $[h_i,h_j]=1$, $1\le i,j\le m$  and $z_i^p=1$, $1\le i\le 2m$. This
gives all the relations in (R0) and (R2). Since $z_i\in \Z(G)$, the relations
(R1) are the obvious commutator
relations of $z_i$'s with the other generators of $G$. The relations
\eqref{eqse5-2},  \eqref{eqse5-3}, \eqref{eqse5-5} and
\eqref{eqse5-7} of ${\rm U}_3(p^m)$ give the
relations (R3)-(R8) {\it for some} $\alpha$'s, $\beta$'s, $\gamma$'s,
$\delta$'s, $\lambda$'s and $\mu$'s in $\mathbb{F}_p$.

It remains to obtain relations (R9). Since, for $1 \le i \le m$,  $[X_1, Y_i]
= H_i$, by the choice of $x_1$, $y_i$
and $h_i$, we have
$$[x_1, y_i] \equiv h_i \pmod{\Z(G)}.$$ Thus $[x_1, y_i] = h_iw_i$ for some $w_i
\in \Z(G)$. Replacing $h_i$ by $h_iw_i$, which do not violate any of the
preceding relations, we can assume, without loss of generality, that
\begin{equation}\label{good-eq}
[x_1, y_i] = h_i, \hskip3mm 1 \le i \le m.
\end{equation}
Finally, we have $h_1\in G'\setminus \Z(G)$. Further,
$G$ is of conjugate type $(1,p^{2m})$, and $[G:G']=p^{2m}$ with
$G'$ abelian. Therefore $C_G(h_1)=G'$.
Since $x_1,\ldots, x_m, y_1,\ldots, y_m$ are independent modulo $C_G(h_1)=G'$,
it follows that
the $2m$ commutators
$$[h_1,x_1], \ldots, [h_1,x_m], [h_1,y_1], \ldots,
[h_1,y_m]$$
are independent, and they belong to $\gamma_3(G)=\Z(G)$, which is elementary
abelian of order $p^{2m}$.
Thus, without
loss of generality, we can take $[h_1,x_i]=z_i$  and $[h_1,y_i]=z_{m+i}$  for $1
\le i \le m$. This, along with \eqref{good-eq}, gives relations  (R9).
\end{proof}

\begin{thm}\label{se5-thm1}
Given an even integer $n=2m \ge 2$ and an odd prime $p$, there is a unique
finite $p$-group satisfying Hypothesis (A2), upto isoclinism. Moreover, such a group  is isoclinic to the group $\mathcal{H}_m/\Z(\mathcal{H}_m)$.
\end{thm}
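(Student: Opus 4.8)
The plan is to show that the structure constants appearing in the presentation of Lemma \ref{lem10} are, up to a change of generators, completely determined, so that any two groups satisfying Hypothesis (A2) with $G/\Z(G)\cong {\rm U}_3(p^m)$ are isoclinic. Since $\mathcal{H}_m/\Z(\mathcal{H}_m)$ also satisfies Hypothesis (A2) (by the computations in Section \ref{example}) and has central quotient ${\rm U}_3(p^m)$ (this is part of Theorem \ref{lem7}(5) applied to it, or a direct check), it suffices to prove that the isoclinism family of $G$ is independent of the constants $\alpha_{i,j,l},\beta_{i,j,l},\gamma_{i,j,l},\delta_{i,j,l},\lambda_{i,j,l},\mu_{i,j,l},\epsilon_{i,l},\nu_{i,l}$. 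Isoclinism is controlled only by the commutator map $a_G:\overline G\times\overline G\to G'$, so I would first reduce the list of constants by exploiting the freedom to replace each generator $x_i,y_i,h_i$ by itself times a central element, and to replace the $h_i$ and $z_l$ by other bases of $G'$ and $\Z(G)$ respectively; each such substitution changes the constants in a controlled affine way without changing the isoclinism type.

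The key steps, in order: (i) Use relation (R9) together with the Hall--Witt consequence \eqref{eqse3-5a} to express $[h_i,x_j]$ and $[h_i,y_j]$ (i.e.\ the $\gamma$'s and $\delta$'s) in terms of the $z_l$'s coming from $h_1$; since $h_i\equiv[x_1,y_i]\pmod{\Z(G)}$, the triple commutator $[h_i,x_j]=[[x_1,y_i],x_j]$ can be rewritten using \eqref{eqse3-5a} as a combination of the $[h_1,x_k],[h_1,y_k]=z_k,z_{m+k}$, exactly as in the proof of Lemma \ref{lem9}; this pins down $\gamma,\delta$ entirely in terms of the $\kappa$'s, so they are forced. (ii) Show the constants $\lambda_{i,j,l},\mu_{i,j,l}$ in (R7),(R8) can be absorbed: adjusting $x_i\mapsto x_i\cdot(\text{central})$, $y_i\mapsto y_i\cdot(\text{central})$ shifts $[x_i,y_j]$ by central elements, and using the already-normalized action of $h_1$ one can solve a linear system to kill the $\lambda$'s and $\mu$'s (or bring them to a canonical form); here one must check consistency, i.e.\ that the required central shifts for different pairs $(i,j)$ do not conflict — this uses $[x_i,y_j]=[x_j,y_i]$ type symmetry \eqref{eqse5-6} and the nondegeneracy built into ${\rm U}_3(p^m)$. (iii) Similarly absorb $\alpha_{i,j,l},\beta_{i,j,l}$ in (R5),(R6): since $[x_i,x_j]$ and $[y_i,y_j]$ lie in $\gamma_3(G)=\Z(G)$ and the $x$'s (resp.\ $y$'s) pairwise commute in ${\rm U}_3(p^m)$, one shows that after the substitutions of step (ii) these are already determined, or can be zeroed by a further central adjustment of the $x_i,y_i$, checking cocycle-type consistency. (iv) Finally deal with the power relations (R0): the exponents $\epsilon_{i,l},\nu_{i,l}$ do not enter the commutator map at all, hence are irrelevant to the isoclinism type — any two choices give isoclinic groups directly from the definition, since one can build $\phi:\overline G_1\to\overline G_2$ and $\theta:G_1'\to G_2'$ matching generators and the diagram commutes regardless of the $p$-th power data.

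Having normalized all constants to a canonical set depending only on $p$ and $m$ (the ${\rm U}_3(p^m)$ structure constants $\kappa$), I would conclude that any two groups satisfying Hypothesis (A2) are isoclinic to one fixed group $G_0$ with this canonical presentation, and that $\mathcal{H}_m/\Z(\mathcal{H}_m)$ is one such group, whence all are isoclinic to $\mathcal{H}_m/\Z(\mathcal{H}_m)$; combined with Corollary \ref{cor1}, Lemma \ref{lem4}, Theorem \ref{lem6}, Lemma \ref{lem9} and Theorem \ref{lem7}, which force $G/\Z(G)\cong {\rm U}_3(p^m)$ in the first place, this yields the theorem. The main obstacle will be step (ii)–(iii): verifying that the successive central adjustments of the generators used to absorb $\lambda,\mu,\alpha,\beta$ are mutually consistent — i.e.\ that the relevant $2$-cocycles are coboundaries — without accidentally reintroducing constants one has already eliminated; this is essentially a bookkeeping argument about the second cohomology $\operatorname{H}^2({\rm U}_3(p^m),\mathbb{F}_p^{2m})$ and the action of the generator-change group on it, and it is where the nilpotency-class-$3$ identity \eqref{eqse3-5a} and the explicit form of ${\rm U}_3(p^m)$ must be used carefully.
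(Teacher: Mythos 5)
Your steps (i) and (iv) coincide with the paper's Theorem \ref{lem14} and Lemma \ref{lem-final}, but steps (ii)--(iii), which carry the real weight, contain a genuine gap. First, the mechanism you propose cannot work as stated: replacing $x_i$ or $y_i$ by $x_iz$ with $z\in\Z(G)$ changes no commutator whatsoever, so ``central adjustments'' cannot shift $[x_i,y_j]$ at all. What does produce shifts is replacing $x_i$ by $x_ih$ with $h\in G'$, and this changes $[x_i,y_j]$ only by $[h,y_j]\in\gamma_3(G)$, subject to heavy constraints: modifying $x_1$, $y_1$ or the $y_j$ disturbs the normalizations already spent in (R9) and Lemma \ref{lem12} (e.g.\ $h_j=[x_1,y_j]$, $z_i=[h_1,x_i]$), and by Theorem \ref{lem14} the shifts of $[x_i,y_j]$ obtainable from $x$-side substitutions lie only in $\langle z_{m+1},\dots,z_{2m}\rangle$. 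You give no verification that the resulting linear system is solvable and consistent; you explicitly defer exactly this to ``cohomological bookkeeping,'' which is the whole difficulty.

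Second, and more fundamentally, your steps (ii)--(iii) never use the hypothesis that $G$ has conjugate type $(1,p^{2m})$; they invoke only the structure of ${\rm U}_3(p^m)$ and class-$3$ identities. But the statement you are aiming for --- that the isoclinism type of a class-$3$ central extension of ${\rm U}_3(p^m)$ by an elementary abelian group of rank $2m$ is independent of the commutator data $\lambda,\mu,\alpha,\beta$ --- is false without the conjugate-type hypothesis: different classes in $\Ho^2({\rm U}_3(p^m),\mathbb{F}_p^{2m})$ generally yield non-isoclinic extensions (the split one, for instance, has class $2$). The paper does not normalize these constants away; it proves they are \emph{uniquely forced} to specific, generally nonzero, values determined by the $\kappa_{i,j,l}$, and every one of those proofs (Lemmas \ref{lem17}--\ref{lem20}, Theorems \ref{se5-thm4} and \ref{se5-thm5}) hinges on Corollary \ref{se3-cor1} --- which rests on the conjugate-type hypothesis through Theorem \ref{lem7} --- applied to elements such as $x_1y_1^r$ and $xy^r$, varying $r\in\mathbb{F}_p$ and comparing the relations at $r=1$ and $r=-1$ after separating the $\langle z_1,\dots,z_m\rangle$ and $\langle z_{m+1},\dots,z_{2m}\rangle$ components. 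Without identifying where the conjugate-type hypothesis enters, your normalization plan cannot close; the ``main obstacle'' you flag at the end is in fact the entire proof.
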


Assuming the preceding theorem, we are now ready to present
\vspace{3mm}

\noindent {\it Proof of Main Theorem.}
For an integer $n\geq 1$ and a prime $p$, let $G$ be a finite $p$-group of
nilpotency class $3$ and of conjugate type $(1,p^n)$. In view of Propositions
\ref{prop1} and \ref{prop2}, we can assume that $\Z(G)\le G'$.
Then, by Theorem \ref{lem6},
$n$ is even. Conversely, if $n\geq 1$ is an even integer and $p$ is an odd
prime, then that there exists a finite $p$-group of nilpotency class $3$ and of
conjugate type $(1, p^n)$, follows from Theorem \ref{se4-thm1}. This proves the
first assertion of Main Theorem. The second one follows from Theorem
\ref{se5-thm1}. \hfill $\Box$

\vspace{3mm}

The rest of this section is devoted to the proof of Theorem \ref{se5-thm1}.
Throughout, for a prime $p$, and integer $m\ge 1$,   {\it $G$ (or more
precisely, $G(\alpha,\beta,\gamma,\delta,\lambda,\mu,\epsilon,\nu)$) always
denotes a
group of conjugate type $(1,p^{2m})$, admitting  the presentation as in
Lemma \ref{lem10}},  where $\alpha$'s,
$\beta$'s, $\gamma$'s, $\delta$'s, $\lambda$'s, $\mu$'s, $\epsilon$'s, and
$\nu$'s belong to $\mathbb{F}_p$. For $m=1$,
the Theorem \ref{se5-thm1} has
been proved by Ishikawa in \cite[Theorem 4.2]{Ishikawa1999}. Thus, assume
that $m\geq 2$.  Further, let the homomorphism $\varphi:G\rightarrow {\rm
U}_3(p^m)$,
$$x_i\mapsto X_i, \hskip5mm  y_i\mapsto Y_i, \mbox{ and  }h_i\mapsto H_i$$
be  as in the proof of Lemma \ref{lem10}.

\vspace{3mm}

First, we proceed to show that there is a unique
choice for $\alpha$'s,
$\beta$'s, $\gamma$'s, $\delta$'s, $\lambda$'s and $\mu$'s for which the group
$G$ considered in Lemma \ref{lem10} is of conjugate type $(1, p^{2m})$.
We start with simplifying the relations of  $G$ without any loss of generality.

\begin{lemma}\label{lem12}
We can choose generators $x_j$'s and $y_j$'s of $G$ such that
 $$[x_1,x_j]=[y_1,y_j]=1 \hskip5mm (j=2,\ldots, m).$$
\end{lemma}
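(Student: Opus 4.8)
The plan is to kill the commutators $[x_1,x_j]$, $2\le j\le m$, by replacing each $x_j$ with a suitable multiple $x_jg_j^{-1}$ where $g_j\in G'$; since $g_j\in G'$ this does not change the image of $x_j$ in $G/G'$, so $\{x_1,x_2g_2^{-1},\dots,x_mg_m^{-1},y_1,\dots,y_m\}$ remains a minimal generating set of $G$. The same device applied with $y_1$ in place of $x_1$ will then dispose of the $[y_1,y_j]$, and as those modifications involve no $x$'s, the two simplifications will not interfere.

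First I would work inside $A^{*}:=\langle x_1,\dots,x_m,h_1,\dots,h_m,\Z(G)\rangle$. By the relations (R1), (R2), (R5) together with $\gamma_3(G)=\Z(G)$ (Lemma~\ref{lem9}), every commutator of the listed generators of $A^{*}$ lies in $\Z(G)$, so $A^{*}$ has nilpotency class $\le 2$ with $(A^{*})'\subseteq\Z(G)$, and $G'=\langle h_1,\dots,h_m,\Z(G)\rangle\le A^{*}$. Hence $a\mapsto[x_1,a]$ is a homomorphism $A^{*}\to\Z(G)$; restricted to $G'$ its kernel is $\C_{G'}(x_1)=\Z(G)$ (the corollary following Lemma~\ref{lem4}), so $[x_1,G']\cong G'/\Z(G)$ has order $p^m$. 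On the other hand $\overline{A^{*}}\subseteq\C_{\overline G}(\overline x_1)$ by those same relations, while $|\overline{A^{*}}|=p^{2m}$ (it is generated by $\overline x_1,\dots,\overline x_m$, independent modulo $\overline{G'}$, together with $\overline{G'}=\langle\overline h_1,\dots,\overline h_m\rangle$) and $|\C_{\overline G}(\overline x_1)|=p^{2m}$ by Theorem~\ref{lem7}(1). Therefore $\overline{A^{*}}=\C_{\overline G}(\overline x_1)=\overline{\C_G(x_1)G'}$, whence $A^{*}=\C_G(x_1)G'$, $|A^{*}|=p^{4m}$, and $\C_G(x_1)\le A^{*}$. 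Since $|\C_G(x_1)|=p^{3m}$ (as $[\C_G(x_1):\Z(G)]=p^m$ and $|\Z(G)|=p^{2m}$) and $\C_G(x_1)\le\C_{A^{*}}(x_1)$, I get $|[x_1,A^{*}]|=[A^{*}:\C_{A^{*}}(x_1)]\le p^m$; combined with $[x_1,G']\subseteq[x_1,A^{*}]$ and $|[x_1,G']|=p^m$ this forces $[x_1,A^{*}]=[x_1,G']$.

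In particular $[x_1,x_j]\in[x_1,G']$ for $2\le j\le m$, so I can pick $g_j\in G'$ with $[x_1,g_j]=[x_1,x_j]$ and set $x_j':=x_jg_j^{-1}$. Because $[x_1,x_j]\in\Z(G)$ and $[x_1,g_j]\in\gamma_3(G)\subseteq\Z(G)$, the identity $[x_1,x_jg_j^{-1}]=[x_1,g_j^{-1}]\,[x_1,x_j]^{g_j^{-1}}=[x_1,g_j]^{-1}[x_1,x_j]$ gives $[x_1,x_j']=1$. Running the same argument with $y_1$, the subgroup $B^{*}:=\langle y_1,\dots,y_m,h_1,\dots,h_m,\Z(G)\rangle$ and $\C_{\overline G}(\overline y_1)$ in place of $\C_{\overline G}(\overline x_1)$ produces $g_j'\in G'$ with $[y_1,y_jg_j'^{-1}]=1$; replacing $y_j$ by $y_j':=y_jg_j'^{-1}$ finishes the proof and leaves the relations $[x_1,x_j']=1$ intact. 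The main obstacle is the order estimate $|[x_1,A^{*}]|\le p^m$: everything hinges on pinning down $|A^{*}|$ and $|\C_{A^{*}}(x_1)|$ exactly, and this is where Theorem~\ref{lem7}(1) (the uniform size of non-central centralizers in $\overline G\cong{\rm U}_3(p^m)$) does the real work.
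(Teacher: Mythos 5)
Your argument is correct, and its core is the same device the paper uses: adjust $x_j$ (resp.\ $y_j$) by an element of $G'$, which leaves the generating set intact and kills $[x_1,x_j]$ (resp.\ $[y_1,y_j]$). The difference is in how you produce the correcting element. The paper simply invokes Corollary \ref{se3-cor1}: since $[x_1,x_j]\in\Z(G)$ by (R5) and $x_1,x_j\notin G'$, there is $h_j\in G'$ with $[x_1,x_jh_j]=1$, and that is the whole proof. You instead re-derive the needed special case of that corollary from scratch, via the subgroup $A^{*}$, the commutator homomorphism $a\mapsto[x_1,a]$, and the order count $|[x_1,A^{*}]|\le p^{m}=|[x_1,G']|$ forced by $|\C_{\overline G}(\overline x_1)|=p^{2m}$ (Theorem \ref{lem7}(1)) and $|\Z(G)|=p^{2m}$, $\gamma_3(G)=\Z(G)$ (Lemma \ref{lem9}). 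This is sound, but it is a longer route to a fact the paper has already packaged: indeed, Theorem \ref{lem7}(1) gives $\C_{\overline G}(\overline x_1)=\overline{\C_G(x_1)G'}$ directly, so $\overline x_j\in\C_{\overline G}(\overline x_1)$ yields $x_j\in\C_G(x_1)G'$ (as $\Z(G)\le G'$), which is exactly the decomposition $x_j=c\,g_j$ with $[x_1,x_jg_j^{-1}]=1$ — no order computation with $A^{*}$ is needed. So your proof buys self-containedness at the cost of redoing, in a special case, work that Corollary \ref{se3-cor1} (an ``easy exercise'' from Theorem \ref{lem7}(1)) already does in general; your closing observation that the $y$-side replacements do not disturb the $x$-side relations matches the paper's implicit reasoning.
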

\begin{proof}
Fix $j$ with $2\le j\le m$. Since $x_1,x_j\notin G'$ and
$[x_1,x_j]\in \Z(G)$, by Corollary
\ref{se3-cor1},
there exists $h_j\in G'$ such that $[x_1,x_jh_j]=1$. Thus replacing $x_j$
by $x_jh_j$, if necessary, we can assume that $[x_1,x_j]=1$. Similarly, we can
choose $y_j$'s such that $[y_1,y_j]=1$.
\end{proof}

\begin{thm}\label{lem14}
In the relations (R3) and (R4),  for $1\leq i,j\leq m$, we have
$$
 [h_i,x_j]=[h_j,x_i]=z_1^{\kappa_{i,j,1}} z_2^{\kappa_{i,j,2}}\cdots
z_m^{\kappa_{i,j,m}} $$
and
$$ [h_i,y_j]=[h_j,y_i]=z_{m+1}^{\kappa_{i,j,1}}
z_{m+2}^{\kappa_{i,j,2}} \cdots
z_{2m}^{\kappa_{i,j,m}}.$$

In particular, $\gamma$'s and $\delta$'s (in the relations (R3) and
(R4)) are uniquely determined by the structure constants $\kappa_{i,j,l}$
of ${\rm
U}_3(p^m)$.
\end{thm}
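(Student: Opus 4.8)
The plan is to exploit the identity \eqref{eqse3-5a} together with the specific relations (R9) which say $[x_1,y_i]=h_i$ and $[h_1,x_i]=z_i$, $[h_1,y_i]=z_{m+i}$. The key observation is that every generator $h_i$ can be written as a commutator $[x_1,y_i]$, so a commutator of the form $[h_i,x_j]$ is really an iterated commutator $[[x_1,y_i],x_j]$, and \eqref{eqse3-5a} lets us move the innermost slot around. First I would compute $[h_i,x_j]=[[x_1,y_i],x_j]$. Since $[x_1,x_j]=1$ (by Lemma \ref{lem12}), \eqref{eqse3-5a} with $a=x_1$, $b=x_j$, $t=y_i$ gives $[[x_1,y_i],x_j]=[[x_j,y_i],x_1]$. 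Now using the presentation relation (R8), $[x_j,y_i]=[x_1^{\kappa_{i,j,1}}\cdots x_m^{\kappa_{i,j,m}},y_1]\prod_l z_l^{\mu_{i,j,l}}$, and since the $z_l$ are central and $G/\Z(G)$ has class $2$ so $\gamma_3$-level elements are central, taking the further commutator with $x_1$ kills the $z_l$-part and we get $[[x_j,y_i],x_1]=[\,[x_1^{\kappa_{i,j,1}}\cdots x_m^{\kappa_{i,j,m}},y_1]\,,x_1]$. Expanding the inner commutator modulo $\gamma_3(G)=\Z(G)$ as $\prod_s[x_s,y_1]^{\kappa_{i,j,s}}=\prod_s h_s^{\kappa_{i,j,s}}$ (using $[x_1,y_s]=[x_s,y_1]$, which holds here because the $\kappa$-relations match the relation $[X_i,Y_j]=[X_j,Y_i]$ in $\mathrm{U}_3(p^m)$, so $h_s\equiv[x_s,y_1]\pmod{\Z(G)}$), we obtain $[h_i,x_j]=\prod_s[h_s,x_1]^{\kappa_{i,j,s}}$.

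The next step is to evaluate $[h_s,x_1]$. This is the tricky normalization point: a priori (R9) only tells us $[h_1,x_i]=z_i$, not $[h_s,x_1]$ for $s>1$. But by the symmetric computation — running \eqref{eqse3-5a} the other way, or simply noting $[h_s,x_1]=[[x_1,y_s],x_1]$ and that $[x_1,y_1]=h_1$ — one gets $[h_s,x_1]=[[x_1,y_s],x_1]$, and by the same expansion as above with the roles of the indices $1$ and $s$ interchanged, $[h_s,x_1]=\prod_t[h_t,x_1]^{\kappa_{s,1,t}}$. Since $\kappa_{s,1,t}$ is the coefficient vector of $\alpha^{s-1}$ in the basis $(1,\alpha,\dots,\alpha^{m-1})$, this reads $[h_s,x_1]=\prod_t[h_t,x_1]^{(\text{$t$-th coord of }\alpha^{s-1})}$; feeding this back and using $[h_1,x_1]=z_1$ together with linear independence of the $z_l$, one deduces $[h_s,x_1]=z_s$ for all $s$ (this is where $\{H_1,\dots,H_m\}$ being a basis and the $z_i$ being independent in $\Z(G)$ is used — the map $s\mapsto[h_s,x_1]$ is forced to be the "same" $\mathbb{F}_p$-linear isomorphism as $s\mapsto z_s$). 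Substituting back, $[h_i,x_j]=\prod_{s=1}^m z_s^{\kappa_{i,j,s}}=z_1^{\kappa_{i,j,1}}\cdots z_m^{\kappa_{i,j,m}}$, which is manifestly symmetric in $i,j$ by \eqref{eqse5-6}, giving $[h_i,x_j]=[h_j,x_i]$ as claimed. The statement for $[h_i,y_j]$ follows by the entirely parallel argument: $[h_i,y_j]=[[x_1,y_i],y_j]$, and since $[y_i,y_j]=1$ one can instead write $h_i\equiv[x_i,y_1]\pmod{\Z(G)}$, so $[h_i,y_j]=[[x_i,y_1],y_j]=[[y_j,x_i],y_1]^{-1}\cdots$; more cleanly, use \eqref{eqse3-5a} with $a=y_1,b=y_j$ to reduce to $[h_s,y_1]$, and $[h_1,y_i]=z_{m+i}$ from (R9) forces $[h_s,y_1]=z_{m+s}$, yielding $[h_i,y_j]=z_{m+1}^{\kappa_{i,j,1}}\cdots z_{2m}^{\kappa_{i,j,m}}$.

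The conclusion about uniqueness of the $\gamma$'s and $\delta$'s is then immediate: comparing with (R3) and (R4) we read off $\gamma_{i,j,l}=\kappa_{i,j,l}$ for $1\le l\le m$ and $\gamma_{i,j,l}=0$ for $m<l\le 2m$, and $\delta_{i,j,l}=0$ for $1\le l\le m$, $\delta_{i,j,l}=\kappa_{i,j,l-m}$ for $m<l\le 2m$; so these are completely determined by the structure constants of $\mathrm{U}_3(p^m)$. I expect the main obstacle to be the bookkeeping in the step that pins down $[h_s,x_1]=z_s$: one must be careful that the "change of basis" $s\mapsto[h_s,x_1]$ is consistent with the previously-fixed choice $[h_1,x_1]=z_1$ and does not secretly require re-choosing the $z_i$, which would break relations (R9) already imposed. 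The resolution is that the $\kappa_{s,1,t}$ form the identity-to-power-basis change of basis matrix, which is invertible, so the system of equations $[h_s,x_1]=\prod_t [h_t,x_1]^{\kappa_{s,1,t}}$ has the unique solution forcing $[h_s,x_1]$ to be exactly the image of $z_s$; combined with $[h_1,x_1]=z_1$ this gives $[h_s,x_1]=z_s$ on the nose. Once that is clear, everything else is linear algebra over $\mathbb{F}_p$ and the displayed formulas drop out.
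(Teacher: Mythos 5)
Most of your computation tracks the paper's own argument: the first application of \eqref{eqse3-5a} (legitimate because $[x_1,x_j]=1$ by Lemma \ref{lem12}), the substitution of (R8), and the observation that central factors die under a further commutator are all exactly the paper's steps. But the step you yourself flag as the "tricky normalization point" is where your argument genuinely breaks. The system of equations $[h_s,x_1]=\prod_t[h_t,x_1]^{\kappa_{s,1,t}}$ is vacuous: by definition $\kappa_{s,1,t}$ is determined by $\alpha^{s+1-2}=\alpha^{s-1}=\sum_t\kappa_{s,1,t}\alpha^{t-1}$, and since $\alpha^{s-1}$ is itself a basis vector of $\mathbb{F}_p(\alpha)$ for $1\le s\le m$, we have $\kappa_{s,1,t}=\delta_{st}$. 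So your "change of basis matrix" is the identity and each equation reads $[h_s,x_1]=[h_s,x_1]$; it forces nothing. Even for a genuinely non-identity invertible matrix $A$, a fixed-point system $v_s=\prod_t v_t^{A_{st}}$ only confines the unknowns to a subspace and carries no information tying $[h_s,x_1]$ to the particular elements $z_s$, which were chosen via (R9) only through $[h_1,x_i]=z_i$. The same objection applies to your assertion that $[h_1,y_i]=z_{m+i}$ "forces" $[h_s,y_1]=z_{m+s}$.

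The missing ingredient is simply a second application of \eqref{eqse3-5a}, which is precisely how the paper closes the computation. Since $[x_s,x_1]=1$ (Lemma \ref{lem12}) and $[x_1,y_s]\equiv[x_s,y_1]\pmod{\Z(G)}$, one gets
$$[h_s,x_1]=[[x_s,y_1],x_1]=[[x_1,y_1],x_s]=[h_1,x_s]=z_s,$$
and dually $[h_s,y_1]=[[x_1,y_1],y_s]^{\pm1}$-type manipulation (using $[y_s,y_1]=1$) gives $[h_s,y_1]=[h_1,y_s]=z_{m+s}$. The paper in fact performs this swap at the level of the whole product, writing $[x_1^{\kappa_{i,j,1}}\cdots x_m^{\kappa_{i,j,m}},y_1,x_1]=[x_1,y_1,x_1^{\kappa_{i,j,1}}\cdots x_m^{\kappa_{i,j,m}}]=[h_1,x_1^{\kappa_{i,j,1}}\cdots x_m^{\kappa_{i,j,m}}]$ and reading the answer off (R9), which avoids introducing the intermediate quantities $[h_s,x_1]$ altogether. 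With that repair your expansion $[h_i,x_j]=\prod_s[h_s,x_1]^{\kappa_{i,j,s}}$, the symmetry via \eqref{eqse5-6}, the parallel $y$-computation, and the readout of the $\gamma$'s and $\delta$'s are all correct and coincide with the paper's proof.
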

\begin{proof}
 Since
$H_i=[X_1,Y_i]$   in ${\rm U}_3(p^m)$ for $1 \le i \le m$, we get $h_i\equiv
[x_1,y_i]\pmod{\Z(G)}$ in $G$. Therefore
$$[h_i,x_j]=[[x_1,y_i],x_j].$$

Since $[x_1,x_j]\in \Z(G)$, by \eqref{eqse3-5a}, we get
$$[h_i,x_j]=[[x_1,y_i], x_j]=[[x_j,y_i],x_1].$$
From the relation (R8), $[x_j,y_i]\equiv [x_1^{\kappa_{i,j,1}}
x_2^{\kappa_{i,j,2}}
\cdots x_m^{\kappa_{i,j,m}},y_1]\pmod{\Z(G)}$.  Then, again by \eqref{eqse3-5a},
we get
\begin{align*}
[h_i,x_j]=[x_j,y_i,x_1] &=[x_1^{\kappa_{i,j,1}} x_2^{\kappa_{i,j,2}}
\cdots x_m^{\kappa_{i,j,m}},y_1,x_1]\\
&=[x_1,y_1,x_1^{\kappa_{i,j,1}} x_2^{\kappa_{i,j,2}}
\cdots x_m^{\kappa_{i,j,m}}]  \\
&=[h_1,x_1^{\kappa_{i,j,1}} x_2^{\kappa_{i,j,2}}
\cdots x_m^{\kappa_{i,jm}}]\\
&=z_1^{\kappa_{i,j,1}} z_2^{\kappa_{i,j,2}}\cdots
z_m^{\kappa_{i,j,m}}.
\end{align*}
Since the structure constants $\kappa_{i,j,l}$ are symmetric in $i,j$ (see
\eqref{eqse5-6}),
$[h_i,x_j]=[h_j,x_i]$. This proves the first assertion of the Lemma, and
the second one goes on the same lines.
\end{proof}

As an immediate consequence of the preceding result, we have

\begin{cor}\label{lem15}
Consider the elements $x=x_1^{a_1}\cdots x_m^{a_m}$, $x'=x_1^{b_1} \cdots
x_m^{b_m}$,
$y=y_1^{c_1}\cdots y_m^{c_m}$ and $y'=y_1^{d_1}\cdots y_m^{d_m}$ in $G$. If
$$[x,y,x']=\prod_{l=1}^{2m} z_l^{r_l}~~ \mbox{ and } ~~[y,x,y']=\prod_{l=1}^{2m}
z_l^{s_l},$$
for some $r_l,s_l\in \mathbb{F}_p$, then the values of $r_l$ and $s_l$ are
uniquely determined by the structure constants of ${\rm U}_3(p^m)$ and,
respectively, by $a_i$'s, $b_i$'s, $c_i$'s  and  $a_i$'s, $c_i$'s, $d_i$'s.
\end{cor}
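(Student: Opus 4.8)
The plan is to reduce the claim to a direct application of Lemma \ref{lem14}. The elements $[x,y,x']$ and $[y,x,y']$ are triple commutators, so they lie in $\gamma_3(G) = \Z(G)$ (by Lemma \ref{lem9}), and hence can indeed be written as words $\prod_l z_l^{r_l}$ and $\prod_l z_l^{s_l}$; the content of the corollary is that the exponent vectors $(r_l)$ and $(s_l)$ depend only on the listed data. First I would expand $[x,y]$ using the fact that $\overline G = G/\Z(G)$ has class $2$ (or, more precisely, that $G$ has class $3$ so $G'$ is central modulo $\Z(G)$ and the usual commutator-collection identities apply modulo $\gamma_3(G)$): modulo $\gamma_3(G)$, $[x,y] \equiv \prod_{i,j}[x_i,y_j]^{a_i c_j}$. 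By relation (R7), each $[x_i,y_j]$ is congruent modulo $\Z(G)$ to a product of the $h_l$'s with exponents determined by the $\kappa$'s; so modulo $\gamma_3(G)$ we have $[x,y] \equiv \prod_l h_l^{e_l}\, w$ for some $w \in \gamma_3(G)$, where the integers $e_l$ are polynomial (in fact bilinear) functions of the $a_i$'s, $c_j$'s and the $\kappa$'s alone.

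Next I would compute $[x,y,x'] = [[x,y],x']$. Since $[x,y] \equiv \prod_l h_l^{e_l} \pmod{\gamma_3(G)}$ and $\gamma_3(G) = \Z(G)$ is central, we get $[[x,y],x'] = [\prod_l h_l^{e_l}, x']$, and this commutator is computed in $G' / $ (nothing) since everything here already lands in $\Z(G)$ and $\Z(G)$ is elementary abelian, so the commutator is additive in each slot: $[[x,y],x'] = \prod_{l,j} [h_l, x_j]^{e_l b_j}$. Now Lemma \ref{lem14} gives each $[h_l,x_j]$ explicitly as $\prod_{k=1}^m z_k^{\kappa_{l,j,k}}$, a fixed word in the $z_k$'s with exponents equal to the structure constants of ${\rm U}_3(p^m)$. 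Substituting and collecting exponents of each $z_k$ shows $r_k = \sum_{l,j} e_l b_j \kappa_{l,j,k}$, which is manifestly a function of the $\kappa$'s together with the $a_i$'s (through the $e_l$), the $c_j$'s (through the $e_l$) and the $b_j$'s only — note in particular that the $\lambda$'s, $\mu$'s, $\gamma$'s, $\delta$'s, $\epsilon$'s and $\nu$'s do not enter, because the $\lambda$-correction term in (R7) sits in $\Z(G) = \gamma_3(G)$ and is killed by the outer commutator, and (R3)--(R4) have already been pinned down by Lemma \ref{lem14}. The computation for $[y,x,y'] = [[y,x],y']$ is entirely symmetric, using relation (R7) again (or equivalently $[y,x] = [x,y]^{-1}$) and the second displayed formula of Lemma \ref{lem14} for $[h_l,y_j] = \prod_k z_{m+k}^{\kappa_{l,j,k}}$, yielding $s_{m+k} = \sum_{l,j} e_l d_j \kappa_{l,j,k}$ (and $s_k = 0$ for $k \le m$), a function of the $\kappa$'s, the $a_i$'s, $c_j$'s and $d_j$'s.

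I expect the only mildly delicate point to be bookkeeping the reductions modulo $\gamma_3(G)$ versus modulo $\Z(G)$: one must make sure that when passing from $[x_i,y_j] \equiv \prod h_l^{\kappa} \pmod{\Z(G)}$ to an identity valid modulo $\gamma_3(G)$, the error terms genuinely lie in $\gamma_3(G)$ and are therefore annihilated by the outer commutator with $x'$ or $y'$ — this is exactly where $\gamma_3(G) = \Z(G)$ (Lemma \ref{lem9}) is used, so that both the $\lambda$-terms in (R7) and any collection errors from non-commutativity of the $h_l$'s modulo $\Z(G)$ disappear. Once that is in place, everything is a bilinear substitution and the uniqueness of the $r_k$, $s_k$ as functions of the stated data is immediate. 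I would write this up as a short proof: establish $[x,y] \equiv \prod_l h_l^{e_l} \pmod{\gamma_3(G)}$ with $e_l = e_l\big((a_i),(c_j),(\kappa)\big)$, then invoke Lemma \ref{lem14} and the centrality and elementary-abelianness of $\Z(G)$ to expand the outer commutator additively, and read off $r_k$ and $s_k$.
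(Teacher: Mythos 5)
Your argument is correct and is exactly the computation the paper has in mind when it calls the corollary an ``immediate consequence'' of Theorem \ref{lem14} (the paper gives no written proof): expand $[x,y]$ bilinearly modulo $\gamma_3(G)$ via (R7) and (R9) to get $\prod_l h_l^{e_l}$ with $e_l$ bilinear in the $a_i$'s, $c_j$'s and the $\kappa$'s, note the $\lambda$-corrections and collection errors lie in $\gamma_3(G)\le \Z(G)$ and are killed by the outer commutator, and then expand $[\,\cdot\,,x']$ and $[\,\cdot\,,y']$ additively using Theorem \ref{lem14}. The only quibble is a harmless sign in your formula for $s_{m+k}$ (using $[y,x]=[x,y]^{-1}$ introduces $-e_l$), which does not affect the uniqueness claim.
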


Up to now, we have shown the uniqueness of $\gamma$'s and $\delta$'s (Theorem
\ref{lem14}). Next we  show the uniqueness of $\lambda$'s and $\mu$'s. First we
prove some lemmas.

\begin{lemma}\label{lem16}
For $1 \le i, j \le m$, the following hold:
\begin{enumerate}
\item $[x_i,x_j] \in \gen{z_1,\ldots, z_m}$.
\item $[y_i,y_j] \in \gen{z_{m+1},\ldots, z_{2m}}$.
\end{enumerate}
\end{lemma}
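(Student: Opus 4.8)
The plan is to read off the location of $[x_j,x_l]$ inside $\Z(G)$ from the \emph{size} of the centralizer $\C_G(x_j)$, using that $G$ has conjugate type $(1,p^{2m})$. Recall from the proof of Lemma~\ref{lem10} that $[G:G']=p^{2m}$, $[G':\Z(G)]=p^m$ and $|\Z(G)|=p^{2m}$ (so $|G|=p^{5m}$), that $z_1,\dots,z_{2m}$ is an $\mathbb{F}_p$-basis of the elementary abelian group $\Z(G)$, and that $\varphi\colon G\to\mathrm{U}_3(p^m)$ is the epimorphism $x_i\mapsto X_i$, $y_i\mapsto Y_i$, $h_i\mapsto H_i$ with kernel $\Z(G)$. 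Put $U:=\langle z_1,\dots,z_m\rangle$ and $V:=\langle z_{m+1},\dots,z_{2m}\rangle$, each elementary abelian of order $p^m$. It suffices to prove (1); (2) is obtained by running the same argument with the $x_i$ replaced by the $y_i$, with $U$ replaced by $V$, and with the subgroup $P$ below replaced by $\langle y_1,\dots,y_m,h_1,\dots,h_m,\Z(G)\rangle$ (using the second half of Theorem~\ref{lem14}).

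First I would locate $\C_G(x_j)$. Since $x_j\notin G'$, Theorem~\ref{lem7}(1) gives $|\C_{\overline{G}}(\overline{x}_j)|=p^{2m}$; as $[X_j,X_l]=[X_j,H_k]=1$ in $\mathrm{U}_3(p^m)$ by \eqref{eqse5-2}--\eqref{eqse5-3}, the elementary abelian subgroup $\overline{A}:=\langle\overline{x}_1,\dots,\overline{x}_m,\overline{h}_1,\dots,\overline{h}_m\rangle$, which also has order $p^{2m}$, is contained in $\C_{\overline{G}}(\overline{x}_j)$ and hence equals it; therefore $\C_G(x_j)\le\varphi^{-1}(\overline{A})=P:=\langle x_1,\dots,x_m,h_1,\dots,h_m,\Z(G)\rangle$. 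Because $\overline{A}$ is elementary abelian with basis the $\overline{x}_l,\overline{h}_k$ and $x_l^p,h_k^p\in\Z(G)$, every $g\in P$ has a unique expression $g=\bigl(\prod_l x_l^{a_l}\bigr)\bigl(\prod_k h_k^{b_k}\bigr)z$ with $a_l,b_k\in\mathbb{F}_p$ and $z\in\Z(G)$; and since $\varphi(x_j),\varphi(g)$ commute, all the commutators $[x_j,x_l]$, $[x_j,h_k]$ and $[x_j,z]$ lie in $\Z(G)$, so $[x_j,\,\cdot\,]$ is a homomorphism $P\to\Z(G)$ and
\[
[x_j,g]=\Bigl(\prod_{l=1}^m [x_j,x_l]^{a_l}\Bigr)\Bigl(\prod_{k=1}^m [x_j,h_k]^{b_k}\Bigr).
\]
By Theorem~\ref{lem14}, $[x_j,h_k]=[h_k,x_j]^{-1}=z_1^{-\kappa_{k,j,1}}\cdots z_m^{-\kappa_{k,j,m}}\in U$, while $[x_j,x_l]\in\Z(G)$ by (R5). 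Moreover the $\mathbb{F}_p$-linear map $\Phi_j\colon\mathbb{F}_p^m\to U$, $(b_k)\mapsto\prod_k[x_j,h_k]^{b_k}$, is bijective: if the product is trivial then $\sum_k b_k\kappa_{k,j,l'}=0$ for $1\le l'\le m$ (as $z_1,\dots,z_m$ are independent), so $\alpha^{j-1}\sum_k b_k\alpha^{k-1}=\sum_{l'}\bigl(\sum_k b_k\kappa_{k,j,l'}\bigr)\alpha^{l'-1}=0$ in $\mathbb{F}_{p^m}$, forcing $\sum_k b_k\alpha^{k-1}=0$ and hence all $b_k=0$ by independence of $1,\alpha,\dots,\alpha^{m-1}$; injectivity together with $|\mathbb{F}_p^m|=p^m=|U|$ gives surjectivity.

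To finish, from the displayed formula an element $g=\bigl(\prod x_l^{a_l}\bigr)\bigl(\prod h_k^{b_k}\bigr)z$ of $P$ lies in $\C_G(x_j)$ iff $\prod_l[x_j,x_l]^{a_l}=\Phi_j\bigl((b_k)\bigr)^{-1}$, which (as $\Phi_j$ is a bijection onto $U$) is possible precisely when $(a_l)$ lies in the subspace $S_j:=\{(a_l)\in\mathbb{F}_p^m:\prod_l[x_j,x_l]^{a_l}\in U\}$, in which case $(b_k)$ is determined by $(a_l)$ and $z$ is arbitrary. Hence $|\C_G(x_j)|=|S_j|\cdot|\Z(G)|=|S_j|\,p^{2m}$. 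On the other hand $x_j$ is non-central and $G$ has conjugate type $(1,p^{2m})$, so $|\C_G(x_j)|=p^{5m}/p^{2m}=p^{3m}$; comparing, $|S_j|=p^m$, i.e. $S_j=\mathbb{F}_p^m$, i.e. $[x_j,x_l]\in U=\langle z_1,\dots,z_m\rangle$ for every $l$. As $j$ is arbitrary this proves (1), and (2) follows as indicated. I expect the only genuine work to be the bookkeeping for the normal form of elements of $P$ and the identity for $[x_j,g]$; the conjugate-type hypothesis then makes the conclusion immediate, and it is exactly where that hypothesis must enter, since the $\mathrm{U}_3(p^m)$-structure alone only pins down $[x_j,x_l]$ modulo $\Z(G)$.
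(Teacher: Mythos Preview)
Your argument is correct, but it takes a longer route than the paper's. The paper observes that since $[x_i,x_j]\in\Z(G)$, Corollary~\ref{se3-cor1} supplies $h\in G'$ with $[x_i,x_jh]=1$; expanding, $[x_i,x_j]=[h,x_i]$, and since $h\in G'=\langle h_1,\dots,h_m,\Z(G)\rangle$ with $[\,\cdot\,,x_i]$ a homomorphism $G'\to\Z(G)$, Theorem~\ref{lem14} immediately gives $[h,x_i]\in\langle z_1,\dots,z_m\rangle$. That is three lines.

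Your proof instead unpacks the content of Corollary~\ref{se3-cor1}: you locate $\C_G(x_j)$ inside $P$, parametrize it, and count its order via the subspace $S_j$, then invoke the conjugate type $(1,p^{2m})$ to force $|S_j|=p^m$, i.e.\ $S_j=\mathbb{F}_p^m$. Both arguments rest on Theorem~\ref{lem14} (to know where the $[h_k,x_j]$ live) and on Theorem~\ref{lem7}(1) (the paper through its corollary, you directly). Your version has the merit of making explicit exactly where the size of conjugacy classes is used, but the paper's version shows that once Corollary~\ref{se3-cor1} is available, the counting and the injectivity of $\Phi_j$ are unnecessary: one simply writes $[x_i,x_j]$ as a commutator $[h,x_i]$ with $h\in G'$ and reads off the conclusion from Theorem~\ref{lem14}.
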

\begin{proof}
Fix $i,j$ with $1\le i,j\leq m$. Since $[x_i,x_j]\in \Z(G)$, by
Corollary  \ref{se3-cor1}, there exists $h\in G'$ (depending on $x_i,x_j$) such
that
$[x_i,x_jh]=1$. Then
$$[x_i,x_j]=[x_i,h]^{-1}=[h,x_i].$$
Since $G'=\langle h_1,h_2,\ldots,h_m,\Z(G)\rangle$, by
Theorem \ref{lem14}, $[h,x_i]\in\langle z_1,\ldots,
z_m\rangle$,
 proving the first assertion.
The second assertion follows on the same lines.
\end{proof}

Before proceeding further, we recall the following commutator identities in a
finite $p$-group $G$ of nilpotency class $3$ with $p$ odd, which will be used in
computations without any reference. Note that, in  this case  $G'$ is abelian, so
the ordering of commutators is immaterial. For $a,b,c\in G$,
\begin{enumerate}
 \item $[ab,c]=[a,c][b,c][a,c,b]$ and $[a,bc]=[a,b][a,c][a,b,c]$;
 \item $[a^i,b^j,c^k]=[a,b,c]^{ijk}$ (since $G$ is of class $3$);
 \item $[a^s,b]=[a,b]^s [a,b,a]^{\binom{s}{2}}$ and $[a,b^s]=[a,b]^s
[a,b,b]^{\binom{s}{2}}$ \hskip3mm
 ($s\in\mathbb{F}_p$),
 \end{enumerate}
 where $\binom{s}{2}=\frac{s(s-1)}{2}$ in $\mathbb{F}_p$, $p>2$.

\begin{lemma}\label{lem17}
For $x=x_1^{a_1}\cdots x_m^{a_m}$ and $y=y_1^{a_1}\cdots
y_m^{a_m}$ in $G$, the following hold:
\begin{enumerate}
\item $[x_1,y]\equiv
[x,y_1]\pmod{\Z(G)}$.
\item If
$[x_1,y]=[x,y_1]\prod_{l=1}^{2m} z_l^{c_l},$
then $c_l$'s are uniquely determined by $a_i$'s and  the structure
constants of ${\rm U}_3(p^m)$.
\end{enumerate}
\end{lemma}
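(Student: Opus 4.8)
The plan is to treat the two parts by different means: (1) is a computation in $\overline G=G/\Z(G)\cong{\rm U}_3(p^m)$, while (2) is an explicit expansion of the two commutators in $G$ in which every correction term is pinned down by Theorem~\ref{lem14} and Corollary~\ref{lem15}. For (1), since the homomorphism $\varphi\colon G\to{\rm U}_3(p^m)$ has kernel $\Z(G)$, the congruence $[x_1,y]\equiv[x,y_1]\pmod{\Z(G)}$ amounts to the identity $[X_1,Y_1^{a_1}\cdots Y_m^{a_m}]=[X_1^{a_1}\cdots X_m^{a_m},Y_1]$ in ${\rm U}_3(p^m)$; since that group has class $2$ with every $[X_i,Y_j]=H_{i+j-1}$ central, commutation is bi-multiplicative, and both sides equal $\prod_{i=1}^m H_i^{a_i}$, using $[X_i,Y_1]=H_i=[X_1,Y_i]$ from \eqref{eqse5-4}. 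This settles (1), and it also shows $[x_1,y]\equiv\prod_i h_i^{a_i}\pmod{\Z(G)}$, so that $\prod_{l=1}^{2m}z_l^{c_l}=[x_1,y]\,[x,y_1]^{-1}$ really lies in $\Z(G)=\gamma_3(G)$ and the $c_l$ are well defined.

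For (2) I would expand both commutators using the three class-$3$ commutator identities recalled just before Lemma~\ref{lem16}. On one side, using $[x_1,y_i]=h_i$ from relation~(R9), one gets
\[
[x_1,y]=\Big(\prod_i h_i^{a_i}\Big)\,\prod_i[h_i,y_i]^{\binom{a_i}{2}}\,\prod_{i<j}[h_i,y_j]^{a_ia_j},
\]
and by Theorem~\ref{lem14} every factor $[h_i,y_j]$ is a word in $z_{m+1},\dots,z_{2m}$ whose exponents are the structure constants, so the $z$-part of $[x_1,y]$ depends only on the $a_i$ and the $\kappa$'s. The analogous manipulation on the other side yields
\[
[x,y_1]=\Big(\prod_i h_i^{a_i}\Big)\,\Big(\prod_i\big([x_i,y_1]h_i^{-1}\big)^{a_i}\Big)\,\prod_i[h_i,x_i]^{\binom{a_i}{2}}\,\prod_{i<j}[h_i,x_j]^{a_ia_j},
\]
where $[x_i,y_1,x_i]=[h_i,x_i]$ and $[x_i,y_1,x_j]=[h_i,x_j]$ are once again $\kappa$-determined by Theorem~\ref{lem14} (triple commutators of more general $x$- and $y$-words being covered by Corollary~\ref{lem15}). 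Dividing the two displays, $c_l$ becomes a function of the $a_i$ and the $\kappa$'s \emph{except} for the contribution of the central element $\prod_i\big([x_i,y_1]h_i^{-1}\big)^{a_i}$.

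The hard part is exactly to show that this remaining central element is itself determined by the $a_i$ and the structure constants. The lifts $x_i$ are fixed only modulo $\Z(G)$, so a priori $[x_i,y_1]$ need not equal $h_i=[x_1,y_i]$; relation~(R7) at the pair $(i,1)$, where $\kappa_{i,1,\cdot}$ is the Kronecker vector, gives only $[x_i,y_1]=h_i\prod_l z_l^{\lambda_{i,1,l}}$, and at first glance the $\lambda_{i,1,l}$ look like free parameters. The plan is to show the central defect $[x_i,y_1]h_i^{-1}$ is in fact forced: I would confront relation~(R7) at $(i,1)$ with the description of the same commutator coming from relation~(R8), and combine this with the Hall--Witt consequence \eqref{eqse3-5a} (applied to $(x_1,x_j)$ and to $(y_1,y_j)$, whose commutators are central) together with the already-known values of $[h_i,x_j]$ and $[h_i,y_j]$, to express $[x_i,y_1]h_i^{-1}$ purely through the $\kappa$'s. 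Substituting this back into the displayed formula for $[x,y_1]$ then gives $c_l$ as a function of the $a_i$ and the structure constants alone; the remainder is routine, if lengthy, class-$3$ commutator bookkeeping.
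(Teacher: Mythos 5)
Your part (1) and your reduction of part (2) to the central ``defects'' $[x_i,y_1]h_i^{-1}$ are fine (and your expansion of $[x_1,y]$ and $[x,y_1]$, with all triple-commutator corrections $\kappa$-determined via Theorem~\ref{lem14} and Corollary~\ref{lem15}, matches what is needed). The gap is in the step you yourself flag as the hard part. You propose to force the values of $[x_i,y_1]h_i^{-1}$ (equivalently the $\lambda_{i,1,l}$) by confronting relation (R7) at $(i,1)$ with relation (R8) and invoking the Hall--Witt consequence \eqref{eqse3-5a} together with the known values of $[h_i,x_j]$, $[h_i,y_j]$. This cannot work: at those index positions $\kappa_{i,1,\cdot}$ and $\kappa_{1,j,\cdot}$ are Kronecker vectors, so (R8) merely forces $\mu_{1,j,l}=0$ and (R7) merely restates $[x_i,y_1]=h_i\prod_l z_l^{\lambda_{i,1,l}}$ with the $\lambda_{i,1,l}$ untouched; and \eqref{eqse3-5a} is an identity valid in \emph{every} group of class $3$, hence holds for arbitrary values of the $\lambda$'s and can only reproduce information about triple commutators (which are already $\kappa$-determined), not about the $\Z(G)$-part of a double commutator. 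In other words, the presentation of Lemma~\ref{lem10} together with Hall--Witt is consistent with any choice of the $\lambda_{i,1,l}$; what singles out their values is the hypothesis that $G$ has conjugate type $(1,p^{2m})$, and your argument never uses it at this point. (Determining the $\lambda$'s is exactly the content of Theorem~\ref{se5-thm4}, which the paper proves \emph{after} and \emph{by means of} this lemma, so importing it here would also be circular.)

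The paper's proof avoids determining the defects individually. From $[X_1Y_1^r,XY^r]=1$ in ${\rm U}_3(p^m)$ one gets $[x_1y_1^r,xy^r]\equiv 1\pmod{\Z(G)}$ for every $r\in\mathbb{F}_p$, and then the conjugate-type hypothesis enters through Corollary~\ref{se3-cor1}: there is $h(r)\in G'$ with $[x_1y_1^r,xy^rh(r)]=1$ exactly. Expanding this (using $[x_1,x]=[y_1,y]=1$ from Lemma~\ref{lem12}), splitting the resulting central element into its $\langle z_1,\dots,z_m\rangle$- and $\langle z_{m+1},\dots,z_{2m}\rangle$-components via Theorem~\ref{lem14}, one first pins down $h(r)\equiv\tilde h^{\,r}\pmod{\Z(G)}$ from the first component and then, comparing $r=1$ with $r=-1$ in the second, obtains $\prod_{l=m+1}^{2m}z_l^{2c_l}=[x_1,y,y][y_1,x,y_1]$, whose right-hand side is $\kappa$-determined by Corollary~\ref{lem15}; the symmetric computation with $[y_1x_1^r,yx^rk(r)]=1$ handles $1\le l\le m$. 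If you want to salvage your route, you must inject the conjugate-type condition in some such form (exact centralizing corrections in $G'$); without it the defects are simply not determined by the data you allow yourself.
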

\begin{proof}
 If $a_i=0$ for all $i$, then $x=y=1$, and so $c_i=0$ for
all $i$, there is nothing to prove. Thus, assume that not all $a_i$'s are $0$.
In ${\rm U}_3(p^m)$, consider $X=X_1^{a_1}\cdots
X_m^{a_m}$ and $Y=Y_1^{a_1}Y_2^{a_2}\cdots Y_m^{a_m}$. Since
$[X_i,Y_j]=[X_j,Y_i]$ for all $i,j$ (by \eqref{eqse5-4}), we have
$[X_1,Y]=[X,Y_1]$; hence
 $[x_1,y]\equiv [x,y_1]\pmod{\Z(G)}$ in  $G$. This proves assertion (1).

We prove assertion (2) in two steps, as the arguments of  Step 1 will
be used further.

\vskip3mm
\noindent{\bf Step 1.}  {\it For $m+1 \le l \le 2m$, $c_l$ is uniquely
determined by the  structure constants of ${\rm U}_3(p^m)$.}
\vskip3mm

With $X,Y$ as in the proof of assertion (1), we have $[X_1,Y]=[X,Y_1]$, which
implies that
 $[X_1Y_1^r, XY^r]=1$ for any $r \in\mathbb{F}_p$; hence, in $G$,  we get
$$[x_1y_1^r, xy^r]\equiv 1\pmod{\Z(G)}.$$
By Corollary \ref{se3-cor1}, there exists an element $h(r)$, depending on $r$,
in $G'$ such that
$$[x_1y_1^r, xy^rh(r)]=1.$$

Since $[x_1,x]=[y_1,y]=1$ (see Lemma \ref{lem12}), we have
\begin{align*}
1 = [x_1y_1^r, xy^rh(r)] = &[x_1,y]^r[x_1,y,y]^{\binom{r}{2}}
[x_1,y,y_1]^{r^2}[x_1,h(r)]\\
 & [y_1,x]^r [y_1,x,y_1]^{\binom{r}{2}}[y_1, x,y]^{r^2}[y_1,h(r)]^r.
 \end{align*}
Consequently, by the given hypothesis, we get
\begin{align}
 (\prod_{l=1}^{2m} z_l^{c_l})^{-r}= ([x_1,y][y_1,x])^{-r}
= &[x_1,y,y]^{\binom{r}{2}} [x_1,y,y_1]^{r^2}[x_1,h(r)]\label{eqse5-9}\\
 & [y_1,x,y_1]^{\binom{r}{2}}[y_1, x,y]^{r^2}[y_1,h(r)]^r .\nonumber
\end{align}
By Theorem \ref{lem14}, $[G',y], [G',y_1]\leq \langle z_{m+1},\ldots,
z_{2m}\rangle$ and
$[x_1,h(r)]\in \langle z_1,\ldots, z_m\rangle$; hence  all the commutators on
the right
side of \eqref{eqse5-9}, except $[x_1,h(r)]$, belong to $\langle z_{m+1},\ldots,
z_{2m}\rangle$.
Thus
\begin{align}\label{eqse5-10}
(\prod_{l=1}^{m} z_l^{-c_l} )^{r} &=[x_1,h(r)]
\end{align}
and
\begin{align}\label{eqse5-11}
(\prod_{l=m+1}^{2m} z_l^{-c_l})^{r} &=[x_1,y,y]^{\binom{r}{2}} [x_1,y,y_1]^{r^2}
[y_1,x,y_1]^{\binom{r}{2}}[y_1, x,y]^{r^2}[y_1,h(r)]^{r}.
\end{align}

Since, by Theorem \ref{lem14},
$z_i = [h_1,x_i] = [h_i ,x_1]$ for $ 1\le i \le m$, we have
$$\prod_{l=1}^{m} z_l^{-c_l} = \prod_{l=1}^{m} [x_1, h_l]^{c_l} = [x_1,
h_1^{c_1} \cdots  h_m^{c_m}] = [x_1, \tilde{h}],$$
where $\tilde{h} = h_1^{c_1} \cdots  h_m^{c_m}$, which is independent of $r$.
Then from \eqref{eqse5-10} we get
$[x_1,h(r)]=[x_1,\tilde{h}]^r$, which implies that $\tilde{h}^rh(r)^{-1}\in
C_G(x_1)\cap G'=\Z(G)$.
Hence
 $$h(r)\equiv \tilde{h}^r\pmod{\Z(G)}.$$

Using this in \eqref{eqse5-11}, we get
$$(\prod_{l=m+1}^{2m} z_l^{-c_l})^{r} =[x_1,y,y]^{\binom{r}{2}}
[x_1,y,y_1]^{r^2}
[y_1,x,y_1]^{\binom{r}{2}}[y_1, x,y]^{r^2}[y_1,\tilde{h}]^{r^2}.$$
Since this equation holds for all $r\in\mathbb{F}_p$, for $r=1$ and $r=-1$,
we, respectively, get
\begin{align*}
\prod_{l=m+1}^{2m} z_l^{-c_l}&=[x_1,y,y_1] [y_1, x,y] [y_1,\tilde{h}]
\end{align*}
and
\begin{align*}
\prod_{l=m+1}^{2m} z_l^{c_l}& =[x_1,y,y]  [x_1,y,y_1] [y_1,x,y_1] [y_1, x,y]
[y_1,\tilde{h}].
\end{align*}
From these  equations, we obtain
$$\prod_{l=m+1}^{2m} z_l^{2c_l} =[x_1,y,y][y_1,x,y_1].$$

By Corollary \ref{lem15}, the right hand side of the preceding equation is
uniquely determined by $a_i$'s and  the structure
constants of ${\rm U}_3(p^m)$,  so are $c_l$ for $ m+1 \le l \le 2m$.

\vskip3mm
\noindent{\bf Step 2.}  {\it  For $1 \le l \le m$, $c_l$ is uniquely determined
by the  structure constants of ${\rm U}_3(p^m)$.}
\vskip3mm

With $X,Y$ as in the proof of assertion (1), we have $[Y, X_1]=[Y_1, X]$, which
implies that
 $[Y_1X_1^r, YX^r]=1$ for any $r \in\mathbb{F}_p$; hence, in $G$,  we get
$$[y_1x_1^r, yx^r]\equiv 1\pmod{\Z(G)}.$$

By Corollary \ref{se3-cor1}, there exists an element $k(r)$, depending on $r$,
in $G'$ such that
$[y_1x_1^r, yx^rk(r)] = 1$. With appropriate modifications in Step (1), it
follows that $c_l$ for $1 \le l  \le m$ are uniquely determined by  $a_i$'s and
the structure constants of ${\rm U}_3(p^m)$.

Finally by Step 1 and Step 2, $c_l$ for $1 \le l  \le 2m$ are
uniquely determined by  $a_i$'s and the structure constants of ${\rm U}_3(p^m)$.
\end{proof}

\begin{lemma}\label{lem18}
In the relations (R7), namely, for $1 \le i, j \le m$,
\begin{align}\label{eqse5-12}
& [x_i,y_j]=[x_1, y_1^{\kappa_{i,j,1}} y_2^{\kappa_{i,j,2}}\cdots
y_m^{\kappa_{i,j,m}}]  \prod_{l=1}^{2m} z_l^{\lambda_{i,j,l}},
  \end{align}
  $\lambda_{i,j,l}$,  $m+1 \le l \le 2m$, are uniquely
determined by the structure constants of ${\rm
U}_3(p^m)$.
\end{lemma}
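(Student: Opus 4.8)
The plan is to relate the commutator $[x_i, y_j]$ back to a commutator of the form $[x_1, y]$ (for a suitable product $y$ of the $y_k$'s) modulo $\Z(G)$, and then to read off the $z_l$-exponents with $m+1 \le l \le 2m$ by computing the ``$\gamma_3$-part'' along the $\langle z_{m+1}, \dots, z_{2m}\rangle$ direction, which by Theorem \ref{lem14} is exactly the subgroup into which all commutators $[G', y_k]$ land. Concretely, set $y = y_1^{\kappa_{i,j,1}} \cdots y_m^{\kappa_{i,j,m}}$, so that \eqref{eqse5-12} reads $[x_i, y_j] = [x_1, y] \prod_l z_l^{\lambda_{i,j,l}}$. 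In $U_3(p^m)$ we have $[X_i, Y_j] = [X_1, Y]$, hence $[X_i Y_1^r, Y_j X^{-r}] $-type relations can be manufactured; but more directly, I would exploit the symmetry $[X_i, Y_j] = [X_j, Y_i]$ together with relation (R8) to get a second expression for $[x_i, y_j]$ modulo $\Z(G)$, and then compare.

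The key steps, in order, are as follows. First, I would use the identity \eqref{eqse3-5a} (Hall--Witt consequence) in the form: since $[x_i, y_j] \equiv [x_1, y] \pmod{\Z(G)}$ with $y$ as above, and since $[x_1, x_i] \in \Z(G)$ (one may arrange $[x_1,x_j]=1$ by Lemma \ref{lem12}, but only $[x_1,x_j]\in\Z(G)$ is needed), one obtains that the difference $[x_i,y_j][x_1,y]^{-1} = \prod_l z_l^{\lambda_{i,j,l}} \in \Z(G)$ is controlled by double commutators of the generators. Second, following the template of Lemma \ref{lem17}: form the element $[x_i y_1^r, y_j x^{-r} h(r)] = 1$ for a suitable $h(r) \in G'$ provided by Corollary \ref{se3-cor1} (using that $[x_i, y_j] \equiv [x_1, y]$ forces $[x_i y_1^r, y_j x^{-r}] \equiv 1 \pmod{\Z(G)}$, where $x = x_1^{\kappa_{i,j,1}}\cdots x_m^{\kappa_{i,j,m}}$), expand using the class-$3$ commutator identities, and split the resulting identity in $\Z(G) = \gamma_3(G)$ according to the direct-sum decomposition $\Z(G) = \langle z_1, \dots, z_m\rangle \oplus \langle z_{m+1}, \dots, z_{2m}\rangle$ coming from Theorem \ref{lem14}. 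Third, the component in $\langle z_{m+1}, \dots, z_{2m}\rangle$ will express $\prod_{l=m+1}^{2m} z_l^{r\lambda_{i,j,l}}$ purely in terms of triple commutators of the form $[x_a, y_b, y_c]$ and $[y_a, x_b, y_c]$ (the terms $[x_a, h(r)]$ and similar fall into the $\langle z_1, \dots, z_m\rangle$ part and drop out), evaluate at $r = 1$ and $r = -1$ to eliminate the $h(r)$-dependence as in Lemma \ref{lem17}, and conclude via Corollary \ref{lem15} that these triple commutators are uniquely determined by the structure constants $\kappa_{a,b,l}$. Hence $\lambda_{i,j,l}$ for $m+1 \le l \le 2m$ are uniquely determined.

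The main obstacle I anticipate is the bookkeeping in the commutator expansion: when expanding $[x_i y_1^r, y_j x^{-r} h(r)]$ with $x$ a product of several $x_k$'s and $y$ a product of several $y_k$'s, one generates many triple-commutator terms, and one must carefully verify (using Theorem \ref{lem14}, which says $[h_a, x_b] \in \langle z_1, \dots, z_m\rangle$ and $[h_a, y_b] \in \langle z_{m+1}, \dots, z_{2m}\rangle$) that exactly the terms involving a final ``$x$-type'' entry land in $\langle z_1, \dots, z_m\rangle$ and those with a final ``$y$-type'' entry land in $\langle z_{m+1}, \dots, z_{2m}\rangle$, so that the projection onto the second summand isolates precisely the desired relation. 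The $h(r)$-term is the subtle one: one shows $h(r) \equiv \tilde h^{\,r} \pmod{\Z(G)}$ for a fixed $\tilde h \in G'$ (again exactly as in Lemma \ref{lem17}, Step 1, using $\C_G(x_i) \cap G' = \Z(G)$), so that its contribution to the $\langle z_{m+1}, \dots, z_{2m}\rangle$-part becomes a triple commutator $[y_1, \tilde h]^{r^2}$-type term that is itself determined by the $\kappa$'s. Once the decomposition is set up cleanly, the uniqueness for $m+1 \le l \le 2m$ follows formally; the analogous argument with the roles of the $x$'s and $y$'s interchanged would then handle $1 \le l \le m$, but that is presumably the content of a subsequent lemma rather than part of this statement.
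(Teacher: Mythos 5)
Your overall strategy is exactly the paper's: manufacture a commutator that is trivial modulo $\Z(G)$ for every $r\in\mathbb{F}_p$, invoke Corollary \ref{se3-cor1} to get $h(r)\in G'$, expand with the class-$3$ identities, split $\Z(G)=\langle z_1,\dots,z_m\rangle\times\langle z_{m+1},\dots,z_{2m}\rangle$ via Theorem \ref{lem14}, pin down $h(r)\equiv\tilde h^{\,r}\pmod{\Z(G)}$, compare $r=1$ with $r=-1$, and finish with Corollary \ref{lem15}; the paper's proof consists precisely of writing down the right auxiliary commutator and then saying the claim follows along Step 1 of Lemma \ref{lem17}. The gap is in that auxiliary commutator. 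Your element $[x_iy_1^r,\,y_jx^{-r}]$, with $x=x_1^{\kappa_{i,j,1}}\cdots x_m^{\kappa_{i,j,m}}$, is \emph{not} congruent to $1$ modulo $\Z(G)$: computing in $\overline G\cong{\rm U}_3(p^m)$, which has class $2$, so $[ab,cd]=[a,c][a,d][b,c][b,d]$,
\[
[\overline{x}_i\overline{y}_1^{\,r},\ \overline{y}_j\overline{x}^{\,-r}]
=[\overline{x}_i,\overline{y}_j]\,[\overline{x}_i,\overline{x}]^{-r}\,[\overline{y}_1,\overline{y}_j]^{\,r}\,[\overline{y}_1,\overline{x}]^{-r^2}
=[\overline{x}_i,\overline{y}_j]^{\,1+r^2}\neq 1,
\]
since $[\overline{x}_i,\overline{x}]=[\overline{y}_1,\overline{y}_j]=1$ and $[\overline{x},\overline{y}_1]=[\overline{x}_i,\overline{y}_j]$; already at $r=1$ this equals $[\overline{x}_i,\overline{y}_j]^2\neq 1$. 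The two cross terms enter with degrees $1$ and $r^2$ instead of both with degree $r$, so the parenthetical claim that $[x_i,y_j]\equiv[x_1,y]$ ``forces'' the congruence is false, Corollary \ref{se3-cor1} does not apply, $h(r)$ need not exist, and the expansion/projection machinery never gets started.

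There is also a structural mismatch: even after repairing the degrees (for instance $[x_iy_1^r,\,x\,y_j^r]\equiv 1\pmod{\Z(G)}$ does hold for all $r$), pairing $x_i,y_j$ against $x,y_1$ isolates the deviation $[x_i,y_j][x,y_1]^{-1}=\prod_l z_l^{\mu_{j,i,l}}$ coming from (R8), i.e.\ it would prove a form of Lemma \ref{lem19}, not the statement about the $\lambda$'s in (R7). To extract $\lambda_{i,j,l}$ for $m+1\le l\le 2m$ you must pair against $[x_1,y]$ with $y=y_1^{\kappa_{i,j,1}}\cdots y_m^{\kappa_{i,j,m}}$, as the paper does with the element $[x_1y_j^r,\,x_iy^r]$: its class-$2$ part is $([x_1,y][x_i,y_j]^{-1})^{r}=\bigl(\prod_l z_l^{\lambda_{i,j,l}}\bigr)^{-r}$ for every $r$, and with that element your remaining steps (the $\Z(G)$-decomposition, $h(r)\equiv\tilde h^{\,r}$, evaluation at $r=\pm1$, Corollary \ref{lem15}) go through verbatim as in Step 1 of Lemma \ref{lem17}.
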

\begin{proof}
For simplicity, we fix $i,j\leq m$ and write
$y_1^{\kappa_{i,j,1}}y_2^{\kappa_{i,j,2}}\cdots
y_m^{\kappa_{i,j,m}} = y.$
Since $[x_i,y_j]\equiv [x_1,y]\pmod{\Z(G)}$, we have
$[x_1y_j^r,x_iy^r]\equiv 1\pmod{\Z(G)}$ for any
$r\in\mathbb{F}_p$.

That $\lambda_{i,j,l}$,
$ m+1 \le l \le 2m$, are uniquely determined by the structure constants of ${\rm
U}_3(p^m)$, follows on the lines (without any extra work) of  Step 1 of Lemma
\ref{lem17}.
\end{proof}

By the symmetry of  $x_i$'s and $y_i$'s,  the following lemma is dual of the
preceding one.

\begin{lemma}\label{lem19}
In the relations (R8), namely, for $1 \le i, j \le m$,
\begin{align}\label{eqse5-13}
 & [x_j,y_i]=[x_1^{\kappa_{i,j,1}} x_2^{\kappa_{i,j,2}}\cdots
x_m^{\kappa_{i,j,m}},y_1]  \prod_{l=1}^{2m} z_l^{\mu_{i,j,l}},
  \end{align}
$\mu_{i,j,l}$,  $1 \le l \le m$, are uniquely determined by the structure
constants of ${\rm U}_3(p^m)$.
\end{lemma}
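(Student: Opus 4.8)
The plan is to invoke the duality between the roles of the $x_i$'s and the $y_i$'s that is already visible in the structure of the presentation in Lemma \ref{lem10} and in the symmetric relation $\kappa_{i,j,l}=\kappa_{j,i,l}$ of \eqref{eqse5-6}. Concretely, I would observe that relation (R8), $[x_j,y_i]=[x_1^{\kappa_{i,j,1}}\cdots x_m^{\kappa_{i,j,m}},y_1]\prod_l z_l^{\mu_{i,j,l}}$, is obtained from relation (R7) under the formal interchange $x_t \leftrightarrow y_t$ (for $1\le t\le m$), which by relations (R9) sends $h_i=[x_1,y_i]$ to $[y_1,x_i]=h_i^{-1}$, hence fixes $G'/\Z(G)$ up to sign, and sends the central generators $z_i=[h_1,x_i]$ to $z_{m+i}=[h_1,y_i]$ and vice versa, i.e. it swaps the two blocks $\gen{z_1,\dots,z_m}$ and $\gen{z_{m+1},\dots,z_{2m}}$ of $\Z(G)=\gamma_3(G)$. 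Under this correspondence, the assertion ``$\mu_{i,j,l}$ for $1\le l\le m$ are determined by the structure constants'' is exactly the image of the assertion of Lemma \ref{lem18}, namely ``$\lambda_{i,j,l}$ for $m+1\le l\le 2m$ are determined by the structure constants.''

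The key steps, in order, are: first, verify that the map on generators $x_t\mapsto y_t$, $y_t\mapsto x_t$, $h_i\mapsto h_i^{-1}$ (equivalently $h_i\mapsto [y_1,x_i]$), $z_i\mapsto z_{m+i}$, $z_{m+i}\mapsto z_i$ (for $1\le i\le m$) is compatible with all the defining relations (R0)--(R9) up to the already-established uniqueness statements of Theorem \ref{lem14} and Lemma \ref{lem17}; the only point needing care is that (R3)/(R4) go to each other correctly, which is guaranteed by Theorem \ref{lem14} since $[h_i,x_j]=z_1^{\kappa_{i,j,1}}\cdots z_m^{\kappa_{i,j,m}}$ maps to $z_{m+1}^{\kappa_{i,j,1}}\cdots z_{2m}^{\kappa_{i,j,m}}=[h_i,y_j]$. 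Second, repeat the argument of Step 1 of Lemma \ref{lem17} verbatim with the roles of $x$ and $y$ interchanged: starting from $[x_j,y_i]\equiv[x_1,y]\pmod{\Z(G)}$ with $y=y_1^{\kappa_{i,j,1}}\cdots y_m^{\kappa_{i,j,m}}$, one instead uses $[y_i,x_j]\equiv[y_1,x]\pmod{\Z(G)}$ with $x=x_1^{\kappa_{i,j,1}}\cdots x_m^{\kappa_{i,j,m}}$ and forms $[y_1x_1^r,\,y x^r]\equiv 1\pmod{\Z(G)}$; applying Corollary \ref{se3-cor1} to get an element $k(r)\in G'$ with $[y_1 x_1^r,\,y x^r k(r)]=1$ and expanding via the standard commutator identities, the components of the right-hand side lying in $\gen{z_1,\dots,z_m}$ are isolated (using Theorem \ref{lem14} to see that $[G',x],[G',x_1]\le\gen{z_1,\dots,z_m}$ and $[y_1,k(r)]\in\gen{z_{m+1},\dots,z_{2m}}$), and Corollary \ref{lem15} then forces $\mu_{i,j,l}$ for $1\le l\le m$ to be uniquely determined by the $\kappa$'s. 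Either route — the formal duality substitution, or a direct rerun of Step 1 of Lemma \ref{lem17} — gives the conclusion, so I would state the dual substitution as the conceptual reason and remark that the computational details are identical to (the first step of) the proof of Lemma \ref{lem17}.

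The main obstacle, such as it is, is purely bookkeeping: making sure that the ``$x\leftrightarrow y$'' symmetry really is an honest symmetry of the presentation at the level where it matters, i.e. that no relation among (R0)--(R9) breaks it in a way that would obstruct transporting the uniqueness conclusion. Since (R0), (R1), (R2), (R9) are manifestly symmetric, (R5)/(R6) swap into each other, (R7)/(R8) swap into each other by construction (this is why both were listed), and (R3)/(R4) swap correctly by Theorem \ref{lem14}, there is no genuine obstruction — the $\epsilon$'s and $\nu$'s in (R0) are not symmetric, but they play no role in the argument, which only ever manipulates commutators and lands in $\gamma_3(G)$. Hence the proof reduces to a one-line appeal: \emph{this is the dual of Lemma \ref{lem18}, proved by the argument of Step 1 of Lemma \ref{lem17} with the roles of the $x_i$'s and $y_i$'s interchanged.}
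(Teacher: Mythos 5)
Your proposal is correct and takes essentially the same route as the paper, which disposes of Lemma \ref{lem19} in a single line as the dual of Lemma \ref{lem18} under the $x_i\leftrightarrow y_i$ symmetry, the computational content being exactly the swapped rerun of Step 1 of Lemma \ref{lem17} (compare Step 2 there), with the $\gen{z_1,\dots,z_m}$-block now the one extracted via the $r=\pm 1$ trick. The only blemish is a transcription slip: for fixed $i,j$ the congruence $[y_i,x_j]\equiv[y_1,x]\pmod{\Z(G)}$ with $x=x_1^{\kappa_{i,j,1}}\cdots x_m^{\kappa_{i,j,m}}$ should lead you to form $[y_1x_j^r,\,y_ix^rk(r)]=1$, not $[y_1x_1^r,\,yx^rk(r)]=1$.
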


We are now ready to prove the uniqueness of $\lambda$'s and $\mu$'s.
\begin{thm}\label{se5-thm4}
In the relations (R7) and (R8),  $\lambda_{i,j,l}$ and
$\mu_{i,j,l}$, $1 \le i, j \le m$, $1 \le l \le 2m$, are
uniquely determined by the structure constants of ${\rm U}_3(p^m)$.
\end{thm}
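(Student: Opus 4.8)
The plan is to leverage Lemmas~\ref{lem17}, \ref{lem18} and~\ref{lem19} together with the symmetry $\kappa_{i,j,l}=\kappa_{j,i,l}$ of the structure constants. By Lemmas~\ref{lem18} and~\ref{lem19}, $\lambda_{i,j,l}$ is already determined for $m+1\le l\le 2m$ and $\mu_{i,j,l}$ for $1\le l\le m$, so only $\lambda_{i,j,l}$ for $1\le l\le m$ and $\mu_{i,j,l}$ for $m+1\le l\le 2m$ remain. For $1\le i,j\le m$ I abbreviate $y^{(i,j)}=y_1^{\kappa_{i,j,1}}\cdots y_m^{\kappa_{i,j,m}}$ and $x^{(i,j)}=x_1^{\kappa_{i,j,1}}\cdots x_m^{\kappa_{i,j,m}}$, and set $t_{i,j}=[x_i,y_j]\,[x_j,y_i]^{-1}$; by~\eqref{eqse5-6} these abbreviations are symmetric in $i$ and $j$, and since $[x_i,y_j]\equiv[x_j,y_i]\pmod{\Z(G)}$ we have $t_{i,j}\in\Z(G)$. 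The first step is to produce three expressions for $t_{i,j}$ as an explicit word in $z_1,\ldots,z_{2m}$.

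Applying Lemma~\ref{lem17}(2) with exponent vector $a_l=\kappa_{i,j,l}$ gives $[x_1,y^{(i,j)}]=[x^{(i,j)},y_1]\prod_{l=1}^{2m}z_l^{c_l}$ with every $c_l$ determined by the structure constants; combining this with relations (R7) and (R8) yields
\[
 t_{i,j}=\prod_{l=1}^{2m}z_l^{\,c_l+\lambda_{i,j,l}-\mu_{i,j,l}}.
\]
Next, writing (R7) for the pair $(i,j)$ and for the pair $(j,i)$, using $y^{(i,j)}=y^{(j,i)}$ and that the $z_l$ are central, I get $t_{i,j}=\prod_{l}z_l^{\lambda_{i,j,l}-\lambda_{j,i,l}}$; similarly, writing (R8) for $(i,j)$ and for $(j,i)$ and using $x^{(i,j)}=x^{(j,i)}$, I get $t_{i,j}=\prod_{l}z_l^{\mu_{j,i,l}-\mu_{i,j,l}}$.

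To finish, I would read off $t_{i,j}$ from the last two expressions: the exponent of $z_l$ in $t_{i,j}$ equals $\lambda_{i,j,l}-\lambda_{j,i,l}$, which is determined for $m+1\le l\le 2m$ by Lemma~\ref{lem18}, and it also equals $\mu_{j,i,l}-\mu_{i,j,l}$, which is determined for $1\le l\le m$ by Lemma~\ref{lem19}. Hence $t_{i,j}$ is completely determined by the structure constants. Substituting back into the first expression: for $1\le l\le m$, the quantities $c_l$, the $z_l$-exponent of $t_{i,j}$, and $\mu_{i,j,l}$ (Lemma~\ref{lem19}) are all known, so $\lambda_{i,j,l}$ is forced; and for $m+1\le l\le 2m$, the quantities $c_l$, the $z_l$-exponent of $t_{i,j}$, and $\lambda_{i,j,l}$ (Lemma~\ref{lem18}) are known, so $\mu_{i,j,l}$ is forced. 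This pins down all the $\lambda$'s and $\mu$'s.

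I do not expect a genuine obstacle here: the real work was already carried out in Lemmas~\ref{lem17}--\ref{lem19}, and what remains is essentially bookkeeping organised by the involution $i\leftrightarrow j$. The only points needing a little care are verifying that (R7) evaluated at $(i,j)$ and at $(j,i)$ present $[x_i,y_j]$ and $[x_j,y_i]$ as $[x_1,y^{(i,j)}]$ times a central factor (this uses $\kappa_{i,j,l}=\kappa_{j,i,l}$ and $\Z(G)\le G'$ abelian), the analogous statement for (R8), and invoking Lemma~\ref{lem17}(2) with a common exponent vector on the $x$-side and the $y$-side.
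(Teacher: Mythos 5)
Your argument is correct and is essentially the paper's proof: both rest on applying Lemma~\ref{lem17} to the common exponent vector $\kappa_{i,j,\cdot}$, invoking Lemmas~\ref{lem18} and~\ref{lem19} for the already-determined blocks of $\lambda$'s and $\mu$'s, and exploiting the symmetry $\kappa_{i,j,l}=\kappa_{j,i,l}$. The only difference is cosmetic bookkeeping: you route the comparison through the central element $t_{i,j}=[x_i,y_j][x_j,y_i]^{-1}$, whereas the paper directly equates (R7) at $(i,j)$ with (R8) at $(j,i)$ to pin down the differences $\mu_{j,i,l}-\lambda_{i,j,l}$ and then splits by the range of $l$.
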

\begin{proof}
The proof involves a careful application of  Lemmas \ref{lem17}, \ref{lem18} and
\ref{lem19}.

 Interchanging  $i$ and $j$ in \eqref{eqse5-13}, and
noting that  $\kappa_{i,j,l} = \kappa_{j,i,l}$, we get
\begin{align}\label{eqse5-14}
[x_i,y_j]=[x_1^{\kappa_{i,j,1}} x_2^{\kappa_{i,j,2}}\cdots
x_m^{\kappa_{i,j,m}},y_1]  \prod_{l=1}^{2m} z_l^{\mu_{j,i,l}}.
\end{align}
By Lemma \ref{lem19}, $\mu_{j,i,l}$,  $1\leq i,j,l\leq m$, are
uniquely determined by the structure constants of ${\rm
U}_3(p^m)$.

By \eqref{eqse5-12} and \eqref{eqse5-14}, we get
$$[x_1,y]=[x,y_1]\prod_{l=1}^{2m} z_l^{\mu_{j,i,l}-\lambda_{i,j,l}},$$
where $x=x_1^{\kappa_{i,j,1}}\cdots x_m^{\kappa_{i,j,m}}$ and
$y=y_1^{\kappa_{i,j,1}}\cdots y_m^{\kappa_{i,j,m}}$.
By Lemma \ref{lem17},
$\mu_{j,i,l}-\lambda_{i,j,l}$,   $ 1\leq i,j,\leq m$, $1 \le l \le 2m$, are
uniquely determined by
the structure constants of ${\rm U}_3(p^m)$.

Hence $\lambda_{i,j,l}$, $ 1\leq i,j,\leq m$, $1 \le l \le m$, are uniquely
determined by the structure constants of ${\rm U}_3(p^m)$. This, together with
Lemma \ref{lem18}, completes the uniqueness of $\lambda$'s.

Similarly,  $\mu_{i,j,l}$, $1\leq i,j\leq m$,  $1 \le l \le 2m$, are uniquely
determined by
the structure constants of ${\rm U}_3(p^m)$.
\end{proof}

Finally we prove the  uniqueness of $\alpha$'s and $\beta$'s in the relations
(R5) and (R6). Although the  proofs are  almost similar to the proofs of the
previous cases,  these can not go verbatim. Thus, we give complete proof of
uniqueness of $\alpha$'s and $\beta$'s.

We need the following preliminary lemma.

\begin{lemma}\label{lem20}
For any $i$ with $1\leq i\leq m$ and  $c_j,c_j', d_j, d_j' \in \mathbb{F}_p$ for
$1 \le j \le m$, assume that the commutator
equations
$$[h_1^{c_1} \cdots h_m^{c_m}, x_i]=z_1^{d_1} \cdots z_m^{d_m}$$
and
$$[h_1^{c_1'} \cdots h_m^{c_m'}, y_i]=z_{m+1}^{d_1'}  \cdots z_{2m}^{d_m'}$$
hold in the group $G$.
Then there exists an $m\times m$ invertible matrix $A$, whose entries are the
structure
constants $\kappa_{i,j,l}$ of ${\rm U}_3(p^m)$, such that
$$\begin{bmatrix} c_1 &  \cdots & c_m\end{bmatrix}A= \begin{bmatrix}d_1
&  \cdots & d_m\end{bmatrix}
$$
and
$$\begin{bmatrix} c_1' &  \cdots & c_m'\end{bmatrix}A = \begin{bmatrix}d_1'
&  \cdots & d_m'\end{bmatrix}
$$
\end{lemma}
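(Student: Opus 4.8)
The plan is to extract the matrix $A$ directly from Theorem \ref{lem14}, which already computes $[h_i,x_j]$ and $[h_i,y_j]$ in terms of the structure constants, and then use bilinearity of the commutator map (valid since $G$ has class $3$, so $G'$ is abelian) to pass from the single generators $h_i$ to an arbitrary product $h_1^{c_1}\cdots h_m^{c_m}$. Concretely, by Theorem \ref{lem14} we have $[h_i,x_j]=z_1^{\kappa_{i,j,1}}z_2^{\kappa_{i,j,2}}\cdots z_m^{\kappa_{i,j,m}}$, and in particular, taking $j=1$ and recalling relation (R9) that $[h_1,x_i]=z_i$, we get $[h_i,x_1]=[h_1,x_i]=z_i$, so the matrix governing the map $h_i\mapsto [h_i,x_1]$ is just the identity. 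The point is rather to understand the map $h_i \mapsto [h_i, x_j]$ for a \emph{fixed} $j$; I would instead fix $x_i$ and define $A=(a_{jl})$ by $a_{jl}=\kappa_{i,j,l}$, so that $[h_j,x_i]=[h_i,x_j]=\prod_{l=1}^m z_l^{\kappa_{i,j,l}}=\prod_{l=1}^m z_l^{a_{jl}}$ for $1\le j\le m$.

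Next I would expand $[h_1^{c_1}\cdots h_m^{c_m},x_i]$ using the commutator identities recalled before Lemma \ref{lem17}: since all the $z_l$ lie in $\Z(G)$ and $G'$ is elementary abelian of exponent $p$, the higher terms $[a,b,a]^{\binom{c}{2}}$ vanish (they land in $\gamma_3(G)=\Z(G)$, and in fact the relevant repeated commutators $[h_j,x_i,\,\cdot\,]$ are trivial because $[h_j,x_i]\in\Z(G)$), giving the clean multilinear formula
\[
[h_1^{c_1}\cdots h_m^{c_m},x_i]=\prod_{j=1}^m [h_j,x_i]^{c_j}=\prod_{j=1}^m\prod_{l=1}^m z_l^{c_j\kappa_{i,j,l}}=\prod_{l=1}^m z_l^{\sum_j c_j a_{jl}}.
\]
Comparing with the hypothesis $[h_1^{c_1}\cdots h_m^{c_m},x_i]=z_1^{d_1}\cdots z_m^{d_m}$ and using that $z_1,\ldots,z_m$ are independent (they are part of a minimal generating set of the elementary abelian group $\Z(G)$, by Lemma \ref{lem9} and the setup of Lemma \ref{lem10}), we read off $d_l=\sum_{j}c_j a_{jl}$, i.e. $\begin{bmatrix} c_1 & \cdots & c_m\end{bmatrix}A=\begin{bmatrix}d_1 & \cdots & d_m\end{bmatrix}$. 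The second identity, involving $y_i$ and $z_{m+1},\ldots,z_{2m}$, follows verbatim from the second half of Theorem \ref{lem14}, which shows $[h_j,y_i]=\prod_{l=1}^m z_{m+l}^{\kappa_{i,j,l}}$ with the \emph{same} constants $\kappa_{i,j,l}$; hence the same matrix $A$ works.

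It remains to argue that $A$ is invertible. Here I would use that in $G$ we have $\C_G(h_1)=G'$ (shown in the proof of Lemma \ref{lem10}: $G$ has conjugate type $(1,p^{2m})$, $[G:G']=p^{2m}$, and $G'$ is abelian), and more generally that any nontrivial $h=h_1^{c_1}\cdots h_m^{c_m}$ with some $c_j\ne 0$ lies in $G'\setminus\Z(G)$ provided $(c_1,\ldots,c_m)\ne 0$ — this is because $h_1,\ldots,h_m$ are independent modulo $\Z(G)$ and generate $G'$ modulo $\Z(G)$ (Lemma \ref{lem9} / the setup of Lemma \ref{lem10}). For such $h$ we have $\C_G(h)=G'$, so $h$ does not centralize $x_i$ for every $i$; in particular, as the $2m$ commutators $[h_1,x_1],\ldots,[h_1,x_m],[h_1,y_1],\ldots,[h_1,y_m]$ are independent (proof of Lemma \ref{lem10}), the map sending $(c_1,\ldots,c_m)$ to the exponent vector of $[h_1^{c_1}\cdots h_m^{c_m},x_i]$ together with that of $[h_1^{c_1}\cdots h_m^{c_m},y_i]$ is injective. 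The main obstacle is making this non-degeneracy argument for a \emph{single} fixed $i$: injectivity of the combined $2m$-dimensional map is immediate, but one needs that already the $m\times m$ block $A$ is nonsingular. I would obtain this by a dimension/rank count: the map $h\mapsto ([h,x_1],\ldots,[h,x_m])$ from $G'/\Z(G)$ (dimension $m$) into $\langle z_1,\ldots,z_m\rangle$ (dimension $m$) has trivial kernel — a nonzero $h\in G'\setminus\Z(G)$ with $[h,x_i]=1$ for all $i$ would, together with $[h,h_j]=1$ and (R9), centralize a generating set of $G$, forcing $h\in\Z(G)$, a contradiction — hence it is an isomorphism and $A$ is invertible. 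The same kernel computation with the $y_i$'s confirms consistency, completing the proof.
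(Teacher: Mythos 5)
Your derivation of the two linear relations is correct and is essentially the paper's: by Theorem \ref{lem14} together with the symmetry $\kappa_{i,j,l}=\kappa_{j,i,l}$ you have $[h_j,x_i]=\prod_{l=1}^m z_l^{\kappa_{i,j,l}}$ and $[h_j,y_i]=\prod_{l=1}^m z_{m+l}^{\kappa_{i,j,l}}$, and since these commutators are central the map $h\Z(G)\mapsto [h,x_i]$ is linear on $G'/\Z(G)$, so comparing exponents of the independent elements $z_1,\ldots,z_{2m}$ gives $\begin{bmatrix}c_1&\cdots&c_m\end{bmatrix}A=\begin{bmatrix}d_1&\cdots&d_m\end{bmatrix}$ and likewise for the primed data, with $a_{j,l}=\kappa_{i,j,l}$ — the same matrix the paper writes down.

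The gap is in your invertibility argument. The matrix $A$ encodes the map $h\Z(G)\mapsto[h,x_i]$ for the \emph{single fixed} index $i$, so what must be shown is that this one map is injective. Your substitute argument about the combined map $h\mapsto([h,x_1],\ldots,[h,x_m])$ does not do this: its codomain is the direct sum of $m$ copies of $\langle z_1,\ldots,z_m\rangle$ (dimension $m^2$, not $m$), injectivity of the combined map would not make any single block nonsingular, and even its kernel computation is faulty — an $h$ with $[h,x_j]=1$ for all $j$ centralizes only the $x_j$'s and $G'$, which is \emph{not} a generating set of $G$ (the $y_j$'s are missing), so "centralizes a generating set, hence $h\in\Z(G)$" does not follow as stated. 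The fact you quote but do not exploit closes the gap immediately: for $h\in G'\setminus\Z(G)$ one has $\C_G(h)=G'$ (equivalently $\C_G(x_i)\cap G'=\Z(G)$), and since $x_i\notin G'$ this already forces $[h,x_i]\neq 1$ for the single fixed $i$. Hence the fixed-$i$ map $G'/\Z(G)\to\langle z_1,\ldots,z_m\rangle$ has trivial kernel and, by dimension count, is an isomorphism, so $A$ is invertible; the same argument with $y_i$ and $z_{m+1},\ldots,z_{2m}$ handles the second relation. This is exactly the paper's proof: $h_1,\ldots,h_m$ are independent modulo $\C_G(x_i)$, so $[h_1,x_i],\ldots,[h_m,x_i]$ form another basis of $\langle z_1,\ldots,z_m\rangle$ and $A$ is the corresponding change-of-basis matrix. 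In short, the "obstacle for a single fixed $i$" you perceived is not there, and the workaround you offered in its place does not prove what is needed.
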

\begin{proof}
Since $h_1,h_2,\ldots, h_m$ are independent modulo $C_G(x_i)$, it follows that
$[h_1,x_i]$, $[h_2,x_i]$, $\ldots$, $[h_m,x_i]$ are independent, and lie
in
$\langle z_1,z_2,\ldots, z_m\rangle$ (by Theorem \ref{lem14}).
In other words,
$\{[h_1,x_i], [h_2,x_i],\ldots, [h_m,x_i]\} $ is also a basis for the vector
space $\langle z_1,z_2,\ldots, z_m\rangle$ over $\mathbb{F}_p$.
From Theorem \ref{lem14}, $[h_j,x_i]=z_1^{\kappa_{j,i,1}}
z_2^{\kappa_{j,i,2}}\cdots
z_m^{\kappa_{j,i,m}}$. The desired matrix $A$ is the matrix of change of basis
from $\{z_1,\ldots, z_m\}$ to $\{[h_1,x_i],$ $ [h_2,x_i],\ldots, [h_m,x_i]\}$,
which is given by
$$A = (a_{r,s}), $$
where $a_{r,s} =  \kappa_{r,i,s}$ for $1 \le r, s \le m$.

Changing $x_i$ by $y_i$ and $z_i$ by $z_{m+i}$, similarly, we get the second
assertion.
\end{proof}

\begin{thm}\label{se5-thm5}
In the relations (R5) and (R6),   $\alpha_{i,j,l}$  and $\beta_{i,j,l}$, $1 \le
i, j \le m$, $1 \le l \le 2m$,  are uniquely determined  by
the structure constants of ${\rm U}_3(p^m)$.
\end{thm}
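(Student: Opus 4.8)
The plan is to mimic the strategy used for $\lambda$'s and $\mu$'s (Theorem \ref{se5-thm4}), but now applied to the relations (R5) and (R6), exploiting the $\U_3(p^m)$-relation $[X_i,X_j]=1$ and the Hall--Witt consequence \eqref{eqse3-5a} to squeeze out the $z_l$-coefficients. Fix $i,j$ with $1 \le i,j \le m$. From Lemma \ref{lem16}, $[x_i,x_j] \in \gen{z_1,\ldots,z_m}$ and $[y_i,y_j] \in \gen{z_{m+1},\ldots,z_{2m}}$, so the $2m$ unknowns $\alpha_{i,j,l}$ really split into $m$ "top" unknowns (with $1 \le l \le m$) and $m$ vanishing ones, and dually for $\beta$; this already halves the work. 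The remaining task is to pin down $\alpha_{i,j,l}$ for $1 \le l \le m$ and $\beta_{i,j,l}$ for $m+1 \le l \le 2m$.

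The key step is this: since $[x_i,x_j] \in \Z(G)$, Corollary \ref{se3-cor1} provides $h \in G'$ with $[x_i,x_jh]=1$, whence $[x_i,x_j]=[h,x_i]^{-1}$. Writing $h \equiv h_1^{c_1}\cdots h_m^{c_m} \pmod{\Z(G)}$ (using $G'=\gen{h_1,\ldots,h_m,\Z(G)}$), Lemma \ref{lem20} converts $[h,x_i]=\prod_{l=1}^m z_l^{d_l}$ into the linear relation $[c_1\ \cdots\ c_m]A = [d_1\ \cdots\ d_m]$ with $A$ invertible and built from the $\kappa$'s; so once the $c_l$'s are known in terms of the structure constants, the $\alpha_{i,j,l} = -d_l$ are determined. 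To compute the $c_l$'s I would track the element $h$ more carefully: because $[x_i,x_jh]=1$ and also $[x_1,x_i]=[x_1,x_j]=1$ after the normalization of Lemma \ref{lem12}, I can relate $h$ (mod $\Z(G)$) to commutators $[x_1,y_\bullet]$ via the surjection $\varphi$ onto $\U_3(p^m)$ — indeed in $\U_3(p^m)$ one has $[X_i,X_j]=1$, so $h$ projects trivially and its lift is controlled by the already-determined data. The point is that $h \bmod \Z(G)$, being an element of $G'/\Z(G) \cong \U_3(p^m)'$, is forced by the equation $[X_i,X_jH']=1$ in $\U_3(p^m)/\!\cdot$; since $\U_3(p^m)$ has a fixed presentation, there is a unique such coset, hence the $c_l$'s (modulo the kernel of the relevant map) are determined, and feeding these into Lemma \ref{lem20} yields the $\alpha$'s. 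The argument for $\beta_{i,j,l}$, $m+1 \le l \le 2m$, is the mirror image under the $x_i \leftrightarrow y_i$, $z_l \leftrightarrow z_{m+l}$ symmetry, using the second half of Lemma \ref{lem20}.

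The main obstacle I anticipate is exactly what the paper flags: unlike the $\lambda$, $\mu$ case, the elements $h, h_1$ whose commutators we expand do \emph{not} obviously lie outside $G'$, so Corollary \ref{se3-cor1} (which needs non-central arguments) and the identity \eqref{eqse3-5a} cannot be applied in quite the same pattern, and one must instead exploit that $[x_i,x_j]$ already sits in $\Z(G)=\gamma_3(G)$ together with the rigidity of the centralizer structure from the Corollary after Lemma \ref{lem4}. Concretely, the delicate point is showing that the auxiliary element $h \in G'$ appearing in $[x_i,x_jh]=1$ is uniquely determined modulo $\Z(G)$ by the structure constants — this uses $\C_G(x_i)\cap G' = \Z(G)$, so that if $[x_i,x_jh]=[x_i,x_jh']=1$ then $h h'^{-1} \in \C_G(x_i)\cap G' = \Z(G)$, forcing $h \equiv h' \pmod{\Z(G)}$. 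Once that uniqueness is in hand, Lemma \ref{lem20} mechanically produces the $\alpha$'s and $\beta$'s, and the proof closes. I would also double-check the characteristic hypothesis $p > 2$ is only needed where the binomial-coefficient identities of the commutator calculus are invoked, which here is minimal since $[x_i,x_j]$ is central.
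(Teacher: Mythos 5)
Your proposal has the right supporting pieces (Lemma \ref{lem16} to kill the coordinates $\alpha_{i,j,l}$, $l>m$, and $\beta_{i,j,l}$, $l\le m$; Corollary \ref{se3-cor1} to write $[x_i,x_j]=[h,x_i]$ with $h\in G'$; Lemma \ref{lem20} to convert exponent vectors), but the central step is circular, so there is a genuine gap. Writing $h\equiv h_1^{c_1}\cdots h_m^{c_m}\pmod{\Z(G)}$, the relation $[h,x_i]=[x_i,x_j]=\prod_{l\le m}z_l^{\alpha_{i,j,l}}$ together with Lemma \ref{lem20} says exactly that the vector $(c_l)$ is the image of the unknown vector $(\alpha_{i,j,l})$ under an invertible matrix built from the $\kappa$'s; so determining the $c_l$'s is literally the same problem as determining the $\alpha$'s, and nothing has been gained. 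Your proposed way out --- that the coset of $h$ is ``forced by the equation $[X_i,X_j\varphi(h)]=1$ in ${\rm U}_3(p^m)$'' --- fails, because $\varphi(G')$ lies in ${\rm U}_3(p^m)'=\Z({\rm U}_3(p^m))$, so that equation holds identically for every element of $G'$ and imposes no constraint on $\varphi(h)$. Likewise, your uniqueness observation ($hh'^{-1}\in\C_G(x_i)\cap G'=\Z(G)$) only shows $h$ is unique mod $\Z(G)$ inside one fixed group $G$; it does not show the $c_l$'s are independent of the unknown parameters, which is what ``determined by the structure constants'' requires.

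What is missing is the cross-linkage the paper uses to express $h$ (its $\hat k$) in terms of data already known to be unique. Starting from $[x_i,y_j]\equiv[x_j,y_i]\pmod{\Z(G)}$ (a consequence of (R7) and the symmetry $\kappa_{i,j,l}=\kappa_{j,i,l}$) and $[x_i,x_j]\equiv[y_i,y_j]\equiv 1\pmod{\Z(G)}$, one gets $[x_iy_i^r,x_jy_j^rk(r)]=1$ for all $r\in\mathbb{F}_p$ with $k(r)\in G'$. Expanding with the class-$3$ identities and separating the $\langle z_1,\ldots,z_m\rangle$ and $\langle z_{m+1},\ldots,z_{2m}\rangle$ components (via Theorem \ref{lem14} and Lemma \ref{lem16}) shows $k(r)\equiv\tilde k^{\,r}\hat k\pmod{\Z(G)}$ with $\tilde k,\hat k$ independent of $r$; evaluating at $r=1$ and $r=-1$ and eliminating then isolates $[y_i,\hat k]$ as a product of quantities already determined by the structure constants (triple commutators via Corollary \ref{lem15} and the differences $\lambda_{j,i,l}-\lambda_{i,j,l}$ via Theorem \ref{se5-thm4}). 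Only then does Lemma \ref{lem20} --- applied to the commutator with $y_i$, not $x_i$ --- pin down the exponents of $\hat k$, after which $[x_i,x_j]=[\hat k,x_i]$ yields the $\alpha$'s (and the mirror argument the $\beta$'s). Note also that this route genuinely uses $p>2$ (the evaluations at $r=\pm1$ and the exponents $1/2$), contrary to your closing remark that the characteristic hypothesis is barely needed here.
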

\begin{proof}
Fix $i,j$ with $1\leq i,j\leq m$. Consider the relations (R7):
\begin{align*}
[x_i,y_j]=[x_1, y_1^{\kappa_{i,j,1}} y_2^{\kappa_{i,j,2}}\cdots
y_m^{\kappa_{i,j,m}}]  \prod_{l=1}^{2m} z_l^{\lambda_{i,j,l}}.
\end{align*}
Since $\kappa_{i,j,l}=\kappa_{j,i,l}$, interchanging $i$ and $j$,   we get
\begin{align*}
[x_j,y_i]=[x_1, y_1^{\kappa_{i,j,1}} y_2^{\kappa_{i,j,2}}\cdots
y_m^{\kappa_{i,j,m}}] \prod_{l=1}^{2m} z_l^{\lambda_{j,i,l}}.
\end{align*}

From the preceding  two equations, we obtain
\begin{equation}\label{eqse5-15}
[x_i,y_j]=[x_j,y_i]\prod_{l=1}^{2m}
z_l^{\lambda_{i,j,l}-\lambda_{j,i,l}};
\end{equation}
hence $[x_i,y_j]\equiv [x_j,y_i]\pmod{\Z(G)}$. Since
$[x_i,x_j]\equiv [y_i,y_j]\equiv 1\pmod{\Z(G)}$, it follows that $[x_iy_i^r,
x_jy_j^r]\equiv
1$ $\pmod{\Z(G)}$ for all $r \in {\mathbb{F}_p}$. By Corollary \eqref{se3-cor1},
there
exists an element $k(r)$, depending on $r$, in $G'$ such that
$[x_iy_i^r, x_jy_j^rk(r)]=1$. Then
\begin{align*}
1  =  [x_iy_i^r, x_jy_j^rk(r)]
= &[x_i,x_j] [x_i,y_j]^r [x_i,y_j,y_j]^{\binom{r}{2}}
[x_i,y_j,y_i]^{r^2}[x_i,k(r)]\\
 & [y_i,x_j]^r[ y_i,x_j,y_i]^{\binom{r}{2}} [y_i,x_j,y_j]^{r^2} [y_i,y_j]^{r^2}
[y_i,k(r)]^r
\end{align*}

 Using \eqref{eqse5-15}, we get
\begin{align*}
\Big{(} \prod_{l=1}^{2m}
z_l^{\lambda_{i,j,l}-\lambda_{j,i,l}}\Big{)}^{-r}=
&\Big{(} [x_i, x_j][x_i,k(r)]\Big{)}
\Big{(} [x_i,y_j,y_j]^{\binom{r}{2}} [x_i,y_j,y_i]^{r^2} \\
& [ y_i,x_j,y_i]^{\binom{r}{2}} [y_i,x_j,y_j]^{r^2}[y_i, y_j]^{r^2}
[y_i,k(r)]^r \Big{)}.
\end{align*}

Using Theorem \ref{lem14} and Lemma \ref{lem16}, it follows that
$[x_i, x_j] [x_i,k(r)] \in \langle z_1,\ldots, z_m\rangle$
and the rest of the terms in the right side of the preceding equation belong to
$\langle z_{m+1}$, $\ldots$, $z_{2m}$ $\rangle$.
Thus
\begin{align}\label{eqse5-16}
\Big{(} \prod_{l=1}^{m}
z_l^{\lambda_{j,i,l}-\lambda_{i,j,l}}\Big{)}^{r}=[x_i,x_j]
[x_i,k(r)]
\end{align}
and
\begin{align}\label{eqse5-17}
\Big{(} \prod_{l=m+1}^{2m}
z_l^{\lambda_{j,i,l}-\lambda_{i,j,l}}\Big{)}^{r}= &[x_i,y_j,y_j]^{\binom{r}{2}}
[x_i,y_j,y_i]^{r^2}  [ y_i,x_j,y_i]^{\binom{r}{2}}\\ & [y_i,x_j,y_j]^{r^2}
[y_i,y_j]^{r^2}
 [y_i,k(r)]^r. \nonumber
\end{align}

Since $[h_1,x_i], [h_2,x_i],\ldots, [h_m,x_i]$ are independent and belong to
$\langle z_1,z_2,\ldots,z_m\rangle$ (see Theorem \ref{lem14}), we get
\begin{align*}
\langle z_1,z_2,\ldots, z_m\rangle =\langle [h_1,x_i], [h_2,x_i],\ldots.
[h_m,x_i]\rangle,
\end{align*}
Hence there exists $\tilde{k}\in G'$, independent of $r$,
such that $\prod_{l=1}^{m}
z_l^{-\lambda_{i,j,l}+\lambda_{j,i,l}}=[x_i,\tilde{k}]$. Similarly
there exists $\hat{k}\in G'$,
independent of $r$, such that
\begin{equation}\label{eqse5-18}
[x_i,x_j]=[x_i,\hat{k}^{-1}].
\end{equation}
Therefore from \eqref{eqse5-16}, we
get $[x_i,\tilde{k}]^r=[x_i,\hat{k}^{-1}][x_i,k(r)]$, which implies that
$\tilde{k}^r\hat{k}k(r)^{-1}\in C_G(x_i)\cap G'=\Z(G)$. Hence
$$k(r)\equiv \tilde{k}^r\hat{k}\pmod{\Z(G)}.$$

Using this in \eqref{eqse5-17}, we get
\begin{align*}
 \prod_{l=m+1}^{2m} z_l^{r(\lambda_{j,i,l}-\lambda_{i,j,l})}=
[x_i,y_j,y_j]^{\binom{r}{2}} [x_i,y_j,y_i]^{r^2}
[ y_i,x_j,y_i]^{\binom{r}{2}}
[y_i,x_j,y_j]^{r^2}[y_i,\tilde{k}]^{r^2}[y_i,y_j]^{r^2}[y_i,\hat{k}]^r.
\end{align*}
Writing this equation for $r=1$ and $r=-1$, we get
\begin{align*}
\prod_{l=m+1}^{2m} z_l^{\lambda_{j,i,l}-\lambda_{i,j,l}}=&
 [x_i,y_j,y_i][y_i,x_j,y_j][y_i,\tilde{k}][y_i,y_j][y_i,\hat{k}]
\end{align*}
and
\begin{align*}
\prod_{l=m+1}^{2m} z_l^{\lambda_{i,j,l}-\lambda_{j,i,l}}=[x_i,y_j,y_j]
[x_i,y_j,y_i] [
y_i,x_j,y_i][y_i,x_j,y_j][y_i,\tilde{k}][y_i,y_j][y_i,\hat{k}]^{-1}.
\end{align*}

Preceding two equations give
$$\prod_{l=m+1}^{2m} z_l^{2(\lambda_{j,i,l}-\lambda_{i,j,l})} =
[x_i,y_j,y_j]^{-1} [y_i,x_j,y_i]^{-1} [y_i,\hat{k}]^2.$$
Rearranging the terms, we get
$$\prod_{l=m+1}^{2m} z_l^{(\lambda_{j,i,l}-\lambda_{i,j,l})}
[x_i,y_j,y_j]^{1/2} [x_i,y_j,y_i]^{1/2} =[y_i,\hat{k}].$$
Note that the left side of the preceding equation is of the form
$z_{m+1}^{d_1} \cdots z_{2m}^{d_m}$. By Corollary \ref{lem15} and
Theorem \ref{se5-thm4}, $d_i$'s are uniquely determined  by the structure
constants of
${\rm U}_3(p^m)$. Let $\hat{k} =
h_1^{c_1} \cdots h_m^{c_m}\pmod{\Z(G)}.$ Thus, we have the commutator equation
$$ z_{m+1}^{d_1} \cdots z_{2m}^{d_m}=[y_i,h_1^{c_1} \cdots h_m^{c_m}].$$
By Lemma \ref{lem20}, it follows that $c_i$'s are uniquely determined by the
structure constants of ${\rm U}_3(p^m)$.

By  Lemma \ref{lem16}, $\alpha_{i,j,l} = 0$ for $l
> m$. Then by \eqref{eqse5-18}, we have
$$\prod_{l=1}^m
z_l^{\alpha_{i,j,l}}= [x_i,x_j] =[\hat{k}, x_i]=[h_1^{c_1} \cdots h_m^{c_m},
x_i].$$
Again, by Lemma \ref{lem20}, it follows that $\alpha_{i,j,l}$,  $1 \le i,j \leq
m$, $1 \le l \le 2m$, are uniquely determined by the structure constants of
${\rm U}_3(p^m)$.

That $\beta_{i,j,l}$,  $1 \le i,j \leq m$, $1 \le l \le
2m$, are uniquely determined  by the structure constants of
${\rm U}_3(p^m)$, follows on the same lines.
\end{proof}
Before proceeding for the proof of Theorem \ref{se5-thm1}, we summarize the
preceding discussion of this section.

For simplicity, fix a prime $p>2$, an
integer $m\ge 2$, and a finite $p$-group $H$ of
nilpotency class $3$ and of conjugate type $(1,p^{2m})$. Then, there exist
\begin{align*}
 \alpha_{i,j,l}, ~~ \beta_{i,j,l},~~
\gamma_{i,j,l},~~\delta_{i,j,l}, \lambda_{i,j,l},~~\mu_{i,j,l}, ~~\epsilon_{
i,l}, ~~ \nu_{i,l} \in\mathbb{F}_p \hskip5mm (1\le i,j\le m, ~~ 1\le l\le 2m),
\end{align*}
such that $H$ is isoclinic to the group $G$ with presentation as in Lemma
\ref{lem10}. By Theorems \ref{lem14}, \ref{se5-thm4} and \ref{se5-thm5}, 
$\alpha_{i,j,l}, ~~ \beta_{i,j,l},~~
\gamma_{i,j,l},~~\delta_{i,j,l}, \lambda_{i,j,l},~~\mu_{i,j,l},$ are uniquely
determined.
%
%
In particular, the isoclinism type of the group $H$ depends only on
$\epsilon_{i,l}$ and $\nu_{i,l}$, $1\le i,j\le m, ~~ 1\le l\le 2m$.

Thus, for the proof of Theorem \ref{se5-thm1}, we fix those unique 
$$\alpha_{i,j,l}, ~~ \beta_{i,j,l},~~
\gamma_{i,j,l},~~\delta_{i,j,l}, \lambda_{i,j,l},~~\mu_{i,j,l} \in
\mathbb{F}_p \hskip5mm (1\le i,j\le m, ~~ 1\le l\le 2m),$$
for which, a group given by the presentation as in Lemma \ref{lem10} is of conjugate
type $(1,p^{2m})$, for some $\epsilon$'s and $\nu$'s in $\mathbb{F}_p$. Then we
denote the resulting group with presentation as in Lemma \ref{lem10} by
$G(\epsilon,\nu)$.

\begin{lemma}\label{lem-final}
For any choice of $\epsilon_{i,l}, \nu_{i,l}, \epsilon_{i,l}',\nu_{i,l}' \in
\mathbb{F}_p$,
$1\le i\le m, ~~ 1\le l\le 2m$, the groups $G(\epsilon,\nu)$ and
$G(\epsilon',\nu')$ are isoclinic.
\end{lemma}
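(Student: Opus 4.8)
The strategy is to exploit what Theorems \ref{lem14}, \ref{se5-thm4} and \ref{se5-thm5} have already given us: the constants $\alpha,\beta,\gamma,\delta,\lambda,\mu$ occurring in relations (R2)--(R9) are forced by the structure constants of ${\rm U}_3(p^m)$ and therefore are the \emph{same} for $G(\epsilon,\nu)$ and $G(\epsilon',\nu')$; the two presentations differ only in the power relations $x_i^p=\prod_l z_l^{\epsilon_{i,l}}$, $y_i^p=\prod_l z_l^{\nu_{i,l}}$ of (R0). Since $p$-th powers of elements are invisible to the isoclinism data, the plan is to realise the isoclinism by the ``identity'' correspondence of generators and to check that the defining square commutes.

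Concretely, write $G_1=G(\epsilon,\nu)$ and $G_2=G(\epsilon',\nu')$. By Lemmas \ref{lem4}, \ref{lem9} and the construction in Lemma \ref{lem10}, for $t=1,2$ we have $\Z(G_t)=\gamma_3(G_t)=\gen{z_1,\dots,z_{2m}}$, the subgroup $G_t'=\gen{h_1,\dots,h_m,z_1,\dots,z_{2m}}$ is elementary abelian of order $p^{3m}$, and $\varphi$ induces an isomorphism $\overline{G}_t=G_t/\Z(G_t)\xrightarrow{\,\sim\,}{\rm U}_3(p^m)$ carrying $\overline{x}_i\mapsto X_i$, $\overline{y}_i\mapsto Y_i$, $\overline{h}_i\mapsto H_i$. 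Composing these gives an isomorphism $\phi\colon\overline{G}_1\to\overline{G}_2$ with $\phi(\overline{x}_i)=\overline{x}_i$, $\phi(\overline{y}_i)=\overline{y}_i$, $\phi(\overline{h}_i)=\overline{h}_i$, and the assignment $h_i\mapsto h_i$, $z_l\mapsto z_l$ defines an isomorphism $\theta\colon G_1'\to G_2'$ of elementary abelian groups. It then remains to verify that $\theta\circ a_{G_1}=a_{G_2}\circ(\phi\times\phi)$; equivalently, after identifying $\overline{G}_1=\overline{G}_2={\rm U}_3(p^m)$ and $G_1'=G_2'$ via these maps, that $a_{G_1}=a_{G_2}$ as maps ${\rm U}_3(p^m)\times{\rm U}_3(p^m)\to G_1'$.

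To establish this last equality I would show that $a_{G_t}$ is completely pinned down by data that is the same for $t=1$ and $t=2$. First, because $\gamma_4(G_t)=1$, the map $c_t\colon G_t'\times\overline{G}_t\to\gamma_3(G_t)$, $c_t(w,\overline{g})=[w,g]$, is well defined and $\mathbb{Z}$-bilinear, hence determined by the values $c_t(h_i,\overline{x}_j)$ and $c_t(h_i,\overline{y}_j)$ --- these are precisely relations (R3) and (R4), while the values on $\overline{h}_i$ and on the $z_l$ come from (R2) and (R1). Thus $c_1=c_2$. Next, for $g_1,g_2,h\in G_t$ one has $a_{G_t}(g_1g_2,h)=a_{G_t}(g_1,h)\,a_{G_t}(g_2,h)\,c_t(a_{G_t}(g_1,h),\overline{g}_2)$, and the analogous identity in the second variable; so by induction on the length of a word in the generators $X_i,Y_i$ of ${\rm U}_3(p^m)$, $a_{G_t}$ is determined by $c_t$ together with its values on the generating pairs, i.e.\ by the commutators $[x_i,x_j]$, $[y_i,y_j]$, $[x_i,y_j]$, $[x_j,y_i]$, $[x_1,y_i]$, $[h_1,x_i]$, $[h_1,y_i]$ --- that is, by relations (R5)--(R9). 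By Theorems \ref{lem14}, \ref{se5-thm4} and \ref{se5-thm5} (and by (R9) directly) all of these coincide for $G_1$ and $G_2$. Hence $a_{G_1}=a_{G_2}$, the isoclinism square commutes, and $G_1$ is isoclinic to $G_2$.

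The step needing the most care is the inductive identification of $a_{G_t}$: the recursion lives on $\overline{G}_t={\rm U}_3(p^m)$, so one must be sure it is consistent with the relations of ${\rm U}_3(p^m)$ (this is automatic, since $a_{G_t}$ already exists as an honest function and the product formula merely expresses its value at a longer word through values at shorter ones) and that the element fed into $c_t$ genuinely lies in $G_t'$. The conceptual heart of the lemma, worth saying out loud, is that relations (R0) never enter any of these computations at all: a commutator of words in the generators expands into a product of iterated commutators of those generators and never produces a bare power $x_i^p$ or $y_i^p$, so the parameters $\epsilon,\nu$ cannot affect the isoclinism type. (One may also package the argument by introducing the group $F$ given by the presentation of Lemma \ref{lem10} with the relations $x_i^p=\prod_l z_l^{\epsilon_{i,l}}$, $y_i^p=\prod_l z_l^{\nu_{i,l}}$ deleted, checking $F$ has class $3$ with $F'$ elementary abelian, and noting that each $G(\epsilon,\nu)$ is a quotient of $F$ by a central subgroup meeting $F'$ trivially; two such quotients are then isoclinic to $F$, hence to each other.)
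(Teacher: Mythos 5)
Your proposal is correct and is essentially the paper's own argument: you take the same $\phi$ (identity correspondence on the images of $x_i,y_i,h_i$ modulo the centers) and the same $\theta$ ($h_i\mapsto h_i$, $z_l\mapsto z_l$), and verify commutativity of the isoclinism square using the fact that all commutator relations (R1)--(R9) are identical in $G(\epsilon,\nu)$ and $G(\epsilon',\nu')$, the $\epsilon$'s and $\nu$'s entering only the power relations (R0). Your bilinearity-plus-induction reduction of $a_{G_t}$ to its values on generating pairs is simply an explicit version of the ``routine calculation'' the paper leaves to the reader, so the two proofs coincide in substance.
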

\begin{proof}
Consider the
presentation of $G(\epsilon,\nu)$  as in Lemma \ref{lem10}. To distinguish the generators of $G(\epsilon,\nu)$ and $G(\epsilon',\nu')$, we write the presentation of $G(\epsilon',\nu')$ as in Lemma \ref{lem10}, where we replace 
 $x_i$ by $\hat{x}_i$, $y_i$ by $\hat{y}_i$, $h_i$ by $\hat{h}_i$, and $z_l$ by
$\hat{z}_l$ for $1\le i\le m$, $1\le l\le 2m$. For simplicity, we denote the groups $G(\epsilon,\nu)$ and
$G(\epsilon',\nu')$ by $G_1$ and $G_2$ respectively. 

It follows, from the construction of $G_1$ and $G_2$, that  the map
$$x_i\Z(G_1)\mapsto \hat{x}_i\Z(G_2), \hskip3mm y_i\Z(G_1)\mapsto
\hat{y}_i\Z(G_2),
\hskip3mm h_i\Z(G_1)\mapsto \hat{h}_i\Z(G_2)$$
extends to an isomorphism $\phi:G_1/\Z(G_1)\rightarrow
G_2/\Z(G_2)$. Since $G_1'$ and $G_2'$ are elementary abelian, it is clear that
the map
$$h_i\mapsto \hat{h}_i, \hskip3mm z_l\mapsto \hat{z}_l$$
extends to an isomorphism $\theta:G_1'\rightarrow G_2'$.
Consider the diagram
\[
 \begin{CD}
   \overline G_1 \times \overline G_1  @>a_{G_1}>> G_1'\\
   @V{\phi\times\phi}VV        @VV{\theta}V\\
   \overline G_2 \times \overline G_2 @>a_{G_2}>> G_2',
  \end{CD}
\]
where $a_{G_1}$ and $a_{G_2}$ are the commutation maps as defined in Section
2.

From the commutator relations of $G_1$ and $G_2$ (that is, the relations
(R1)-(R9) of
$G_1$, and correspondingly those of $G_2$), it follows that the above
diagram commutes for the generators of $G_1$ and $G_2$ taken in their
presentation. A routine calculation now shows that the diagram commutes. 
\end{proof}

Stitching all the above pieces together, we get
\vskip3mm\noindent
{\it Proof of Theorem \ref{se5-thm1}.} For a prime $p >2$ and integer
$n=2m\geq 2$,
let $H$ be a finite $p$-group of nilpotency class $3$ and of conjugate type
$(1,p^{n})$. If $m=1$, then the result follows from
 \cite[Theorem 4.2]{Ishikawa1999}. Thus, we can assume that  $m\geq 2$.

By Lemma \ref{lem10}, there exist $\alpha$'s, $\beta$'s, $\gamma$'s,
$\delta$'s, $\lambda$'s, $\mu$'s, $\epsilon$'s and $\nu$'s in $\mathbb{F}_p$
such that $H$ is isoclinic to a group $G$ with the presentation as in Lemma
\ref{lem10}. By Theorems \ref{lem14}, \ref{se5-thm4} and \ref{se5-thm5},
$\alpha$'s, $\beta$'s, $\gamma$'s, $\delta$'s, $\lambda$'s, and $\mu$'s are
uniquely determined by the structure constants of ${\rm U}_3(p^m)$.
By Lemma \ref{lem-final}, the isoclinism type of $G$ is independent of the
choice of $\epsilon$'s and $\nu$'s in $\mathbb{F}_p$.

Thus, for any $m\geq 1$, $H$ is uniquely determined up to isoclinism, and
hence is isoclinic to the group $\mathcal{H}_m/\Z(\mathcal{H}_m)$ (see Section \ref{example}).
\hfill$\square$
\vskip3mm
\noindent {\bf Acknowledgement.} We thank Prof. C. M. Scoppola for letting us
know the examples of finite $p$-groups of nilpotency class $3$ and conjugate
type $(1, p^{2m})$ constructed in \cite{Dark-Scoppola}.


\begin{thebibliography}{1}
\bibitem{Dark-Scoppola}
R. Dark and C. M. Scoppola, {\it On Camina groups of prime power order},
J. Algebra, {\bf 181}, 787-802 (1996).

\bibitem{Hall40}
P. Hall, {\it The classification of prime-power groups}, J. Reine
Angew. Math., {\bf 182}, 130-141 (1940).

\bibitem{Isaacs1970}
I. M. Isaacs, {\it Groups with many equal classes}, Duke Math.
J., {\bf 37}(3),  501-506 (1970).

\bibitem{Ishikawa1999}
K. Ishikawa, {\it Finite $p$-groups up to isoclinism, which have only two
conjugacy lengths}, J. Algebra, {\bf 220}(1), 333-345 (1999).

\bibitem{Ishikawa2002}
K. Ishikawa, {\it On finite $p$-groups which have only two conjugacy
lengths}, Israel J. Math., {\bf129}(1), 119-123 (2002).

\bibitem{Ito53}
N. Ito, {\it On finite groups with given conjugate types I},
Nayoga Math. J., {\bf 6}, 17-28 (1953).

\bibitem{Ito72}
N. Ito, {\it Simple groups of conjugate rank $4$}, J. Algebra,
{\bf 20}, 226-249 (1972).

\bibitem{Mann-Scoppola}
A. Mann and C. M. Scoppola, {\it On $p$-groups of Frobenius type},
Arch. Math. (Basel), {\bf 56}(4), 320-332 (1991).

\bibitem{NY16}
T. K. Naik and M. K. Yadav,
{\it Finite $p$-groups of conjugate type $\{ 1, p^3\}$}, J. Group Theory, 12
pages (Accepted for publication). https://arxiv.org/pdf/1707.09746.pdf

\bibitem{PS1999}
G. Parmeggiani and B. Stellmacher, {\it $p$-groups of small breadth},
J. Algebra, {\bf 213}(1), 52-68 (1999).


\bibitem{Verardi1987}
L.~Verardi, {\it Gruppi semiextraspeciali di  esponente $p$},  Ann. Mat. Pura
Appl. (4) {\bf 148}, 131-171 (1987).

\end{thebibliography}
\end{document}